\documentclass[12pt]{amsart}

\usepackage{amsmath,amssymb,amscd,amsthm,indentfirst}
\usepackage{amsfonts}
\usepackage{enumitem}
\usepackage[T1]{fontenc}
\usepackage{hyperref}
\usepackage[all]{xy}
\usepackage{stmaryrd}

\usepackage{xcolor}
\usepackage{xfrac}
\usepackage{graphicx}
\usepackage{color}

\usepackage[colorinlistoftodos,prependcaption]{todonotes}

\usepackage{wasysym}
\usepackage{tikz}

\usepackage{ifpdf,ifdraft}
\ifdraft{

\newcommand{\assafsays}[1]{\marginpar{\textbf{A:} #1}}
}
{
\newcommand{\assafsays}[1]{}
}

\newtheorem{prop}{Proposition}[section]
\newtheorem{theorem}[prop]{Theorem}
\newtheorem{cor}[prop]{Corollary}

\newtheorem{lemma}[prop]{Lemma}
\newtheorem*{claim*}{Claim}

\theoremstyle{definition}
\newtheorem{defn}[prop]{Definition}
\newtheorem{example}[prop]{Example}
\newtheorem{examplex}{Example}
\newtheorem{void}[prop]{}
\newtheorem{notation}[prop]{Notation}

\newtheorem{remark}[prop]{Remark}

\newtheorem*{assumption*}{Assumption}
\newtheorem*{example*}{Example}


\def\vp{\varphi}




\def\NN{\ensuremath{\mathbb{N}}}

\def\QQ{\ensuremath{\mathbb{Q}}}
\def\RR{\ensuremath{\mathbb{R}}}
\def\ZZ{\ensuremath{\mathbb{Z}}}




\def\End{\operatorname{End}}

\def\id{\operatorname{id}}
\def\incl{\operatorname{incl}}

\def\nsg{\trianglelefteq}

\def\res{\operatorname{res}}

\newcommand{\B}[1]{\mathbf{#1}}
\newcommand{\C}[1]{\mathcal{#1}}

\newcommand{\xto}[1]{\xrightarrow{#1}}
\newcommand{\OP}[1]{\operatorname{#1}}

\newcommand{\trc}[1]{\{ #1 \}}   
\def\omegalim{\omega\operatorname{-}\lim}


\def\can{\text{can}}
\def\Cone{\OP{Cone}}
\def\Pure{\mathfrak{Pure}}
\def\Free{F}

\def\D{\mathcal{D}}
\def\tC{\hat{\mathcal{C}}}
\def\tD{\hat{\D}}
\newcommand{\Myfloor}[1]{\lfloor #1 \rfloor}
\newcommand{\myfloor}[1]{\llfloor #1 \rrfloor}
\newcommand{\myinterval}[1]{[ #1 \rangle}



\title[Asymptotic cones]{Directional asymptotic cones of groups equipped with bi-invariant metrics}

\author{Jarek K\k{e}dra}
\author{Assaf Libman}
\address{JK: Institute of Mathematics, University of Aberdeen, Fraser Noble Building, Aberdeen AB24 3UE, U.K. and
University of Szczecin}
\email{kedra@abdn.ac.uk}
\address{AL: Institute of Mathematics, University of Aberdeen, Fraser Noble Building, Aberdeen AB24 3UE, U.K.}
\email{a.libman@abdn.ac.uk}

\begin{document}

\maketitle

\begin{abstract}
Given a bi-invariant metric on a group, we construct a version of an asymptotic
cone without using ultrafilters.  The new construction, called the {\it
directional asymptotic cone}, is a contractible topological group equipped with
a complete bi-invariant metric and admits a canonical Lipschitz homomorphism to
the standard asymptotic cone. Moreover, the directional asymptotic cone of a
countable group is separable.
\end{abstract}

\setcounter{tocdepth}{1}
\tableofcontents


\section{Introduction}

\subsection{Historical context}
In 1984 van den Dries and Wilkie \cite{MR751150} published an alternative proof
of Gromov's celebrated theorem on polynomial growth. Their proof uses methods
of non-standard analysis and provides the first construction that uses
ultrafilters of what we know now as the asymptotic cone of a group equipped
with the word metric. The construction generalises to all metric spaces and was
popularised by Gromov in \cite{MR1253544}, where he provided computations for a
substantial family of examples and posed many inspiring questions.  Asymptotic
cones became a well established tool of metric geometry and geometric group
theory \cite{MR3753580}. It is an unpleasant feature, however, that the asymptotic cone of a
space depends on the choice of a non-principal ultrafilter $\omega$ and a scaling sequence $\B s=(s_n)$, and it is often an unapproachable space.

Asymptotic cones were initially constructed to study finitely generated groups $G$ equipped with the word metric with respect to a set of generators.
In recent years, there has been growing interest in word metrics on $G$ that are both left and right invariant, see Paragraph~\ref{par:conjugation-invariant norms} below.
In this case, the asymptotic cone $\Cone_\omega(G,\B s)$ is a group equipped with a bi-invariant metric, rendering it a topological group. 
This paper grew out of the interest in these groups which, a-priori, may depend on both the choice of the ultrafilter $\omega$ and the scaling sequence $\B s$.
Our ambition was to come up with an analogous construction of the asymptotic cone which does not require a choice of an ultrafilter.

\subsection{Asymptotic cones}
Intuitively, the asymptotic cone of a metric group~$(G,d)$, or more generally of a metric space, is a construction that aims to capture ``its image from far away''.
For example, the asymptotic cone of a bounded group (or space) is a point, that of $\ZZ$ is  $\RR$ and that of $\ZZ^n$ is $\RR^n$ (both with the $L^1$-metric).

The construction of the asymptotic cone of a metric space $(X,d)$ requires to choose a {\em non-principal ultrafilter} $\omega$ on $\NN$ and a {\em scaling sequence} $\B s=(s_n)$, i.e., a sequence $s_n>0$ with $\lim_n s_n = \infty$.
Informally, $\B s$ tells us ``how fast'' we move away from $X$ by scaling the metric on $X$ to $\tfrac{1}{s_n} \cdot d$, divide distances by $s_n$ for each $n \geq 1$.
We then model the elements of the image of $X$ from infinity as sequences $(x_n)_{n \geq 1}$ of elements in $(X, \tfrac{1}{s_n}d)$ whose distance from some fixed basepoint $y \in X$ is bounded, i.e., $\limsup_n \tfrac{d(x_n,y)}{s_n} < \infty$.
We now need to define a suitable metric on the set of these sequences.
This is the role of the ultrafilter $\omega$.
Recall that an ultrafilter $\omega$ allows us to choose a unique convergent subsequence in any bounded sequence $(r_n)$ of real numbers, denoted $\omegalim r_n$.
Given two such sequences $(x_n)$ and~$(x_n')$ we define $\delta((x_n), (x_n')) = \omegalim_n \tfrac{d(x_n,x_n')}{s_n}$.
This is only a pseudo-metric.
The asymptotic cone $\Cone_\omega(X,\B s)$ is the metric space which is the set of equivalence classes of sequences at pseudo-distance $0$ with the metric $d_\omega$ induced from $\delta$.
The equivalence classes are denoted $\{ x_n \}$.

The construction  $\Cone_\omega(G,\B s)$ depends heavily on the choice of the ultrafilter.
Indeed, Drutu and Sapir \cite{MR2153979} constructed a finitely generated group $G$ which has infinitely many non-homeomorphic asymptotic cones depending on the choice of $\omega$.
See also \cite[page 189]{MR3753580} for a more detailed context. 

Even though $G$ is a group, in general $\Cone_\omega(G,\B s)$ has no natural group structure.
If the metric on $G$ is invariant to both left and right translations then $\Cone_\omega(G,\B s)$ is endowed with a natural group structure $\{g_n\} \cdot \{g_n'\} = \{ g_n g_n'\}$.
This fact was first observed by Calegari and Zhuang \cite{MR2866929}.
A Lipschitz homomorphism $\varphi \colon G \to G'$ gives rise to a Lipschitz homomorphism $\Cone_\omega(G,\B s) \xto{ \{g_n\}_n \mapsto \{ \varphi(g_n)\}_n } \Cone_\omega(G',\B s)$.

Equipping a group $G$ with a bi-invariant metric is equivalent to equipping it with a {\em conjugation-invariant norm}, see paragraph \ref{par:conjugation-invariant norms} below.
Any $A \subseteq G$ which normally generates $G$ gives rise to a conjugation-invariant {\em word} norm where for any $g \in G$ the value of $\| g\|_A$ is the minimum length of a word in the conjugates of the elements of $A$ and their inverses needed to express $g$.
See Definition \ref{defn:standard word norm}.

{\em In this paper, we will only be interested in groups equipped with conjugation-invariant norms} and their asymptotic cones.
At present we do not know if for such groups the asymptotic cone does or does not depend on the choice of the ultrafilter.
Note that the word metric on Drutu and Sapir's group $G$ we mentioned above is not bi-invariant.

\subsection{Directional asymptotic cones}
Let us dwell on the isometry $\iota \colon \Cone_\omega(\ZZ,\B s) \to
\RR$ for a non-principal ultrafilter $\omega$ and the scaling
sequence~$s_n=2^n$.  It sends the equivalence class $\{ z_n\} \in
\Cone_\omega(\ZZ,\B s)$ to $\omegalim_n \tfrac{z_n}{2^n} \in \RR$.
Let $r\in \RR$ be a positive real number expressed as
\begin{equation}
r = k_0 + \sum_{i=1}^{\infty} \frac{k_i}{2^i},
\label{Eq:cauchy-r}
\end{equation}
where $k_0\in \B N$ and $k_i \in \{-1,0,1\}$ for $i=1,2,\ldots$.
Let $w_n = \sum_{i=0}^{n} 2^{n-i}k_i$.
Then the sequence $(w_n/2^n)$ is Cauchy and $\lim_{n \to \infty} \tfrac{w_n}{2^n} = r$.
So the isomorphism $\Cone_\omega(\ZZ,\B s) \xto{\iota} \RR$ has the effect  $\iota(\{w_n\})=r$ regardless of the choice of $\omega$.
On the other hand, look at the sequence $z_n := (-1)^n w_n$.
Clearly 
$(z_n)$ defines an element in $\Cone_\omega(\ZZ,\B s)$ for any $\omega$.
However, $\iota(\{z_n\})=r$ or $\iota(\{z_n\})=-r$ depending on the choice of $\omega$, since $\pm r$ are the accumulation points of $\left\{ \tfrac{z_n}{2^n}\right\}$.
The difference between the two sequences is clear.
The first, $(w_n)$,  has a well defined ``direction'' in $\ZZ$,
due to $(w_n/2^n)$ being Cauchy,
and its ``role'' in the asymptotic cone does not depend on $\omega$.
The second, $(z_n)$, lacks ``direction'', its ``role'' in the asymptotic cone is ambiguous, a deficiency that is circumvented by the ultrafilter $\omega$.

Informally, the aspiration of this paper is to isolate from $\Cone_\omega(G,\B
s)$ those sequences that have a well defined ``direction'' independent of the
ultrafilter.  The result is the  {\em directional asymptotic cone} of a group
$G$, denoted $\tC(G)$.  In the example $\Cone_\omega(\ZZ,\B s) \cong \RR$
above, the sequences with no well defined direction did not add any new
elements to the asymptotic cone.  But this is not the case in general, as we
show in Example \ref{example:intro F2 is big} below.  In general, there are a
lot of ``non-directional elements'' in $\Cone_\omega(G,\B s)$.
We emphasise that, unlike the  asymptotic cone which is defined for any metric space, the directional asymptotic cone is specific to groups with bi-invariant metrics
and the group structure is essential.

Let us give an intuitive discussion of the construction.
We are looking to generalise the concept of a Cauchy sequence to sequences $(g_n)$ representing $\{g_n\} \in \Cone_\omega(G,\B s)$ for arbitrary groups $G$ equipped with a conjugation-invariant norm $\| \ \|$.
Our intuition is based on the following analogy. Suppose $G$ is a 
Riemannian manifold and consider the exponential map ${\rm exp}\colon T_1G\to G$.
The paths $t\mapsto {\rm exp}(tv)$ for some $v\in T_1G$ are paths in $G$
which have {\it direction}. We make the analogy precise as follows.

Let $(G,\|\ \|)$ be a group equipped with a conjugation-invariant norm.
Let $F_{\frac{1}{2^n}\B Z}(G)\cong F(G)$ be the free group generated by the set
$G\setminus\{1\}$ and equipped with the maximal conjugation-invariant norm $\|\ \|_{n}$ for
which $\|g\|_{n} = \frac{1}{2^n}\|g\|$. It is useful to think about the Cayley
graph of $F(G)$ in which an edge corresponding to a conjugate of $g$ (conjugation in $F(G)$) has length
$\frac{1}{2^n}\|g\|$.  For every $n\in \B N$ there is an isometric embedding 
$\psi_2\colon F_{\frac{1}{2^n}\B Z}(G) \to F_{\frac{1}{2^{n+1}}\B Z}(G)$ given
by 
$$
\psi_2(g) = g^2\qquad\text{(the square of $g$ in $F(G)$, not in $G$)}.
$$
It induces the sequence of isometric embeddings
$$
F(G) \stackrel{\psi_2}\longrightarrow F_{\frac{1}{2}\B Z}(G)
\stackrel{\psi_2}\longrightarrow F_{\frac{1}{4}\B Z}(G)
\stackrel{\psi_2}\longrightarrow F_{\frac{1}{8}\B Z}(G)\longrightarrow\cdots
$$
whose colimit is denoted by $F_{\B Z[1/2]}(G)$. It is a free product of the
abelian groups $\B Z[1/2]$ generated by the set $G\setminus\{1\}$ and is equipped with the
natural bi-invariant word metric.  A sequence $\gamma_n \in
\Free_{\tfrac{1}{2^{n-1}}\ZZ}(G) \cong \Free(G)$ is called an {\em asymptotic
direction} if its image in $\Free_{\ZZ[1/2]}(G)$ is a Cauchy sequence.
The image of such sequence $(\gamma_n)$ under the canonical homomorphism 
$\pi\colon F(G) \to G$ which sends every generator $g \in F(G)$ to $g \in G$
(this is where the group structure on $G$ is essential!) gives a sequence of
elements $g_n=\pi(\gamma_n) \in G$,
which represents an element in $\Cone_\omega(G,\B s)$, where $s_n=2^{n-1}$.  We
call such elements {\em Cauchy sequences in $G$} relative to the scaling
sequence $s_n=2^{n-1}$.  
Such Cauchy sequences $(g_n)$ comprise the {\em directional
asymptotic cone} of $G$, which we denote by $\tC(G)$.

To illustrate the above idea consider the example $\Cone_{\omega}(\ZZ,\B s)$
and \eqref{Eq:cauchy-r}. Define $\gamma_n\in F_{\frac{1}{2^n}\ZZ}(\ZZ)$ recursively
by
$$
\gamma_0=k_0\quad \text{ and } \quad \gamma_n=\psi_2(\gamma_{n-1})*k_n,
$$
where each $k_i\in \ZZ$ is a generator of $F_{\frac{1}{2^i}\ZZ}(\ZZ)$.
The sequence $(\gamma_n)$ is Cauchy in $F_{\ZZ[1/2]}(\ZZ)$, because
$\|\gamma_{n+1}\gamma_n^{-1}\|=\|k_{n+1}\|\leq \frac{1}{2^{n+1}}$, where
the norm is in $F_{\ZZ[1/2]}(\ZZ)$. Moreover, $\pi(\gamma_n) = w_n$, which
is a sequence with a ``direction''.

One clear advantage of the construction is that it does not require the choice of an ultrafilter.
However, what about scaling sequences other than $s_n=2^{n-1}$?
In practice, the construction of the directional asymptotic cone depends on the choice of an unbounded subset $T$ of the interval $(0,\infty)$.
We call $T$ a {\em scaling set} and think of it as the home of all scaling sequences $s_n \in T$.
The directional asymptotic cone with respect to a scaling set $T$, denoted $\tC(G,T)$, is defined as a certain quotient of the group $\prod_{t \in T} G$ with a suitable metric.
The details of the construction of the directional asymptotic cone are in Sections \ref{sec:FSA}-- \ref{Sec:diecrional asymptotic cone}.

Among all scaling sets, the maximal one $(0,\infty)$ is a natural choice and we write $\tC(G)$ for $\tC(G,(0,\infty))$.
We say that $G$ is {\em independent of scaling} if all the groups $\tC(G,T)$ are isometric,  see Definition \ref{def:scaling independence}.
In this case $\tC(G)$ has particularly nice properties in relation to the ultrafilter asymptotic cones. 
Examples of groups that are independent of scaling include all nilpotent groups (Proposition \ref{prop:cone of general nilpotent groups}) and all free groups equipped with the conjugation-invariant word metric (Theorem \ref{theorem:F(A) independent of scaling}).
We do not know of any group that is not independent of scaling.

\subsection*{Relation to the ultrafilter cone}
The following theorem combines Proposition \ref{P:directional into asymptotic}, Theorem \ref{T:equivalence scaling independence and injective rho} and Theorem \ref{T:abelian independent of scaling}.

\begin{theorem}
Let $G$ be  equipped with a conjugation-invariant norm.
\begin{enumerate}[label=(\alph*)]
\item
For any non-principal ultrafilter $\omega$ and scaling sequence $\B s$ there is a natural homomorphism 
\[
\rho_{\omega,\B s} \colon \tC(G) \to \Cone_\omega(G,\B s)
\]
which is Lipschitz with constant $1$. 

\item
 $\rho_{\omega,\B s}$ is injective if and only if $G$ is independent of scaling.
\end{enumerate}
\end{theorem}

\begin{example}
In Proposition \ref{prop:cone of general nilpotent groups} and Theorem \ref{T:abelian independent of scaling} we show that if $G$ is nilpotent, or more generally if the commutator subgroup of any finitely generated subgroup of $G$ is bounded, then $G$ is independent of scaling and that, in fact, $\rho_{\omega,\B s} \colon \tC(G) \to \Cone_\omega(G,\B s)$ is an isometric embedding.
In Proposition \ref{prop:tCZn} we show that if $G=\ZZ^n$ with the $L^1$-norm then it is an isometry.
\end{example}

\begin{example}
Let ${\Free}_n$ be the free group on $n$ generators equipped with the bi-invariant word metric associated with the standard generating set.
Let $\Theta \subseteq {\Free}_n$ be a subset consisting of representatives of conjugacy classes of elements that are not proper powers. 
Let $\Free_\RR(\Theta)$ denote the free product $\underset{\theta \in \Theta}{*} \RR$.
In Section \ref{S:coneFn} we show that $\Free_n$ is independent of scaling and that there is a homomorphism $\Free_\RR(\Theta) \to \tC(\Free_n)$ which is injective and has dense image. In particular, the directional cone $\tC(\Free_n)$ embeds
into every ultrafilter cone $\Cone_{\omega}(\Free_n,\B s)$.
\end{example}

\subsection{Algebraic and metric properties}

\subsubsection*{Functoriality}
In Propositions \ref{P:naturality C and tC} and \ref{P:tC is complete} we prove that $\tC(-)$ is a functor from the category of groups with bi-invariant metrics and Lipschitz homomorphisms to the category of complete groups with bi-invariant metrics and Lipschitz homomorphisms.

\subsubsection*{Metric properties.}
The directional asymptotic cone shares metric properties with the ultrafilter cone.
We prove the following result in Proposition \ref{P:tC is complete}) and Corollary \ref{C:tC length space}.

\begin{theorem}
Let $G$ be equipped with a conjugation-invariant norm and $T \subseteq (0,\infty)$ be a scaling set.
\begin{enumerate}[label=(\alph*)]
\item
$\tC(G,T)$ is a complete metric space.

\item
$\tC(G,T)$ is a length space.
\end{enumerate}
\end{theorem}

Unlike the ultrafilter cones, which tend to be wild metric spaces with very complicated fundamental groups, the directional asymptotic cone is quite tame.

\begin{theorem}
Let $G$ be equipped with a conjugation-invariant norm.
\begin{enumerate}[label=(\alph*)]
\item
If $G$ is countable then $\tC(G,T)$ is separable i.e., it is a Polish group for any scaling set $T$. See Proposition \ref{P:polish cone}.

\item
$\tC(G)$ is contractible.
The contraction is through a continuous family of Lipschitz homomorphisms $\varphi_t \colon \tC(G) \to \tC(G)$ indexed by $t \in [0,1]$.
See Theorem \ref{T:tC contractible} and Example \ref{example:ample sets}.
\end{enumerate}
\end{theorem}

The next example shows that the directional asymptotic cone is much ``smaller'' than the ultrafilter asymptotic cone.

\begin{example}\label{example:intro F2 is big}
Let $\B F_2$ denote the free group on two generators with the conjugation-invariant word norm.
By the theorem above $\tC(F_2)$ is separable.
In contrast, we claim that any ultrafilter asymptotic cone of $\B F_2$ 
is not a separable metric space.
To see this, recall that by \cite[Theorem 1.D]{MR3426433} any locally compact regular tree $T$ embeds isometrically into the Cayley graph of $\B F_2$ with respect to the bi-invariant word metric.
We will choose such $T$ of even degree $2n \geq 4$.
Clearly, $T$ is the Cayley graph of $\B F_n$.
Thus, we get an isometric embedding $i \colon \B F_n \to \B F_2$, where we note that $\B F_n$ has the ordinary word metric and $\B F_2$ has the bi-invariant word metric.
There results an isometric embedding $\Cone_\omega(\B F_n,\B s) \to \Cone_\omega(\B F_2,\B s)$.
Since $\B F_n$ is a non-elementary hyperbolic group, by \cite[Theorem 3.A.7]{MR1902363} $\Cone_\omega(\B F_n,\B s)$ is isometric to a non-empty real tree in which every point is a branching point of degree $\aleph$.
This space is clearly  not separable, and in particular $\Cone_\omega(\B F_2, \B s)$ cannot be separable 

In contrast to the contractibility of $\tC(G)$, Karlhofer shows in \cite[Proposition 7.2]{2203.10889} that the asymptotic cone of $G = \oplus_n \B Z/n\B Z$ equipped with the word metric associated with generators of the form $(0,\ldots,0,1,0,\ldots)$ is not simply connected.
We don't know of an example of a {\em finitely generated} group equipped with the bi-invariant word metric whose ultrafilter asymptotic cone is not contractible. 
\end{example}

\subsubsection*{Algebraic properties}

Similar to ultrafilter asymptotic cones the directional ones respects products:

\begin{theorem}[See Theorem \ref{T:tC products}]
For any groups $H,G$ with conjugation-invariant norm
$$
\tC(G \times H) \cong \tC(G) \times \tC(H).
$$  
\end{theorem}

Directional asymptotic cones exhibit similar phenomena as ultrafilter ones in connection to extensions.

\begin{theorem}\label{thm:intro extensions}
Let $1 \to N \to G \xto{\pi} H \to 1$ be a short exact sequence of groups with conjugation-invariant norms where $\pi$ is Lipschitz. 
Consider the induced map
\[
\tC(G) \xrightarrow{\tC(\pi)} \tC(H).
\]
\begin{enumerate}[label=(\alph*)]
\item
If $N$ is bounded in $G$ and if there exists a Lipschitz set theoretic section $H \to G$ then  $\tC(\pi)$ is bi-Lipschitz equivalence.
In particular it is an isomorphism of groups.
See Theorem \ref{T:tC quotient with section}.

\item
If the metric on $H$ is the quotient metric induced from $G$ via $\pi$, see Definition \ref{VD:metric quotient}, and if $N$ is bounded in $G$ then $\tC(\pi)$ is an isometry.
See Corollary \ref{cor:metric quotient bounded kernel}.
\end{enumerate}
\end{theorem}

\subsection{Relation to the construction of Calegari and Zhuang} 

Let $F$ denote a free group and let $W \subseteq F$, which we refer to as a set of words.
Let $G$ be a group.
The set of $W$-verbal elements (or simply verbal elements) in $G$ is 
\[
X_W = \bigcup_{\varphi \in \OP{Hom}(F,G)} \varphi(W).
\]
The $W$-verbal subgroup of $G$ is $G_W = \langle X_W \rangle$.
We equip $G_W$ with the standard word norm $\| \ \|_{X_W}$ with respect to the set of generators $X_W$.

In \cite{MR2866929} Calegari and Zhuang study verbal subgroups making use of the asymptotic cones of $G_W$ (with the verbal word norm).
They fix a non-principal ultrafilter $\omega$ and the scaling sequence $\B d=(d_n)$ where $d_n=n$, and denote $\hat{A}_W(G) = \Cone_\omega(G_W,\B d)$, or simply $\hat{A}_W$.
Clearly, $\hat{A}_W$ depends on the choice of the ultrafilter (and a scaling sequence~$\B d$).
Then in \cite[Section 3.4, Definition 3.8]{MR2866929} they define a subgroup $A_W \leq \hat{A}_W$ which they call the {\em real cone} of $G$.
This group consists of all elements of $\Cone_\omega(G_W,\B d)$ of the form 
$\left\{ g_1^{\lfloor t_1n \rfloor} \cdots g_k^{\lfloor t_kn \rfloor} \right\}_n$, 
where $g_i~\in~G_W$ and $t_i \in \RR$ and $\lfloor \ \rfloor$ is the floor function.
It is not clear (to us) to what extent the group $A_W$ depends on the choice of the ultrafilter.

The construction of the real cone is tightly related to our construction of the directional asymptotic cone of $G_W$ as follows.
Let $\Free_\RR(G_W)=\underset{g \in G_W}{*} \RR$ denote the free product, see Definition \ref{def:FSA}.
A key feature of the directional asymptotic cone described in Section \ref{V:hat pi construction} is that it is equipped with a {\em canonical} homomorphism $\eta \colon \Free_\RR(G_W) \to \tC(G_W)$ whose image is dense.
By composing it with the homomorphism 
\[
\rho \colon \tC(G_W) \to \Cone_\omega(G_W,\B d)
\]
(see Proposition \ref{P:directional into asymptotic}) we obtain a homomorphism $\rho' \colon \Free_\RR(G_W) \to \hat{A}_W$.
By easy inspection of $\eta$ and $\rho$,  the real cone $A_W$ is precisely the image of $\rho'$.
Since $\rho$ is Lipschitz and $\tC(G_W)$ is a complete metric space, the closure of $A_W$ in $\hat{A}_W$ is equal to the image of the directional cone $\tC(G_W)$ in $\Cone_\omega(G_W,\B d)$.

Some of the fundamental properties of the real cone proven in \cite{MR2866929} have their analogues for directional cones, and in fact, they follow from them.
For example, in \cite[Lemma 3.9 and 3.10]{MR2866929} it is shown that $A_W$ has an action of $\RR$ and consequently that it is a contractible space.

\subsection*{Acknowledgements}
This work was funded by Leverhulme Trust Research Project Grant RPG-2017-159.
For the purpose of open access, the authors have applied a Creative Commons
Attribution (CC BY) licence to any Author Accepted Manuscript version
arising from this submission.

%
%
%

\section{Conjugation invariant norms}\label{sec:conjugation-invariant norms}

In this section we review and collect results about bi-invariant metric and conjugation-invariant norms.
The proofs of the results we introduce are straightforward and are therefore deferred to Appendix \ref{app:cin proofs}.

\begin{void}\label{par:conjugation-invariant norms}
Let $G$ be a group.
A {\bf norm} on $G$ is a function $\nu \colon G \to [0,\infty)$ that satisfies the following
conditions:
\begin{enumerate}[label=(\arabic*)]
\item\label{par:conjugation-invariant norms:sep}
$\nu(g)=0 \iff g=1$ \hfill(non-degeneracy or point separation);
\item\label{par:conjugation-invariant norms:sym}
$\nu(g)=\nu(g^{-1})$ \hfill (symmetry);
\item\label{par:conjugation-invariant norms:triangle}
$\nu(gh) \leq \nu(g)+\nu(h)$ \hfill (triangle inequality).
\end{enumerate}
By relaxing the condition \ref{par:conjugation-invariant norms:sep} we obtain a {\bf pseudo\-norm} or a {\bf degenerate norm} on $G$.
It gives rise to a (pseudo) metric on $G$ via $d_{\nu}(g,h)=\nu(g^{-1}h)$.
It is left-invariant, i.e., $d_\nu(kg,kh)=d_\nu(g,h)$ for any $g,h,k \in G$.

A (pseudo) norm is called {\bf conjugation-invariant} if in addition, for every $g,h \in G$ it satisfies
\[
\nu(hgh^{-1})=\nu(g).
\]
Conjugation invariant (pseudo) norms are in one-to-one correspondence with bi-invariant (pseudo) metrics on $G$ via $d_{\nu}(g,h)=\nu(g^{-1}h)$.
By {\bf bi-invariant} we mean that for any $g,h,k \in G$ we have $d_\nu(kg,kh)=d_\nu(g,h)=d_\nu(gk,hk)$.
In this case $(G,d_\nu)$ becomes a topological group (but this is not necessarily the case if the norm is not conjugation-invariant).
\end{void}

\begin{void}[Metric spaces associated to pseudometrics]\label{V:pseudometric to metric}
A pseudometric $\tilde{d}$ on  $\tilde{X}$ induces an equivalence relation $\tilde{x} \sim \tilde{y} \iff \tilde{d}(\tilde{x},\tilde{y})=0$ and a metric space $(X,d)$ on the set of equivalence classes where $d([\tilde{x}],[\tilde{y}])=\tilde{d}(\tilde{x},\tilde{y})$.

Let $\nu$ be a conjugation-invariant pseudonorm on $G$ and let
\[
G_0 = \{ g \in G : \nu(g)=0\}.
\]
Then $G_0 \nsg G$  and $\|gG_0\|:=\nu(g)$ defines a conjugation-invariant norm on $G/G_0$. 
The metric associated to this norm is the metric associated to the pseudometric $d_{\nu}$.
\end{void}

The next result is a simple application of the definitions and will be used repeatedly.

\begin{lemma}
\label{L:conj invariant consequences}
Let $\| \ \|$ be a conjugation-invariant pseudonorm on $G$.
Then for any $g_1,\dots,g_n$ and $h_1,\dots,h_n$ in $G$
\[
\| (g_1\dots g_n)^{-1} (h_1\dots h_n)\| \leq \sum_{i=1}^n \|g_i^{-1} h_i\|.
\]
\end{lemma}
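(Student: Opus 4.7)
My plan is to prove the inequality by induction on $n$, using conjugation invariance to absorb the ``outer'' element at each step.

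The base case $n=1$ is trivial. For the inductive step, assume the inequality for $n$ and set $x = (g_1\cdots g_n)^{-1}(h_1\cdots h_n)$. A routine rearrangement gives
\[
(g_1\cdots g_{n+1})^{-1}(h_1\cdots h_{n+1}) = g_{n+1}^{-1}\, x\, h_{n+1}.
\]
The key algebraic manipulation is to split this product as
\[
g_{n+1}^{-1}\, x\, h_{n+1} = \bigl(g_{n+1}^{-1} x\, g_{n+1}\bigr)\bigl(g_{n+1}^{-1} h_{n+1}\bigr),
\]
so that the first factor is a conjugate of $x$. Applying the triangle inequality, then conjugation invariance, then the inductive hypothesis yields
\[
\|g_{n+1}^{-1} x\, h_{n+1}\| \leq \|x\| + \|g_{n+1}^{-1} h_{n+1}\| \leq \sum_{i=1}^{n+1} \|g_i^{-1} h_i\|.
\]

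The argument is completely mechanical once one spots the splitting above, so I do not expect any obstacle; the only nontrivial input is the use of conjugation invariance, without which the inequality is false in general.
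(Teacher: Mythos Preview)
Your proof is correct and essentially identical to the paper's: both use induction on $n$, peel off the last factors, and apply the triangle inequality together with conjugation invariance. The only cosmetic difference is that the paper writes the splitting as $(g_n^{-1}h_n)(h_n^{-1}g'^{-1}h'h_n)$, conjugating the inductive piece by $h_n$ rather than by $g_{n+1}$ as you do.
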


\begin{proof}
See Appendix \ref{app:cin proofs}.
\end{proof}

\begin{void}
Let $f \colon (G,\mu) \to (H,\nu)$ be a homomorphism of groups equipped with pseudonorms, whence pseudometrics.
We leave it to the reader to verify that $f$ is a Lipschitz function if and only if there exists $C \geq 0$ such that $\nu(f(g)) \leq C \mu(g)$ for all $g \in G$.
\end{void}

\begin{defn}\label{VD:metric quotient}
Let  $\pi \colon G \to H$ be a surjective homomorphism of groups equipped with conjugation-invariant pseudonorms $\| \ \|_G$ and $\| \ \|_H$. 
We call $\pi$ a {\bf metric quotient} if for every $h \in H$
\begin{equation}\label{eqn:metric quotient condition}
\| h\|_H = \inf \left\{ \|g\|_G : g \in \pi^{-1}(h)\right\}.
\end{equation}
\end{defn}

Clearly, a metric quotient $\pi \colon G \to H$ is Lipschitz with constant $1$.


\begin{lemma}\label{L:quotient pseudonorm}
Let $\pi \colon G \to H$ be a surjective homomorphism and let $\| \ \|_G$ be a pseudonorm on $G$.
Define a function $\| \ \|_H \colon H \to [0,\infty)$ 
by means of \eqref{eqn:metric quotient condition}.
Then $\| \ \|_H$ is a pseudonorm on $H$.
It is conjugation-invariant if $\| \ \|_G$ is, in which case it is a metric quotient, hence Lipschitz with constant $1$.
\end{lemma}

\begin{proof}
See Appendix \ref{app:cin proofs}.
\end{proof}

\medskip
\noindent
{\bf Remark:}
Lemma \ref{L:quotient pseudonorm} is valid for groups with pseudonorms but fails for general pseudometric spaces.

\begin{prop}\label{P:quotient complete}
Let $\pi \colon G \to H$ be a metric quotient of groups equipped with conjugation-invariant norms $\| \ \|_G$ and $\| \ \|_H$ respectively.
If $G$ is a complete metric group then so is $H$.
\end{prop}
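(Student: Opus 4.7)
The plan is to show that every Cauchy sequence in $H$ converges, by lifting a rapidly Cauchy subsequence to a Cauchy sequence in $G$, exploiting the infimum definition of the quotient pseudonorm.

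Let $(h_n)$ be a Cauchy sequence in $H$. First I would pass to a subsequence $(h_{n_k})_{k\ge 1}$ satisfying $\|h_{n_k}^{-1}h_{n_{k+1}}\|_H < 2^{-k}$, which is possible by the Cauchy condition. By the definition of the metric quotient norm, for each $k$ there exists $f_k \in \pi^{-1}(h_{n_k}^{-1}h_{n_{k+1}})$ with $\|f_k\| < 2^{-k}$. Pick any $\tilde g_1 \in \pi^{-1}(h_{n_1})$ and inductively set $\tilde g_{k+1} := \tilde g_k f_k$. Then $\pi(\tilde g_k) = h_{n_k}$ for all $k$, and for $j>k$ we have $\tilde g_k^{-1}\tilde g_j = f_k f_{k+1}\cdots f_{j-1}$, so by the triangle inequality
\[
\|\tilde g_k^{-1}\tilde g_j\| \le \sum_{i=k}^{j-1}\|f_i\| < 2^{1-k}.
\]
Therefore $(\tilde g_k)$ is Cauchy in $G$.

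Since $G$ is complete, $\tilde g_k \to g$ for some $g\in G$. Lemma~\ref{L:quotient pseudonorm} says that $\pi$ is Lipschitz with constant $1$, hence continuous, so $h_{n_k} = \pi(\tilde g_k) \to \pi(g)$ in $H$. Finally, a standard argument shows that a Cauchy sequence with a convergent subsequence converges to the same limit: since $(h_n)$ is Cauchy and the subsequence $(h_{n_k})$ converges to $\pi(g)$, the whole sequence $(h_n)$ converges to $\pi(g)$. Thus $H$ is complete.

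The main (but mild) obstacle is the lifting step, which requires using the infimum characterization of $\|\cdot\|_H$ to choose the $f_k$ with controlled norm; this is precisely the content of the hypothesis that $\pi$ is a metric quotient, and without it one only knows that representatives exist but not that they can be chosen with summable norms. Everything else is formal: the triangle inequality in $G$ telescopes the Cauchy estimate, completeness of $G$ produces a limit, and the Lipschitz property of $\pi$ transports this limit to $H$.
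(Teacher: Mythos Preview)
Your proof is correct and follows essentially the same approach as the paper's: pass to a rapidly Cauchy subsequence, use the infimum definition of the quotient norm to lift the successive differences $h_{n_k}^{-1}h_{n_{k+1}}$ to elements of $G$ with summable norms, telescope to get a Cauchy sequence of lifts, and push the limit down via the Lipschitz property of $\pi$. The paper's argument is identical up to notation (it relabels the subsequence and calls your $f_k$'s $\Delta_k$).
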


\begin{proof}
See Appendix \ref{app:cin proofs}.
\end{proof}

\begin{prop}\label{P:quotient Lipschitz maps}
Consider the following commutative diagram of homomorphisms between groups equipped with conjugation-invariant norms.
\[
\xymatrix{
G \ar[r]^\varphi \ar@{->>}[d]_{\pi_G} &
H \ar@{->>}[d]^{\pi_H}
\\
G' \ar[r]^{\varphi'} & 
H'
}
\]
Suppose that the vertical arrows are metric quotients. 
If $\varphi$ is Lipschitz then $\varphi'$ is Lipschitz with the same constant.
\end{prop}

\begin{proof}
See Appendix \ref{app:cin proofs}.
\end{proof}

The next result is left to the reader.

\begin{prop}\label{P:metric quotient 2 out of 3}
Let $f \colon G \to G'$ and $g \colon G' \to G''$ be homomorphisms between groups equipped with conjugation-invariant pseudonorms.
If $f$ and $g$ are metric quotients then so is $g \circ f$.
If $f$ and $g \circ f$ are metric quotients then so is $g$.
\end{prop}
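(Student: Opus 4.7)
The plan is to unfold the definition of a metric quotient and chase the infimum through the preimages. Both implications rest on the set-theoretic identity
\[
(g\circ f)^{-1}(g'')=\bigcup_{y\in g^{-1}(g'')} f^{-1}(y),
\]
which lets an infimum over $(g\circ f)^{-1}(g'')$ factor as an iterated infimum. Surjectivity is handled trivially: it is preserved by composition, and if $g \circ f$ is surjective then so is $g$.

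For the first implication, assume $f$ and $g$ are metric quotients and fix $g''\in G''$. Using the hypothesis on $g$ and then on $f$,
\[
\|g''\|_{G''}=\inf_{y\in g^{-1}(g'')}\|y\|_{G'}=\inf_{y\in g^{-1}(g'')}\inf_{x\in f^{-1}(y)}\|x\|_G=\inf_{x\in(g\circ f)^{-1}(g'')}\|x\|_G,
\]
the last equality being the displayed union decomposition. This is exactly the metric quotient property for $g\circ f$.

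For the second implication, assume $f$ and $g\circ f$ are metric quotients; I verify the two inequalities separately. For the inequality $\inf_{y\in g^{-1}(g'')}\|y\|_{G'}\le \|g''\|_{G''}$, given $\epsilon>0$ pick, using that $g\circ f$ is a metric quotient, some $x\in(g\circ f)^{-1}(g'')$ with $\|x\|_G<\|g''\|_{G''}+\epsilon$ and set $y=f(x)\in g^{-1}(g'')$; since $f$ is a metric quotient, Lemma \ref{L:quotient pseudonorm} tells us $f$ is Lipschitz with constant $1$, so $\|y\|_{G'}\le\|x\|_G<\|g''\|_{G''}+\epsilon$. For the reverse inequality, fix $y\in g^{-1}(g'')$; every $x\in f^{-1}(y)$ satisfies $(g\circ f)(x)=g''$, so the metric quotient property of $g\circ f$ gives $\|g''\|_{G''}\le\|x\|_G$. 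Taking infimum over $x\in f^{-1}(y)$ and invoking that $f$ is a metric quotient yields $\|g''\|_{G''}\le\|y\|_{G'}$; take infimum over $y$ to conclude.

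There is no real obstacle: the argument is pure bookkeeping with infima and the Lipschitz-with-constant-$1$ property of a metric quotient from Lemma \ref{L:quotient pseudonorm}. The only point worth double-checking is that in the second implication one genuinely needs the Lipschitz bound $\|f(x)\|_{G'}\le\|x\|_G$ rather than merely the metric quotient property of $f$, since we are passing from an arbitrary $x$ to $y=f(x)$ without control over whether $x$ minimises over $f^{-1}(y)$.
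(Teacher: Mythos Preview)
The paper leaves this proposition to the reader, so there is no proof to compare against; your argument is correct and is precisely the routine infimum-chasing verification the authors have in mind. Your closing remark about needing the Lipschitz-with-constant-$1$ bound (rather than only the infimum formula) in the second implication is a nice observation, though in fact the two are equivalent here since $\|f(x)\|_{G'}=\inf_{x'\in f^{-1}(f(x))}\|x'\|_G\le\|x\|_G$ follows directly from the metric quotient property of $f$.
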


Next we discuss an important collection of conjugation-invariant norms, a
slight generalisation of the standard conjugation-invariant {\em word norms}.
In the case of a free group, such a norm was investigated by Nie
\cite{1412.0101} under the name {\it weighted cancellation length}. They are
essential for the construction of the directional cone.

\begin{void}\label{V:word norms}
By a {\bf length function} on a set $A$ we mean a function 
\[
\mu \colon A \to [0,\infty).
\]
Let $G$ be a group and $A \subseteq G$ a normal generating set. 
That is, $G$ is the smallest normal subgroup containing $A$.
Let $\mu \colon A \to [0,\infty)$ be a length function.
The {\bf conjugation-invariant  word norm} associated to $\mu$ is defined by 
\begin{equation}
\label{eqn:def norm mu}
\| g \|_{G;\mu} =
\inf \left\{ \sum_{i=1}^n \mu(a_i) \ : \ g=(g_1 a_1^{\pm 1} g_1^{-1})\cdots (g_n a_n^{\pm 1} g_n^{-1}), \ a_i \in A, g_i \in G\right\}.
\end{equation}

The terminology is a somewhat misleading since in general $\| \ \|_{G;\mu}$ is only a {\em pseudonorm}.
It is straightforward to check that $g \mapsto \|g\|_{G;\mu}$ is  a conjugation-invariant.
The details are left to the reader.

Notice that  $\|a\|_{G;\mu} \leq \mu(a)$ for all $a \in A$ and that in general strict inequality may hold.
Also, even if $\mu(a) \neq 0$ for all $a \in A$, the resulting $\| \ \|_{G;\mu}$ may be degenerate.
For example, $A=\{ \tfrac{1}{n} \}$ (normally) generates $\QQ$ and $\mu(\tfrac{1}{n})=\tfrac{1}{n^2}$ yields a vanishing pseudonorm on $\QQ$ since $\| \tfrac{1}{n} \|_{\QQ;\mu} = \lim_k k \mu(\tfrac{1}{nk})=0$.
\end{void}

An important special case is the ``standard'' conjugation-invariant norm:

\begin{defn}\label{defn:standard word norm}
Suppose that $G$ is normally generated by $A$.
The conjugation-invariant norm defined in  \ref{V:word norms} with respect to the constant length function $1 \colon A \to [0,\infty)$ is the {\em standard conjugation-invariant word norm}.
It will be denoted $\| \ \|_{G;A}$.
\end{defn}

Thus, $\|g\|_{G;A}$ is the {\em minimum} integer $n \geq 0$ such that $g$ can be written as a product of conjugates of elements of $A$ and their inverses.
It is easy to check that this is a norm rather than a pseudo norm.

\begin{example}\label{example:standard Zn L1}
The standard basis $E=\{e_1,\dots,e_n\}$ normally generates $\ZZ^n$ and the standard word norm $\| \ \|_{\ZZ^n;E}$ coincides with the $L^1$-norm $\| (x_1,\dots,x_n)\|_1=\sum |x_i|$.
\end{example}

\medskip
\noindent
{\bf Important remark about notation:}
In order to improve readability, whenever $G$ is understood from the context we will take the liberty to write $\| \ \|_\mu$ instead of $\| \ \|_{G;\mu}$ and $\| \ \|_A$ instead of $\| \ \|_{G;A}$ whenever this does not cause confusion.

\begin{prop}\label{P:norm-mu universally Lipschitz}
The norm $\| \ \|_{G;\mu}$ on $G$ defined in \eqref{eqn:def norm mu} has the property that for any homomorphism $f \colon G \to H$ where $H$ is equipped with a conjugation-invariant pseudonorm $\| \ \|_H$, if $\|f(a)\|_H \leq C \cdot \mu(a)$ for all $a \in A$ then $f$ is Lipschitz with constant $C$.
\end{prop}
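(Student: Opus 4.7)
The plan is to show directly that $\|f(g)\|_H \le C\|g\|_\mu$ for every $g\in G$, from which the Lipschitz bound on distances follows immediately because both norms are conjugation-invariant and bi-invariant.

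First I would fix $g\in G$ and an arbitrary expression realising (approximately) the infimum defining $\|g\|_\mu$, namely
\[
g=(g_1 a_1^{\varepsilon_1} g_1^{-1})\cdots (g_n a_n^{\varepsilon_n} g_n^{-1})
\]
with $a_i\in A$, $g_i\in G$ and $\varepsilon_i\in\{\pm 1\}$. Applying the homomorphism $f$ and using that $\|\cdot\|_H$ is conjugation-invariant, symmetric and satisfies the triangle inequality, I get
\[
\|f(g)\|_H\le \sum_{i=1}^n\bigl\|f(g_i)f(a_i)^{\varepsilon_i}f(g_i)^{-1}\bigr\|_H =\sum_{i=1}^n\|f(a_i)\|_H\le C\sum_{i=1}^n\mu(a_i).
\]
Taking the infimum over all such expressions on the right gives $\|f(g)\|_H\le C\|g\|_\mu$.

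To deduce the Lipschitz estimate on the induced metrics, for any $x,y\in G$ apply this to $g=x^{-1}y$ and use the fact that $f$ is a homomorphism together with the bi-invariance of the two pseudometrics:
\[
d_H(f(x),f(y))=\|f(x)^{-1}f(y)\|_H=\|f(x^{-1}y)\|_H\le C\|x^{-1}y\|_\mu=C\, d_\mu(x,y).
\]
There is no real obstacle here; the only care required is to verify that the argument is legitimate even when $\|g\|_\mu=0$, in which case the infimum still forces $\|f(g)\|_H=0$ (by choosing representations with arbitrarily small total $\mu$-cost), so the inequality $\|f(g)\|_H\le C\|g\|_\mu$ remains correct in the pseudonorm setting.
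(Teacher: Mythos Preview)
Your proof is correct and follows essentially the same approach as the paper: bound $\|f(g)\|_H$ by $C\sum_i\mu(a_i)$ for an arbitrary presentation of $g$ as a product of conjugates of generators, then take the infimum. Your additional remarks on the degenerate case $\|g\|_\mu=0$ and the explicit deduction of the Lipschitz estimate on distances are fine but not needed beyond what the paper records.
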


\begin{proof}
See Appendix \ref{app:cin proofs}.
\end{proof}

\begin{prop}\label{prop:finite generating sets bi lipschitz equivalence}
Suppose $G$ is normally generated by finite subsets $A,A'$.
Then the standard conjugation-invariant word norms $\| \ \|_A$ and $\| \ \|_{A'}$ are bi-Lipschitz equivalent.
\end{prop}

\begin{proof}
See Appendix \ref{app:cin proofs}.
\end{proof}

\begin{prop}\label{prop:quotient of canonical word norms}
Let $\pi \colon G \to H$ be a group epimorphism and $A \subseteq G$ a normally generating set.
Set $B=\pi(A)$ and consider the standard conjugation-invariant norm $\| \ \|_A$ and $\| \ \|_B$ on $G$ and $H$ respectively.
Then $\pi \colon (G,\| \ \|_A) \to (H,\| \ \|_B)$ is a metric quotient map.
\end{prop}

\begin{proof}
See Appendix \ref{app:cin proofs}.
\end{proof}

\begin{void}\label{V:completions}

Let $(X,d)$ be a pseudometric space.
Let $\B C(X)$ denote the set of all Cauchy sequences $\B x =(x_n)$ in $X$.
There is a pseudometric on $\B C(X)$ defined by $\delta(\B x,\B y) = \lim_{n \to \infty} d(x_n,y_n)$.
The metric space that $(\B C(X), \delta)$ gives rise to, see \ref {V:pseudometric to metric}, is denoted $(\widehat{X},\widehat{d})$ and called the {\bf completion} of $X$.
There is an embedding $X \to \B C(X)$ which sends any $x \in X$ to the constant sequence.
Its image is dense in $\B C(X)$.
If $X$ is a metric space (rather than pseudometric) then the resulting $X \to \widehat{X}$ is an isometric embedding with dense image.
\end{void}

\begin{remark}
By construction $\widehat{X}$ is always a bona fide metric space even if we started with a {\em pseudo} metric space $X$.
\end{remark}

\section{The groups $\Free_S(A)$ for subrings $S \subseteq \RR$}\label{sec:FSA}

\begin{defn}\label{def:FSA}
Let $A$ be a set and $S \leq \RR$ a subring.
Consider the additive group of $S$ and set
\[
\Free_S(A) = \underset{{a \in A}}{*} S,
\]
the free product of copies of $S$ indexed by $a \in A$.
The ring structure of $S$ will be essential in Definition \ref{defn:psit in S}.
\end{defn}

Given a set $X$ we will write
\[
W(X)
\]
for the set of all words in the alphabet $X$ with concatenation as monoidal operation.

\begin{void}[Presentation of $\Free_A(S)$]\label{V:presentation FSA}
Let $A(S)$ be the following set of ``symbols''
\[
A(S) = \{a(s) \ | \ a \in A,  s \in S\}.
\]
It is clear that $\Free_S(A)$ has the following presentation
\begin{equation}\label{eqn:presentation of FSA}
\Free_S(A) = \left \langle A(S) \  \big| \ a(0) = 1 \ , \ a(r+s)=a(r)\cdot a(s) \right \rangle
\end{equation}
We will use this presentation throughout.

Thus, $\Free_S(A)$ is the quotient of the monoid $W(A(S))$ of all words on the alphabet $A(S)$ subject to the relations $a(0) \sim \{\}$ (the empty word) and $a(r+s)=a(r) \cdot a(s)$.

Clearly $A(S)$ generates $\Free_S(A)$, and hence normally generates it.
Any word $\B w \in W(A(S))$ has the form
\[
\B w = a_1(s_1) \cdots a_n(s_n)
\]
for some $a_i \in A$ and $s_i \in S$.
It is called reduced if $s_i \neq 0$ for all $i$ and $a_{i} \neq a_{i+1}$.
It is well known that any $g \in \Free_S(A)$ is represented by a unique reduced word $\B w$.
\end{void}

{\bf Throughout the remainder of the section} we fix a set $A$, a length function $\mu \colon A \to [0,\infty)$ and a subring $S \leq \RR$.

\begin{defn}\label{defn:total length function}
The {\bf total length function} obtained from $\mu$ is the function 
\[
\mu_S \colon W(A(S)) \to [0,\infty)
\]
defined for any $\B w \in W(A(S))$ of the form $\B w = a_1(s_1) \cdots a_n(s_n)$ by
\[
\mu_S (\B w) = \sum_{i=1}^n |s_i| \cdot \mu(a_i).
\]
We will frequently abuse notation and write $\mu$ instead of $\mu_S$.
\end{defn}

\begin{defn}\label{def:mu word norm}
Recall that $A(S)$ generates $\Free_S(A)$.
Let $\| \ \|_{\Free_S(A);\mu_S}$ denote the norm on $\Free_S(A)$ defined in \ref{V:word norms} with respect to the length function $\mu_S$ on $A(S)$.
\end{defn}

\begin{void}\label{V:when S=Z its F(A)}
When $S=\ZZ$ it is clear that $\Free_\ZZ(A)=\underset{a \in A}{*} \ZZ$ is the free group generated by $A$, denoted $F(A)$.
Indeed, $a(k)$ in $A(\ZZ)$ corresponds to $a^k$ in $\Free(A)$. 

If $\mu$ is the constant length function $1 \colon A \to [0,\infty)$ then the norm $\| \ \|_{\Free_\ZZ(A);1}$ is the standard conjugation-invariant word norm on $\Free(A)$, see Definition \ref{defn:standard word norm}.
\end{void}

The results in \cite{MR3426433} give an alternative description of the conjugation-invariant norm on $\Free(A)$ as a {\bf cancellation norm}.

In this section we will generalise this result and obtain a similar description of the norm $\| \ \|_{\Free_S(A);\mu_S}$ as a cancellation norm.
Once the language is set up, the results are expected.
In the interest of not blurring the essence of the construction by the details, the proofs of the results in this section are deferred to Appendix \ref{app:proof of FSA}.

\begin{defn}\label{defn:syllables}
The {\bf $A$-type} of a symbol (element) $a(s) \in A(S)$ is $a$.
A word $\B w \in W(A(S))$ is called a {\bf syllable} if all the letters it involves have the same $A$-type, i.e., if it has the form
\[
\B w = a(s_1) \cdots a(s_n)
\]
for some $a \in A$ and $s_1,\dots,s_n \in S$.
\end{defn}

The following is clear and is left to the reader to verify.

\begin{lemma}\label{lem:syllable basic 1}
\begin{enumerate}[label=(\alph*)]
\item\label{lem:syllable basic 1:syllabic rep}
Any word $\B w \in W(A(S))$ can be written in a unique way as the concatenation of syllables
\[
\B w = \B w_1 \cdots \B w_n
\]
where the $A$-types of $\B w_i$ and  $\B w_{i+1}$ are distinct for all $1 \leq i <n$.

\item
Any syllable $\B w = a(s_1) \cdots a(s_n)$ is equivalent in $\Free_S(A)$ to $a(s)$ where $s=\sum_i s_i$.
Hence, $\B w$ is trivial in $\Free_S(A)$ if and only if $s=0$.
\end{enumerate}
\end{lemma}

\begin{defn}\label{def:syllabic representation}
The presentation of a word $\B w=\B w_1 \cdots \B w_n$ in the alphabet $A(S)$ as in Lemma \ref{lem:syllable basic 1}\ref{lem:syllable basic 1:syllabic rep} is called the {\bf syllabic form} of $\B w$.
\end{defn}

\begin{defn}
For any $r \in \RR$ let $\myinterval{0,r}$ denote the interval $[0,r]$ if $r \geq 0$ or the  interval $[r,0]$ if $r\leq 0$.
If $S \leq \RR$ is a subring, let $\myinterval{0,r}_S$ denote $\myinterval{0,r} \cap S$.
\end{defn}

We note that if $t \in \myinterval{0,r}$ then it is always the case that $|t| \leq |r|$.

\begin{defn}\label{defn:S-cancellation sequence}
Let $\B w =a_1(s_1) \dots a_n(s_n)$ be a word in $W(A(S))$.
A {\bf sequence} in $\B w$ is a word $\B u \in W(A(S))$ of the form
\[
\B u = a_1(r_1) \cdots a_n(r_n)
\]
where $r_i \in \myinterval{0,s_i}_S$ for all $1 \leq i \leq n$.
It is called a {\bf cancellation sequence} in $\B w$ if
\[
\B w - \B u := a_1(s_1-r_1) \cdots a_n(s_n-r_n)
\]
represents the trivial element in $\Free_S(A)$.
\end{defn}


\begin{defn}\label{defn:can norm on WAS}
Define a {\em function}
\[
\| \ \|_{\can;S;\mu} \colon W(A(S)) \to [0,\infty)
\]
as follows.
For any $\B w \in W(A(S))$
\[
\| \B w \|_{\can;S;\mu} = \inf \{ \mu_S(\B u) : \text{$\B u$ is a cancellation sequence in $\B w$} \}
\]
When $S$ and $\mu$ are understood from the context we will write $\| \ \|_{\can}$ instead of $\| \ \|_{\can;S;\mu}$.
\end{defn}

The value of $\| \ \|_{\can}$ is in general mysterious, but is easy to calculate on syllables:

\begin{lemma}\label{lem:minimal cancellation seq in syllable}
Let $\B w = a(s_1) \cdots a(s_n)$ be a syllable in $W(A(S))$.
Set $s= \sum_i s_i$.
Then 
\[
\| \B w \|_{\can} = |s | \cdot \mu(a).
\]
Moreover, there exists a cancellation sequence $\B u$ in $\B w$ such that $\mu(\B u) =\| \B w\|_{\can}$.
\end{lemma}

\begin{proof}
See Appendix \ref{app:proof of FSA}
\end{proof}

The infimum in Definition \ref{defn:can norm on WAS} of $\| \ \|_{\can}$ is, in fact, a minimum:

\begin{prop}\label{prop:can-WAS minimum cancellation sequence}
Consider some $\B w \in W(A(S))$.
Then there exists a cancellation sequence $\B v \in W(A(S))$ in $\B w$ such that $\mu_S(\B v)=\| \B w\|_{\can}$.
We call it a {\bf minimal cancellation sequence}.
\end{prop}

\begin{proof}
Appendix \ref{app:proof of FSA}
\end{proof}

Next we show that  $\| \B w \|_{\can}$ only depends on the image of $\B w$ in $\Free_S(A)$.

\begin{prop}\label{prop:can-norm invariant representatives}
Suppose that $\B w, \B w' \in W(A(S))$ represent the same element in $\Free_S(A)$.
Then $\| \B w\|_{\can}=\| \B w'\|_{\can}$.
\end{prop}

\begin{proof}
Appendix \ref{app:proof of FSA}
\end{proof}

Proposition \ref{prop:can-norm invariant representatives} justifies the following definition.

\begin{defn}\label{defn:can norm of FSA}
The {\bf cancellation  norm} on $\Free_S(A)$ is the function $\| \ \|_{\can;S;\mu} \colon \Free_S(A) \to [0,\infty)$ defined for any $g \in \Free_S(A)$ by
\[
\|g\|_{\can;S;\mu} = \| \B w\|_{\can;S;\mu}
\]
where $\B w \in W(A(S))$ is a word in the alphabet $A(S)$  representing $g$.
When $S$ and $\mu$ are understood from the context we will write $\| \ \|_{\can}$.
\end{defn}

\begin{remark}
One checks that this is a pseudonorm thanks to Lemma \ref{lem:can WAS triangle}.
Alternatively, this is a special case of Theorem \ref{thm:FSA cancellation=CIWN} below.
\end{remark}

\begin{remark}
In general $\| \ \|_{\can;S;\mu}$ is only a pseudo-norm; it is a norm if and only if $\mu>0$.
This will be shown below in Proposition \ref{prop:mu>0 => norm} in conjunction with Theorem \ref{thm:FSA cancellation=CIWN}.
\end{remark}

\begin{remark}
By Proposition \ref{prop:can-WAS minimum cancellation sequence}, $\|g\|_{\can}=\mu_S(\B u)$ where $\B u$ is a minimal cancellation sequence in a word $\B w$ in the alphabet $A(S)$ which represents $g \in \Free_S(A)$.
\end{remark}

The proofs of Theorem \ref{thm:FSA cancellation=CIWN} -- Proposition \ref{prop:FSA dense FRA} below are deferred to Appendix \ref{app:proof of FSA}.

\begin{theorem}\label{thm:FSA cancellation=CIWN}
The function $\| \ \|_{\can} \colon \Free_S(A) \to [0,\infty)$ is equal to the conjugation-invariant word norm $\| \ \|_{\Free_S(A);\mu_S}$ defined in \ref{V:word norms}.
In particular, $\| \ \|_{\can}$ is a conjugation-invariant pseudo-norm.
\end{theorem}

\begin{prop}\label{prop:mu>0 => norm} 
The pseudo-norm $\| \ \|_{\Free_S(A);\mu}$ is a norm if and only if $\mu>0$.
\end{prop}

\begin{prop}\label{prop:restriction norms FSA<FRA}
Let $S \leq \RR$ be a subring.
Then  the norm on $\Free_S(A)$ as a subgroup of $(\Free_\RR(A); \| \ \|_{\Free_\RR(A);\mu_\RR})$ is the same as the norm $\| \ \|_{\Free_S(A);\mu_S}$.
\end{prop}

\begin{prop}\label{P:geodesics in F_R(A)}
$\Free_\RR(A)$ is a geodesic pseudometric space.
\end{prop}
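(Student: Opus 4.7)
Since the pseudometric on $\Free_\RR(A)$ is bi-invariant, it suffices to produce, for every $w\in\Free_\RR(A)$, an isometric embedding $\tilde\gamma\colon [0,\|w\|_{\Free_\RR(A)}]\to\Free_\RR(A)$ with $\tilde\gamma(0)=1$ and $\tilde\gamma(\|w\|_{\Free_\RR(A)})=w$; a geodesic from $w_1$ to $w_2$ is then obtained by composing the geodesic from $1$ to $w_1^{-1}w_2$ with the left multiplication by $w_1$. The case $\|w\|_{\Free_\RR(A)}=0$ is handled by the constant path at $1$, so we may assume $\|w\|_{\Free_\RR(A)}>0$.

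The construction hinges on the cancellation norm description of Theorem \ref{T:norm on FRA as cancellation norm}. Fix a word $\B w=a_1(r_1)\cdots a_n(r_n)$ representing $w$ and a $\mu$-minimal cancellation sequence $\B u=a_1(s_1)\cdots a_n(s_n)$ in $\B w$, so that
\[
\|w\|_{\Free_\RR(A)}=\sum_{i=1}^n |s_i|\mu(a_i).
\]
Because $\B w-\B u$ represents the trivial element, Proposition \ref{P:cancellation sequence product conjugates} (applied with $g_i=a_i(s_i)$ and $h_i=a_i(r_i-s_i)$) expresses $w$ as a product $u_1\cdots u_n$ where each $u_i$ is of the form $y_i a_i(s_i) y_i^{-1}$ for some $y_i\in\Free_\RR(A)$.

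I then define, for $t\in[0,1]$,
\[
\gamma(t)=\prod_{i=1}^n y_i\, a_i(ts_i)\, y_i^{-1}.
\]
Clearly $\gamma(0)=1$ and $\gamma(1)=w$. For $0\le t_1\le t_2\le 1$, Lemma \ref{L:conj invariant consequences} together with conjugation invariance gives
\[
\|\gamma(t_1)^{-1}\gamma(t_2)\|_{\Free_\RR(A)}\le \sum_{i=1}^n \|y_i\, a_i((t_2-t_1)s_i)\, y_i^{-1}\|_{\Free_\RR(A)}\le (t_2-t_1)\sum_{i=1}^n |s_i|\mu(a_i),
\]
and the right-hand side equals $(t_2-t_1)\|w\|_{\Free_\RR(A)}$. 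Applying this to the partition $0\le t_1\le t_2\le 1$ and combining with the triangle inequality,
\[
\|w\|_{\Free_\RR(A)}\le t_1\|w\|_{\Free_\RR(A)}+\|\gamma(t_1)^{-1}\gamma(t_2)\|_{\Free_\RR(A)}+(1-t_2)\|w\|_{\Free_\RR(A)},
\]
forces equality $\|\gamma(t_1)^{-1}\gamma(t_2)\|_{\Free_\RR(A)}=(t_2-t_1)\|w\|_{\Free_\RR(A)}$. Reparametrising by arc length, $\tilde\gamma(\tau):=\gamma(\tau/\|w\|_{\Free_\RR(A)})$ for $\tau\in[0,\|w\|_{\Free_\RR(A)}]$ is the desired isometric embedding.

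The main obstacle, and the whole reason the argument works, is the realisation of the norm as a cancellation norm (Theorem \ref{T:norm on FRA as cancellation norm}); once a $\mu$-minimal cancellation sequence is available, linear interpolation of the parameters $s_i$ inside the conjugates produces a curve whose length is bounded above by $\|w\|_{\Free_\RR(A)}$, and the triangle inequality promotes this bound to an equality. All remaining steps (reparametrisation, the $\|w\|_{\Free_\RR(A)}=0$ case, and bi-invariant translation to arbitrary endpoints) are formal.
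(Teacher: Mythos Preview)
Your proof is correct and follows essentially the same approach as the paper: pick a $\mu$-minimal cancellation sequence via Theorem~\ref{T:norm on FRA as cancellation norm}, linearly interpolate the parameters $s_i$, bound the increments using Lemma~\ref{L:conj invariant consequences}, and upgrade the bound to equality via the triangle inequality. The only cosmetic difference is that the paper writes the path directly as $\alpha(t)=a_1(r_1-s_1t)\cdots a_n(r_n-s_nt)$ (going from $w$ to $1$) without passing through Proposition~\ref{P:cancellation sequence product conjugates}, whereas you first decompose $w$ into conjugates and interpolate inside those; the Lipschitz estimate and the squeeze argument are identical.
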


\begin{prop}\label{prop:FSA dense FRA}
If $S$ is dense in $\RR$ then $\Free_S(A)$ is dense in $\Free_\RR(A)$ equipped with the norm $\| \ \|_{\Free_\RR(A);\mu_\RR}$.
\end{prop}

We next investigate basic naturality properties of the construction $\Free_S(A)$.
Any function $f \colon A \to B$ of sets gives rise to a natural homomorphism 
\[
f_* \colon \Free_S(A) \to \Free_S(B)
\]
which on generators $a(s) \in A(S) \subseteq \Free_S(A)$ has the effect $f_*(a(s))=f(a)(s) \in B(S)$.
This is the homomorphism $\underset{a \in A}{*} S \to \underset{b \in B}{*} S$ which sends the copy of $S$ corresponding to $a \in A$ via the identity $S \to S$ to the copy indexed by $f(a) \in B$.

\begin{defn}\label{D:proto-Lipschitz}
Let $\mu_A$ and $\mu_B$ be length functions on sets $A$ and $B$.
A function $f \colon A \to B$ is called {\bf proto-Lipschitz} if there exists $C \geq 0$ such that $\mu_B (f(a)) \leq C \mu_A(a)$ for all $a \in A$.
\end{defn}

\begin{prop}\label{P:naturality FRA} 
Suppose that $\mu_A$ and $\mu_B$ are length functions on sets $A,B$.
Let $f \colon A \to B$ be a proto-Lipschitz function with constant $C$. 
Then $f_*\colon \Free_S(A) \to \Free_S(B)$ is Lipschitz with constant $C$.
\end{prop}

\begin{proof}
Appendix \ref{app:proof of FSA}
\end{proof}

We next define a collection of endomorphisms $\psi_t \colon \Free_S(A) \to \Free_S(A)$ that will be central to this paper.

\begin{defn}\label{defn:psit in S}
Let $S \leq \RR$ be a subring and $t \in S$.
Use the universal property of $\Free_S(A)=\underset{a \in A}{*} S$ to define
\[
\psi_t \colon \Free_S(A) \to \Free_S(A)
\]
as the unique homomorphism 
which extends the homomorphisms $S \xto {s \mapsto t \cdot s} S \leq \Free_S(A)$, one for each $a \in A$.
\end{defn}

In terms of the presentation of $\Free_S(A)$ in \ref{V:presentation FSA}, for any $a_1(s_1) \cdots a_n(s_n)$ in $\Free_S(A)$
\begin{equation}\label{E:psi-t-explicit}
\psi_t \left( a_1(s_1) \cdots a_n(s_n) \right) \, = \,  a_1(ts_1) \cdots a_n(ts_n).
\end{equation}

\begin{prop}\label{P:psit monoidal properties}
The following holds for $\psi_t \colon \Free_\RR(A) \to \Free_\RR(A)$.
\begin{enumerate}[label=(\roman*)]
\item $\psi_0$ is the trivial homomorphism.
\item $\psi_1$ is the identity homomorphism.
\item $\psi_t \circ \psi_s = \psi_{st}$.
\end{enumerate}
In particular, if $t$ is invertible in $S$ then $\psi_t$ is an isomorphism with inverse $\psi_{t^{-1}}$.
\end{prop}

\begin{proof}
Immediate from \eqref{E:psi-t-explicit}.
\end{proof}

\begin{prop}\label{P:psit commutes with fstar}
For any $f \colon A \to B$  the following square commutes for any $t \in S$.
\[
\xymatrix{
\Free_S(A) \ar[r]^{f_*} \ar[d]_{\psi_t} &
\Free_S(B) \ar[d]^{\psi_t} 
\\
\Free_S(A) \ar[r]^{f_*} &
\Free_S(B)  
}
\]
\end{prop}

\begin{proof}
It suffices to check on generators $a(s) \in A(S)$.
Indeed, 
\[
f_*(\psi_t(a(s))) = f_*(a(ts))=f(a)(ts) =\psi_t(f(a)(s)) = \psi_t(f_*(a(s))).
\]
\end{proof}

It is immediate from the definition of $\psi_t$ in \eqref{E:psi-t-explicit} that $\| \psi_t(a(s))\| = |ts| \cdot \mu(a) = |t| \cdot \| a(s)\|$ for any generator $a(s) \in A(S)$.
The next lemma is fundamental to this paper.

\begin{lemma}\label{L:psit FRA properties}
\begin{enumerate}[label=(\alph*)]
\item
\label{L:psit FRA properties:multiplicative}
For any $w \in F_\RR(A)$ and any $t \in \RR$
\[
\|\psi_t(w)\|_{F_\RR(A);\mu_\RR} = |t| \cdot \|w\|_{F_\RR(A);\mu_\RR}.
\]

\item
\label{L:psit FRA properties:difference}
For any $w \in F_\RR(A)$ there exists $C_w \geq 0$ such that for any $s,t \in \RR$ 
\[
\| \psi_s(w)^{-1}\psi_t(w)\|_{F_\RR(A);\mu_\RR} \leq C_w |s-t|.
\]
\end{enumerate}
\end{lemma}

\begin{proof}
Appendix \ref{app:proof of FSA}
\end{proof}

\begin{defn}\label{def:w bracket}
Let $\B w=a_1(s_1) \cdots a_n(s_n)$ be a word in $W(A(S))$.
For any $t \in \RR$ set
\[
\B w\{t\} = a_1(\lfloor ts_1 \rfloor) \cdots a_n(\lfloor ts_n \rfloor)
\]
where $\lfloor \ \rfloor$ is the floor function.
Note that $\B w\{t\}$ is a word in the alphabet $A(\ZZ)$.
\end{defn}

\begin{prop}\label{prop:w truncated}
Consider some $g \in \Free_\RR(A)$ and a word $\B w$ in the alphabet $A(\RR)$ which represents it.
Then viewing the words $\B w\{t\}$ as elements in $\Free(A) \leq \Free_\RR(A)$, the latter equipped with the norm $\| \ \|_{\Free_\RR(A);\mu_\RR}$,
\[
\lim_{t \to \infty} \psi_{1/t}(\B w\{t\}) = g.
\]
\end{prop}

\begin{proof}
Write $\B w=a_1(s_1) \cdots a_n(s)$.
The result follows from Lemma \ref{L:conj invariant consequences}:
\[
\| \B w\cdot \psi_{1/t}(\B w\{t\})^{-1}\|_{\Free_\RR(A);\mu}
\leq
\sum_i \| a(s_i) a(-\tfrac{\lfloor ts_i \rfloor}{t}) \|_{\Free_\RR(A);\mu}
\leq
\sum_i |s_i - \tfrac{\lfloor ts_i \rfloor}{t}| \cdot \mu(a_i) \xto{t \to \infty} 0.
\]
\end{proof}

\section{The group of asymptotic directions}
\label{section:direction}

\noindent
{\bf Running assumption:}
As in the previous section all sets $A,B,\dots$ are assumed to be equipped with length functions $\mu_A,\mu_B,\dots$ (see Section \ref{V:word norms}).
By default, for any subring $S \leq \RR$ the groups $\Free_S(A)$ are equipped with the (pseudo) norm $\| \ \||_{\Free_S(A);\mu_S}$ from Definition \ref{def:mu word norm}.
If $S=\ZZ$ we simply write $\Free(A)$ instead of $\Free_\ZZ(A)$.

\medskip

\begin{defn}\label{def:scaling set}
A {\bf scaling set} is an unbounded subset $T$ of the interval $(0,\infty)$.
\end{defn}

\medskip
\noindent
{\bf Notation:}
Throughout $T,T'$ etc. will denote scaling sets.

\begin{defn}\label{D:Cauchy function}
A function $f \colon T \to X$ to a pseudo-metric space $(X,d)$ is called {\em Cauchy} if for any $\epsilon>0$, if $t,t' \in T$ are sufficiently large then $d(f(t),f(t'))<\epsilon$.
\end{defn}

\begin{defn}\label{D:DAd}
Let $T \subseteq (0,\infty)$ be unbounded.
A function 
\[
w \colon T \to F(A)
\] 
is called an {\bf asymptotic direction} if the function $f_w \colon T \to \Free_\RR(A)$ defined by
\[
f_w(t) = \psi_{1/t}(w(t))
\]
is Cauchy, where $\Free_\RR(A)$ is equipped with the norm $\| \ \|_{\Free_\RR(A);\mu}$.
Let 
\[
\C D(A,T;\mu) \subseteq \prod_{t \in T} \Free(A)
\]
denote the {group of all asymptotic directions} in $F(A)$ with respect to $T$ (and $\mu$).
When $\mu$ is understood from the context we write $\C D(A,T)$.
When $T=(0,\infty)$ we write $\C D(A;\mu)$ or $\C D(A)$.
\end{defn}

To justify the terminology we need to show that $\C D(A)$ is a subgroup of $\prod_{t \in T} \Free(A)$.
This follows easily from the triangle inequality, the symmetry and conjugation-invariance of the norm and the fact that $\psi_t$ are homomorphisms.
The details are left to the reader.

If $x_1,x_2,\dots$ is a Cauchy sequence in a pseudo metric space $(X,d)$ and $y \in X$ is a basepoint, then $d(x_n,y)$ is a Cauchy sequence in $\RR$.
Since $\RR$ is complete, this justifies the following definition.

\begin{defn}\label{D:norm on D(A) 2}
For any $w \in \C D(A,T;\mu)$ define a conjugation-invariant pseudonorm on $\C D(A,T;\mu)$ by
\[
\| w \|_{\C D(A,T;\mu)} = \lim_{t \in T} \| \psi_{1/t}(w(t))\|_{\Free_\RR(A);\mu}.
\]
We write $\| \ \|_{\C D(A)}$ when $T$ (and $\mu$) is understood from the context.
\end{defn}

That $\| \ \|_{\C D(A)}$ is a conjugation-invariant pseudonorm follows easily from the fact that $\| \ \|_{\Free_\RR(A)}$ has this property and that $\psi_{t}$ is a homomorphism.
It follows from Lemma \ref{L:psit FRA properties} and Proposition \ref{prop:restriction norms FSA<FRA} that

\begin{equation}\label{E:borm on D(A) via F(A)}
\| w \|_{\C D(A)} = \lim_{t \in T} \tfrac{1}{t} \| w(t)\|_{\Free(A);\mu}.
\end{equation}

With the notation of Section \ref{V:pseudometric to metric} we make the following definition.

\begin{defn}
The {\bf group of asymptotic directions} on $A$ is
\[
\tD(A,T;\mu) = \C D(A,T;\mu) / \C D(A,T;\mu)_0.
\]
We will omit $T$ and $\mu$ from the notation whenever they are understood from the context.
Let $\| \ \|_{\tD(A)}$ be the induced conjugation-invariant norm.
We will denote by $[w]$ the image of $w \in \C D(A)$ in $\tD(A)$.
\end{defn}

\begin{prop}\label{P:tD(A) is complete}
The group $\C D(A,T)$ is a complete pseudo metric space and hence, $\tD(A,T)$ is a complete metric spaces.
\end{prop}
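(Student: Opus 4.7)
The plan for the first assertion is the standard diagonal construction. Let $(w_n)$ be a Cauchy sequence in $\C D(A,T)$; since a Cauchy sequence converges as soon as some subsequence does, we may pass to a subsequence and assume $\|w_n^{-1}w_{n+1}\|_{\C D(A,T)} < 2^{-(n+1)}$ for every $n$. Choose an increasing sequence $s_1 < s_2 < \cdots$ in $T$ with $s_k \to \infty$ such that for every $t, t' \in T \cap [s_k,\infty)$,
\begin{enumerate}[label=(\roman*)]
\item $\|\psi_{1/t}(w_k(t)^{-1}w_{k+1}(t))\|_{\Free_\RR(A)} < 1/2^k$ (available from the definition of $\|\cdot\|_{\C D(A,T)}$), and
\item $\|\psi_{1/t}(w_k(t))^{-1}\psi_{1/t'}(w_k(t'))\|_{\Free_\RR(A)} < 1/2^k$ (available because $f_{w_k}$ is Cauchy).
\end{enumerate}
Define the candidate limit $w \colon T \to \Free(A)$ by $w(t) = w_k(t)$ when $t \in T \cap [s_k,s_{k+1})$.

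Two things remain. To see $w \in \C D(A,T)$, take $t \leq t'$ in $T \cap [s_m,\infty)$ with $t \in [s_k,s_{k+1})$, $t' \in [s_\ell,s_{\ell+1})$ (note $m \leq k \leq \ell$ automatically since the $s_j$ are increasing), and split
\[
\|\psi_{1/t}(w_k(t))^{-1}\psi_{1/t'}(w_\ell(t'))\|_{\Free_\RR(A)} \leq \|\psi_{1/t}(w_k(t))^{-1}\psi_{1/t'}(w_k(t'))\|_{\Free_\RR(A)} + \|\psi_{1/t'}(w_k(t')^{-1}w_\ell(t'))\|_{\Free_\RR(A)}.
\]
Bound the first summand by (ii), and telescope the second via $w_k^{-1}w_\ell = (w_k^{-1}w_{k+1})\cdots (w_{\ell-1}^{-1}w_\ell)$ together with the homomorphism property of $\psi_{1/t'}$; since $t' \geq s_\ell \geq s_j$ for every $k \leq j \leq \ell-1$, each factor is $<1/2^j$ by (i) and the geometric sum is bounded in terms of $m$ alone, giving a bound that tends to zero. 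The very same telescoping applied to $w_n^{-1}w(t) = w_n(t)^{-1}w_k(t)$ for $t \in [s_k,s_{k+1})$, $k \geq n$, yields $\|\psi_{1/t}(w_n(t)^{-1}w(t))\|_{\Free_\RR(A)} < 1/2^{n-1}$, so $\|w_n^{-1}w\|_{\C D(A,T)} \leq 1/2^{n-1}$ and the subsequence converges to $w$.

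The second assertion is then immediate: $\tD(A,T) = \C D(A,T)/\C D(A,T)_0$ is the quotient normed group from Section \ref{V:pseudometric to metric}, which is a metric quotient in the sense of Definition \ref{VD:metric quotient} because every element of a coset $g\,\C D(A,T)_0$ has the same pseudonorm as $g$, so Proposition \ref{P:quotient complete} applies. The only delicate point in the whole argument is the nested choice of thresholds $s_k$: properties (i) and (ii) must activate simultaneously and the sequence must be increasing, so that the telescoping estimate at $t' \geq s_\ell$ automatically controls every earlier index. Once this bookkeeping is in place, everything else is routine triangle-inequality and homomorphism manipulation.
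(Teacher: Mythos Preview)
Your argument is essentially the paper's: pass to a rapidly Cauchy subsequence, choose increasing thresholds $s_k$ where both the Cauchy condition on $f_{w_k}$ and the smallness of $\|\psi_{1/t}(w_k(t)^{-1}w_{k+1}(t))\|$ activate, interleave to build the limit, and verify membership in $\C D(A,T)$ and convergence by telescoping. The paper does exactly this (with thresholds called $r_n$, half-open intervals $(r_n,r_{n+1}]$, and the convention $r_0=0$, $w_0=1$). One cosmetic gap: your definition of $w$ does not cover $t\in T$ with $t<s_1$; either set $s_0=0$ and $w_0=1$ as the paper does, or declare $w(t)=w_1(t)$ there, since the Cauchy and convergence conditions only concern large $t$.
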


\begin{proof}
Let $w_1,w_2,\dots$ be a Cauchy sequence in $\C D(A)$.
We will find a pseudo limit $v$ in $\C D(A)$.
Since a Cauchy sequence converges if and only if it contains a convergent subsequence, we may assume by passage to a subsequence that 
\[
\|w_n^{-1}w_{n+1}\|_{\C D(A)} < \tfrac{1}{2^{n+1}}
\]
for all $n \geq 1$.
It follows from the definition of $\| \ \|_{\C D(A)}$ that there exist $r_1,r_2,\dots$ such that for any $n \geq 1$ the following hold.
\begin{itemize}
\item[(a)] 
For any $s,t \in T$ such that $s,t > r_n$ 
\[
\| \, \psi_{1/s}(w_n(s))^{-1} \cdot \psi_{1/t}(w_n(t)) \, \|_{\Free_\RR(A);\mu} < \tfrac{1}{2^{n}}.
\]
(This is because $w_n \in \C D(A)$).

\item[(b)]
For any $t \in T$ such that $t>r_n$ 
\[
\| \, \psi_{1/t}(w_{n+1}(t) \cdot w_n(t)^{-1})\, \|_{\Free_\RR(A);\mu} < \tfrac{1}{2^{n+1}}.
\]
(This is because $\|w_{n+1}w_n^{-1}\|_{\C D(A)}<\tfrac{1}{2^{n+1}}$).
\end{itemize}
By increasing the $r_n$'s we may arrange that $0<r_1<r_2< \dots$ and that $r_n \to \infty$.
For convenience we set $r_0=0$ and $w_0=1$, the constant function $w_0 \colon T \to \Free(A)$.
Observe that for any $j \geq i \geq 1$  and any  $t \in T \cap (r_{j-1},\infty)$
\begin{multline*}
\|\psi_{1/t}(w_i(t))^{-1} w_j(t))\|_{\Free_\RR(A);\mu} \leq
\sum_{k=i}^{j-1} \|\psi_{1/t}(w_k(t))^{-1} w_{k+1}(t))\|_{\Free_\RR(A);\mu} 
\\
<
\sum_{k=i}^{j-1} \tfrac{1}{2^{k+1}} < \tfrac{1}{2^{i}}.
\end{multline*}
Define $v \colon T \to \Free(A)$ as follows.
For any $t \in T$ there exists a unique $n \geq 0$ such that $r_n< t \leq r_{n+1}$.
Set
\[
v(t)=w_n(t).
\]
We claim that $v \in \C D(A)$.
To see this, given $\epsilon>0$ choose $n \geq 1$ such that $\tfrac{1}{2^{n-1}}<\epsilon$.
Consider some  $s, t > r_n$ and suppose that $s \geq t$.
Then $r_{i}<t \leq r_{i+1}$ and $r_{j}<s\leq r_{j+1}$ for some $j \geq i \geq n$.
Since $n \geq 1$,
\begin{align*}
\| \psi_{1/t}  & (v(t))^{-1}  \psi_{1/s}(v(s))\|_{\Free_\RR(A);\mu}  =  \| \psi_{1/t}(w_i(t))^{-1} \psi_{1/s}(w_j(s))\|_{\Free_\RR(A);\mu}
\\ 
& \leq 
\| \psi_{1/t}(w_i(t))^{-1} \psi_{1/s}(w_i(s))\|_{\Free_\RR(A);\mu} + \| \psi_{1/s}(w_i(s)^{-1} w_j(s))\|_{\Free_\RR(A);\mu}
\\
& <
\tfrac{1}{2^i} + \tfrac{1}{2^i}  < \tfrac{1}{2^{i-1}} \leq \tfrac{1}{2^{n-1}} < \epsilon.
\end{align*}
It remains to show that $w_n \to v$ in $\C D(A)$.
Given $\epsilon>0$ choose $m \geq 1$ such that $\tfrac{1}{2^{m}}<\epsilon$.
Suppose that $n \geq m$.
If $t > r_n$ then $r_{i}<t \leq r_{i+1}$ for some $i \geq n$ and since $n \geq m \geq 1$
\[
\|\psi_{1/t}(w_n(t)^{-1} v(t))\|_{\Free_\RR(A);\mu} =
\|\psi_{1/t}(w_n(t)^{-1} w_i(t))\|_{\Free_\RR(A);\mu} < \tfrac{1}{2^n} \leq \tfrac{1}{2^{m}}.
\]
Taking the limit $t \to \infty$ we get for all $n \geq m$
\[
\| w_n^{-1} v\|_{\C D(A)} \leq \tfrac{1}{2^m}<\epsilon.
\]
We deduce that $w_n \to v$.
This completes the proof.
\end{proof}

We remind the reader that $\Free_\RR(A)$ is equipped with the norm $\| \ \|_{\Free_\RR(A);\mu}$.
We will denote its completion by $\widehat{\Free_\RR(A)}$.
By definition, the elements of  $\C D(A)$ are functions $w \colon T \to \Free(A)$ such that $f_w(t)=\psi_{1/t}(w(t))$ is a Cauchy function into $\Free_\RR(A)$, hence it has a limit in $\widehat{\Free_\RR(A)}$.

\begin{defn}\label{D:Psi D(A)-->FR(A)}
Define a homomorphism $\Psi \colon \C D(A,T) \to \widehat{\Free_\RR(A)}$ by 
\[
\Psi(w) = \lim_{t \to T}\, \psi_{1/t}(w(t)). 
\]
Define a homomorphism $\hat{\Psi} \colon \tD(A) \to \widehat{\Free_\RR(A)}$ by 
\[
\hat{\Psi}([w]) = [\Psi(w)]. 
\]
\end{defn}

\begin{defn}\label{D:iota FR(A)-->tD(A)}
Define a homomorphism $\iota \colon \Free_\RR(A) \to \tD(A,T)$ as follows.
Given $w \in \Free_\RR(A)$ choose a word $\B w$ in the alphabet $A(\RR)$ which represents $w$.
There results a function $w\trc{\cdot} \colon T \to \Free(A)$ defined by $t \mapsto \B w\trc{t}$, see Definition \ref{def:w bracket}. 
Set
\[
\iota(w) = [\B w\trc{\cdot}].
\]
\end{defn}

We say that a homomorphism $\varphi \colon G \to H$ between groups equipped with pseudonorms is {\bf norm preserving} if $\|\varphi(g)\|=\| g\|$ for all $g \in G$.

Next we show that $\hat{\Psi}$ and $\iota$ in Definitions \ref{D:Psi D(A)-->FR(A)} and \ref{D:iota FR(A)-->tD(A)} do not depend on the choice of representatives.

\begin{prop}\label{P:properties Psi iota} 
The function $\hat{\Psi}$ and $\iota$ are well defined homomorphisms.
Moreover, $\hat{\Psi}$ is an isometry $\tD(A,T) \cong \widehat{\Free_\RR(A)}$ and $\hat{\Psi} \circ \iota$ is the canonical homomorphism $\Free_\RR(A) \to \widehat{\Free_\RR(A)}$.
In particular, $\iota$ is norm preserving and its image is dense in $\tD(A,T)$.
\end{prop}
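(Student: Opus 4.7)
The plan is to verify the claims in the order they are stated, exploiting Proposition \ref{P:psi_1/t(w[t])-->w} and the fact that $\tD(A,T)$ is already known to be complete by Proposition \ref{P:tD(A) is complete}.

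First I would establish that $\Psi$ is a well-defined homomorphism: for $w\in\C D(A,T)$, by Definition \ref{D:DAd} the function $t\mapsto\psi_{1/t}(w(t))$ is Cauchy in $\Free_\RR(A;\mu)$, so its class in $\widehat{\Free_\RR(A)}$ is well-defined. The homomorphism property follows from the continuity of multiplication in a (pseudo)metric group together with the fact that each $\psi_{1/t}$ is a homomorphism. Continuity of the norm then gives
\[
\|\Psi(w)\|_{\widehat{\Free_\RR(A)}} = \lim_{t\in T}\|\psi_{1/t}(w(t))\|_{\Free_\RR(A)} = \|w\|_{\C D(A,T)},
\]
so $\Psi$ is norm-preserving. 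In particular $\Psi$ vanishes on $\C D(A,T)_0$, and $\hat\Psi\colon\tD(A,T)\to\widehat{\Free_\RR(A)}$ is a well-defined norm-preserving homomorphism, hence an isometric embedding of metric groups.

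Next I would handle the well-definedness of $\iota$. If $\B w$ is any word in $A(\RR)$ representing $w\in\Free_\RR(A)$, Proposition \ref{P:psi_1/t(w[t])-->w} gives $\psi_{1/t}(\B w\trc{t})\to w$ in $\Free_\RR(A)$, so the function $t\mapsto\B w\trc{t}$ is Cauchy after applying $\psi_{1/t}$ and therefore lies in $\C D(A,T)$. To show the class in $\tD(A,T)$ does not depend on the chosen representative $\B w$, take two words $\B w,\B w'$ for $w$; by Lemma \ref{L:conj invariant consequences} and the above,
\[
\|(\B w\trc{\cdot})^{-1}(\B w'\trc{\cdot})\|_{\C D(A,T)} = \lim_{t\in T}\|\psi_{1/t}(\B w\trc{t})^{-1}\psi_{1/t}(\B w'\trc{t})\|_{\Free_\RR(A)} = \|w^{-1}w\| = 0,
\]
so the two candidates differ by an element of $\C D(A,T)_0$. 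The homomorphism property of $\iota$ then follows from the multiplicativity $(\B w\B v)\trc{t}=\B w\trc{t}\B v\trc{t}$ recorded in \eqref{E:truncation properties}, since we may use concatenated representatives for a product.

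Finally, the composition $\hat\Psi\circ\iota$ sends $w$ to the class of $\lim_t\psi_{1/t}(\B w\trc{t})=w$ in $\widehat{\Free_\RR(A)}$, which by construction is the canonical embedding. Norm-preservation of $\iota$ is then immediate from that of $\hat\Psi$ and the fact that $\Free_\RR(A)\to\widehat{\Free_\RR(A)}$ preserves the pseudonorm. For the density/surjectivity conclusion: the image of $\Free_\RR(A)$ is dense in $\widehat{\Free_\RR(A)}$ by construction, so $\hat\Psi$ has dense image; combined with the fact that $\tD(A,T)$ is complete (Proposition \ref{P:tD(A) is complete}) and $\hat\Psi$ is an isometric embedding, this forces $\hat\Psi$ to be an isometric isomorphism. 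Since $\hat\Psi\circ\iota$ has dense image and $\hat\Psi$ is a homeomorphism, $\iota(\Free_\RR(A))$ is dense in $\tD(A,T)$. The only mildly delicate point is the independence of $\iota(w)$ from the choice of representing word, but this is dispatched cleanly by the cited Proposition \ref{P:psi_1/t(w[t])-->w}; everything else is an exercise in chasing completions.
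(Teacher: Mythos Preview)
Your proof is correct and follows essentially the same approach as the paper: establish that $\Psi$ is norm-preserving (hence $\hat\Psi$ is an isometric embedding), use Proposition~\ref{P:psi_1/t(w[t])-->w} for the well-definedness of $\iota$ and the identification $\hat\Psi\circ\iota=\text{canonical map}$, and then combine completeness of $\tD(A,T)$ with density of the image to conclude that $\hat\Psi$ is surjective. The paper phrases the last step as ``image is closed (by completeness) and contains a dense subset,'' which is the same argument you give.
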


\begin{proof}
We will omit $T$ from the notation.
It is clear that $\Psi$ is well defined and it is a homomorphism because the norm on $\C D(A)$ is conjugation-invariant.
It is norm preserving because for any  $w \in \C D(A)$ 
 \begin{multline*}
\| \Psi(w)\|_{\widehat{\Free_\RR(A)}} = \| \lim_{t\to \infty} \psi_{1/t}(w(t))\|_{\widehat{\Free_\RR(A)}} =
\lim_{t\to \infty} \|  \psi_{1/t}(w(t))\|_{\widehat{\Free_\RR(A)}} 
\\
=
\lim_{t\to \infty} \|  \psi_{1/t}(w(t))\|_{\Free_\RR(A)} =
\|w\|_{\C D(A)}.
\end{multline*}
In particular $\OP{Ker}(\Psi)=\C D(A)_0$.
So $\Psi$ factors through $\hat{\Psi}$ which is therefore a norm preserving monomorphism, i.e., an isometric embedding.
Since $\tD(A)$ is complete by Proposition \ref{P:tD(A) is complete}, the image of $\hat{\Psi}$ is closed in $\widehat{\Free_\RR(A)}$.

We now prove that $\iota$ is well defined.
Suppose that $\B w$ represents $w \in \Free_\RR(A)$.
It follows from Proposition \ref{prop:w truncated} that $t \mapsto \psi_{1/t}(\B w\trc{t})$ is a Cauchy function in $\Free_\RR(A)$, hence by definition $\B w\trc{\cdot} \in \C D(A)$.
By Propositions \ref{prop:w truncated} and \ref{L:psit FRA properties}\ref{L:psit FRA properties:multiplicative}, if $\B w'$ is another word representing $w$ then 
\[
\| \B w\trc{\cdot}^{-1} \B w'\trc{\cdot}\|_{\C D(A)} =
\lim_t \| \psi_{1/t}(\B w\trc{t}^{-1} \cdot \B w'\trc{t})\|_{\Free_\RR(A);\mu} =
\| w^{-1} w \|_{\Free_\RR(A);\mu} =0.
\]
So $\iota$ is independent of choices, hence it is well defined.
It is a homomorphism because by construction $(\B w \cdot \B w')\trc{t}=\B w\trc{t} \cdot \B w'\trc{t}$ (concatenation of words, see Definition \ref{def:w bracket}).

By Proposition \ref{prop:w truncated}, for any $w \in \Free_\RR(A)$ and a representing word $\B w$,
\[
\hat{\Psi}(\iota(w)) = \hat{\Psi}([\B w\trc{\cdot}]) = \lim_t \psi_{1/t}(\B w\trc{t}) = w.
\]
(The limit is in $\Free_\RR(A)$).
Therefore $\hat{\Psi} \circ \iota$ is the canonical homomorphism $\Free_\RR(A) \to \widehat{\Free_\RR(A)}$.
In particular, the image of $\hat{\Psi}$ contains a dense subset of $\widehat{\Free_\RR(A)}$ and since it is closed, $\hat{\Psi}$ is surjective, hence an isometry.
Since $\Free_\RR(A) \to \widehat{\Free_\RR(A)}$ is norm preserving, so is $\iota$.
\end{proof}

Next, we show that $\C D(-,T)$ and $\tD(-,T)$ are functorial with respect to proto-Lipschitz functions (Definition \ref {D:proto-Lipschitz}) and with respect to inclusions $T' \subseteq T$ of scaling sets.
Recall that any function $f \colon A \to B$ induces a homomorphism $f_* \colon \Free(A) \to \Free(B)$.

\begin{prop}\label{P:naturality D and tD}
The assignments $A \mapsto \C D(A,T)$ and $A \mapsto \tD(A,T)$ are functors from the category of sets equipped with length functions and with proto-Lipschitz functions as morphisms to the category of groups with conjugation-invariant (pseudo) norms and Lipschitz homomorphisms.

In more detail, a proto-Lipschitz $f \colon A \to B$ with constant $C$ induces homomorphisms $\C D(f) \colon \C D(A,T) \to \C D(B,T)$ and $\tD(f) \colon \tD(A,T) \to \tD(B,T)$ defined by $\C D(f)(w) = f_* \circ w$ and $\tD(f)([w]) = [f_* \circ w]$, both are Lipschitz with constant $C$.
Also, $\C D(g \circ f)=\C D(g) \circ \C D(f)$ and $\C D(\OP{id})=\OP{id}$, and similarly $\tD(g \circ f)=\tD(g) \circ \tD(f)$ and $\tD(\OP{id})=\OP{id}$.
\end{prop}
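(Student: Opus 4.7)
The plan is to reduce everything to naturality facts already established for $\Free_\RR(-)$. Given a proto-Lipschitz function $f\colon A\to B$ with constant $C$, the induced homomorphism $f_*\colon \Free(A)\to \Free(B)$ extends to $f_*\colon \Free_\RR(A)\to \Free_\RR(B)$, which by Proposition \ref{P:naturality FRA} is Lipschitz with constant $C$. The crucial point is the intertwining relation $\psi_t \circ f_* = f_* \circ \psi_t$ from Proposition \ref{P:psit commutes with fstar}, which will transport the Cauchy condition and the norm computation through $f_*$.

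First, I would check that $\C D(f)$ is well defined. For $w\in \C D(A,T)$, the function $t\mapsto \psi_{1/t}(w(t))$ is Cauchy in $\Free_\RR(A)$ by definition. Applying Proposition \ref{P:psit commutes with fstar} one obtains
\[
\psi_{1/t}\bigl(f_*(w(t))\bigr) \;=\; f_*\bigl(\psi_{1/t}(w(t))\bigr),
\]
and since $f_*\colon \Free_\RR(A)\to \Free_\RR(B)$ is $C$-Lipschitz, the image of a Cauchy function under $f_*$ is again Cauchy in $\Free_\RR(B)$. Hence $f_*\circ w \in \C D(B,T)$. Next, for the Lipschitz constant, the same computation gives
\[
\|\C D(f)(w)\|_{\C D(B,T)} \;=\; \lim_{t\in T} \|f_*(\psi_{1/t}(w(t)))\|_{\Free_\RR(B)} \;\leq\; C\,\lim_{t\in T} \|\psi_{1/t}(w(t))\|_{\Free_\RR(A)} \;=\; C\,\|w\|_{\C D(A,T)}.
\]
In particular, $\|w\|_{\C D(A,T)}=0$ implies $\|\C D(f)(w)\|_{\C D(B,T)}=0$, so $\C D(f)$ descends to a well-defined $C$-Lipschitz homomorphism $\tD(f)\colon \tD(A,T)\to \tD(B,T)$ via the quotient construction of Section \ref{V:pseudometric to metric}.

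For functoriality, I would observe that for composable proto-Lipschitz maps $f\colon A\to B$ and $g\colon B\to C$, one has $(g\circ f)_* = g_*\circ f_*$ at the level of free groups, and therefore $(g\circ f)_*\circ w = g_*\circ(f_*\circ w)$ pointwise in $t\in T$, giving $\C D(g\circ f)=\C D(g)\circ \C D(f)$. The identity case is immediate. The same holds after passing to the quotient $\tD$. Finally, proto-Lipschitz maps are closed under composition (with constant $C_g C_f$) and contain the identity, so the category-theoretic requirements are met.

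There is no real obstacle here; the proof is essentially a bookkeeping argument that assembles Propositions \ref{P:psit commutes with fstar} and \ref{P:naturality FRA} together with the functoriality of $A\mapsto \Free(A)$. The only point requiring any care is the use of Proposition \ref{P:psit commutes with fstar} to commute $\psi_{1/t}$ past $f_*$ so that the hypothesis on $w$ (a statement about $\psi_{1/t}(w(t))$ in $\Free_\RR(A)$) can be matched with the conclusion about $\C D(f)(w)$ (a statement about $\psi_{1/t}(f_*(w(t)))$ in $\Free_\RR(B)$).
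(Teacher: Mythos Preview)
Your proof is correct and follows essentially the same route as the paper: use Proposition~\ref{P:psit commutes with fstar} to commute $\psi_{1/t}$ past $f_*$, then use the Lipschitz property of $f_*$ on $\Free_\RR$ to carry the Cauchy condition and the norm bound across, and finally descend to $\tD$ and check functoriality via $(g\circ f)_*=g_*\circ f_*$. The only cosmetic difference is that the paper computes the Lipschitz estimate using the equivalent formula \eqref{E:borm on D(A) via F(A)} (norms in $\Free(A)$) rather than Definition~\ref{D:norm on D(A) 2} directly, but this is immaterial.
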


\begin{proof}
Let $f \colon A \to B$ be proto Lipschitz with constant $C$.
By Proposition \ref{P:norm-mu universally Lipschitz} both $f_* \colon \Free(A) \to \Free(B)$ and $\Free_\RR(f) \colon \Free_\RR(A) \to \Free_\RR(B)$ are Lipschitz homomorphisms with constant $C$.
Consider some $w \in \C D(A)$ and set $v=f_* \circ w$.
Then $v \colon T \to \Free(B)$ and by Proposition \ref{P:psit commutes with fstar}, for any $t \in T$
\[
\psi_{1/t}(v(t))= \psi_{1/t}(f_* \circ w(t)) = \Free_\RR(f) (\psi_{1/t}(w(t))).
\]
Since the right hand side is Cauchy, $v \in \C D(B)$.

Thus, the assignment $w \mapsto f_* \circ w$ gives rise to a well defined function $\C D(f) \colon \C D(A) \to \C D(B)$.
It is a homomorphism because $f_*$ is.
It is Lipschitz with constant $C$ because 
by \eqref{E:borm on D(A) via F(A)}
\begin{align*}
\| \C D(f)(w)\|_{\C D(B)} &=
\lim_t \tfrac{1}{t} \| f_*(w(t))\|_{\Free(B);\mu_B} \\
& \leq
C \lim_t \tfrac{1}{t} \| w(t)\|_{\Free(A);\mu_A} 
\\
&=
C \| w\|_{\C D(A)}.
\end{align*}
In particular $\C D(f)$ carries $\C D(A)_0$ into $\C D(B)_0$ so $\C D(f)$ induces a homomorphism $\tD(f) \colon \tD(A) \to \tD(B)$ which is Lipschitz with constant $C$ and $\tD(f)([w])=[f_* \circ w]$.
The functorial properties of $A \mapsto \C D(A,T)$ follow from those of $A \mapsto \Free(A)$, i.e., $(g \circ f)_*=g_* \circ f_*$ and $\OP{id}_*=\OP{id}$.
Hence the functorial properties of $A \mapsto \tD(A,T)$.
\end{proof}

\begin{prop}\label{P:naturality tDA at T}
An inclusion $T \subseteq T'$ of scaling sets gives rise to natural transformations of functors 
\begin{align*}
\res^{T'}_T \colon \C D(-,T') \to \C D(-,T)    & ,&  &\res^{T'}_T(w) = w|_T \\
\widehat{\res}^{T'}_T \colon \tD(-,T') \to \tD(-,T)      & ,& & \hat{\res}^{T'}_T([w]) = [w|_T]
\end{align*}
The second of which is a an isometric embedding.
\end{prop}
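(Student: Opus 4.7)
The plan is to verify $\res^{T'}_T$ at the level of $\C D$, then pass to the quotient $\tD$ and invoke Proposition \ref{P:properties Psi iota} to upgrade to an isometry.

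First, for $w \in \C D(A,T')$ the function $f_w(t) = \psi_{1/t}(w(t))$ is Cauchy on $T'$, so its restriction to any unbounded $T \subseteq T'$ is Cauchy, giving $w|_T \in \C D(A,T)$. Moreover $\|f_w(t)\|_{\Free_\RR(A)}$ is a Cauchy, and therefore convergent, function into $\RR$, and its limit over $T$ agrees with its limit over $T'$ by the same cofinality. Hence $\|w|_T\|_{\C D(A,T)} = \|w\|_{\C D(A,T')}$. Since $\res^{T'}_T$ is visibly a homomorphism, it descends to a norm-preserving homomorphism $\widehat{\res}^{T'}_T \colon \tD(A,T') \to \tD(A,T)$, in particular an isometric embedding.

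Next I would prove that $\widehat{\res}^{T'}_T$ is surjective by factoring through $\widehat{\Free_\RR(A)}$. By Proposition \ref{P:properties Psi iota} the homomorphism $\hat\Psi_S \colon \tD(A,S) \to \widehat{\Free_\RR(A)}$ is an isometric isomorphism for every scaling set $S$. The same cofinality remark implies that the triangle
\[
\xymatrix{
\tD(A,T') \ar[rr]^{\widehat{\res}^{T'}_T} \ar[dr]_{\hat\Psi_{T'}}^{\cong} & & \tD(A,T) \ar[dl]^{\hat\Psi_T}_{\cong} \\
& \widehat{\Free_\RR(A)} &
}
\]
commutes, because for any representative $w$ both composites equal $\lim_{t \to \infty} \psi_{1/t}(w(t))$ (the limit over $T'$ exists and is therefore unchanged if we restrict to the cofinal subset $T$). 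Consequently $\widehat{\res}^{T'}_T = \hat\Psi_T^{-1} \circ \hat\Psi_{T'}$ is an isometric isomorphism.

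Finally, naturality in $A$ is immediate: for any proto-Lipschitz $f \colon A \to B$ one has $(\C D(f)w)|_T = (f_* \circ w)|_T = f_* \circ (w|_T) = \C D(f)(w|_T)$, giving a commutative naturality square for $\res^{T'}_T$, which descends to the corresponding square for $\widehat{\res}^{T'}_T$. The main content of the proof lies in the second step --- the identification $\tD(A,T) \cong \widehat{\Free_\RR(A)}$ independent of $T$ --- and this is packaged for us by Proposition \ref{P:properties Psi iota}; the remaining work is bookkeeping about tails of Cauchy functions.
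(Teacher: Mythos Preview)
Your proposal is correct and follows essentially the same approach as the paper: verify that restriction preserves the Cauchy condition and the norm, pass to the quotient, and then invoke the commuting triangle with the isometries $\hat\Psi_{T'}$ and $\hat\Psi_T$ from Proposition~\ref{P:properties Psi iota} to conclude that $\widehat{\res}^{T'}_T$ is an isometry. The paper's proof is organized in the same order and uses the same triangle; the only cosmetic difference is that the paper phrases the final step as ``both $\hat\Psi$ are isometries so the horizontal map is too'' rather than ``hence $\widehat{\res}^{T'}_T = \hat\Psi_T^{-1}\circ\hat\Psi_{T'}$.''
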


\begin{proof}
Clearly, if $u \colon T' \to \Free_\RR(A)$ is Cauchy then so is $u|_T$.
Therefore, $\res^{T'}_{T} \colon \C D(A,T') \to \C D(A,T)$ defined by $w \mapsto w|_T$ is a well defined homomorphism.
By definition and Proposition \ref{L:psit FRA properties}\ref{L:psit FRA properties:multiplicative} it is norm preserving:
\begin{multline*}
\| w|_T\|_{\C D(A,T)} 
= \lim_{t \in T} \|\psi_{1/t}(w(t))\|_{\Free_\RR(A);\mu_A} 
 \\
=
\lim_{t \in T'} \|\psi_{1/t}(w(t))\|_{\Free_\RR(A);\mu_A} =
\| w\|_{\C D(A,T')}.
\end{multline*}
It is a natural transformation of functors because by inspection the following square commutes for any proto-Lipschitz $f \colon A \to B$.
\[
\xymatrix{
\C D(A,T') \ar[r]^{\C D(f,T')} \ar[d]_{\res^{T'}_T} &
\C D(B,T') \ar[d]^{\res^{T'}_T} 
\\
\C D(A,T) \ar[r]^{\C D(f,T)} &
\C D(B,T).
}
\]
Since it is norm preserving there results a natural transformation of functors $\widehat{\res}^{T'}_T \colon \tD(-,T') \to \tD(-,T)$ defined by $\widehat{\res}^{T'}_T([w])=[w|_T]$.
By inspection the following triangle is commutative
\[
\xymatrix{
\tD(A,T') \ar[rr]^{\widehat{\res}^{T'}_T} \ar[dr]_{\hat{\Psi}} & &
\tD(A,T) \ar[dl]^{\hat{\Psi}} 
\\
& \widehat{\Free_\RR(A)}
}
\]
Since the arrows $\hat{\Psi}$ are isometries by Proposition \ref{P:properties Psi iota}, $\widehat{\res}^{T'}_T$ is an isometric embedding.
\end{proof}

Our next goal is to prove that $\tD(A,T)$ are contractible spaces.

\begin{notation}\label{nota:f_t}
Let $f \colon A \times B \to Y$ be a function (of sets).
Given $a \in A$ we write $f_a \colon B \to Y$ for the restriction of $f$ to $\{a\} \times B \cong B$.
Similarly, $f_b$ is the restriction to $A \times \{b\} \cong A$. The proof of the following
lemma is left to the reader.
\end{notation}

\begin{lemma}\label{L:extend homotpy from dense subspace}
Suppose that $A \subseteq X$ is a dense subspace of a metric space and $Y$ is a complete metric space.
Let $T$ be a metric space and endow $T \times X$ with the metric $\max\{d_T,d_X\}$.
Suppose that $f \colon T \times A \to Y$ is a function with the following properties.
With the notation \ref{nota:f_t},
\begin{enumerate}[label=(\roman*)]
\item \label{L:extend homotopy:Lipschitz fibre}
For any $t \in T$ the function $f_t \colon A \to Y$ is Lipschitz with constant $C_t$.

\item \label{L:extend homotopy:continuous bounded}
If $J \subseteq T$ is bounded then $\{C_t : t \in J\}$ is a bounded subset of $\RR$.

\item \label{L:extend homotopy:continuous fa}
For any $a \in A$ the resulting function $f_a \colon T \to Y$ is continuous.
\end{enumerate}
Then $f$ extends uniquely to a continuous function $\tilde{f} \colon T \times X \to Y$.
\qed
\end{lemma}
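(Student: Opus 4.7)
The plan is to define $\tilde{f}$ fiberwise, by extending each map $f_t$ to all of $X$, and then to verify joint continuity using the local uniform bound on Lipschitz constants supplied by hypothesis (ii).

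First, fix $t \in T$. By hypothesis (i), $f_t \colon A \to Y$ is Lipschitz with constant $C_t$. Since $A$ is dense in $X$ and $Y$ is complete, the standard extension result for Lipschitz maps into complete spaces produces a unique Lipschitz extension $\tilde{f}_t \colon X \to Y$ with the same constant $C_t$. Set $\tilde{f}(t,x):=\tilde{f}_t(x)$. By construction $\tilde f$ restricts to $f$ on $T \times A$, and the fact that each $\tilde f_t$ is the only Lipschitz extension of $f_t$ forces any continuous extension of $f$ to agree with $\tilde f$ on each slice $\{t\}\times X$; this already yields uniqueness.

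The substantive step is joint continuity at an arbitrary $(t_0,x_0)\in T\times X$. Given $\varepsilon>0$, I would begin by choosing a bounded neighbourhood $J$ of $t_0$ (say $J=\bar B(t_0,1)\subseteq T$). Hypothesis (ii) then provides $M\geq C_{t_0}$ with $C_t\leq M$ for every $t\in J$. Using density of $A$ in $X$, pick $a\in A$ with $d_X(a,x_0)<\varepsilon/(4M)$. Hypothesis (iii) gives continuity of $t\mapsto f(t,a)$ at $t_0$, so there exists $\delta>0$ with $\delta<\varepsilon/(4M)$, $\bar B(t_0,\delta)\subseteq J$, and
\[
d_T(t,t_0)<\delta \quad\Longrightarrow\quad d_Y(f(t,a),f(t_0,a))<\varepsilon/4.
\]
For any $(t,x)$ with $d_T(t,t_0)<\delta$ and $d_X(x,x_0)<\delta$ the triangle inequality gives
\[
d_Y(\tilde f(t,x),\tilde f(t_0,x_0)) \leq d_Y(\tilde f_t(x),\tilde f_t(a))+d_Y(f(t,a),f(t_0,a))+d_Y(\tilde f_{t_0}(a),\tilde f_{t_0}(x_0)),
\]
and since $t\in J$ the first and third terms are at most $M\cdot d_X(x,a)\leq M(\delta+\varepsilon/(4M))<\varepsilon/2$ and $M\cdot d_X(a,x_0)<\varepsilon/4$, respectively, while the middle term is $<\varepsilon/4$. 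Summing yields $<\varepsilon$, which proves continuity at $(t_0,x_0)$.

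The argument has no real obstacle: the only mild point is the $\varepsilon$-bookkeeping and the observation that hypothesis (ii) is precisely what converts a purely fiberwise Lipschitz control into a uniform control on a neighbourhood of $t_0$, without which density in $X$ would not suffice to push $x\to x_0$ through the $Y$-metric uniformly in $t$.
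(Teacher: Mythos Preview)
Your argument is correct and is exactly the natural approach: extend each $f_t$ by completeness and density, then use the locally uniform Lipschitz bound from (ii) together with (iii) to obtain joint continuity via a three-term triangle inequality. The paper itself omits the proof (it is explicitly ``left to the reader''), so there is nothing to compare against; your write-up would serve as a full proof, with only the cosmetic caveat that one should take $M>0$ (e.g.\ $M=1+\sup_{t\in J}C_t$) to avoid dividing by zero when all the $C_t$ vanish.
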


\begin{prop}\label{P:extend psi_t to D}
There is a continuous function $h \colon \RR \times \tD(A,T) \to \tD(A,T)$ such that the following diagram commutes for all $t \in \RR$.
\[
\xymatrix{
{F_\RR(A)} \ar[r]^{\psi_t} \ar[d]_{\iota} &
{F_\RR(A)} \ar[d]^{\iota} \\
\tD(A,T) \ar[r]_{h_t} & \tD(A,T),
}
\]
Moreover, each $h_t$ is a homomorphism and
\begin{enumerate}[label=(\roman*)]
\item\label{P:extend psi_t to D:Lip}
$\|h_t(x)\|_{\tD(A)}= |t| \cdot \|x\|_{\tD(A)}$ 

\item\label{P:extend psi_t to D:nat} 
$h_s \circ h_t=h_{st}$ and $h_1=\id$ and $h_0$ is the trivial homomorphism.
\end{enumerate}
\end{prop}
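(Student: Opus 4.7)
The plan is to construct each $h_t$ as the unique continuous extension of $\psi_t$ via the isometry $\hat{\Psi} \colon \tD(A,T) \xrightarrow{\cong} \widehat{F_\RR(A)}$ from Proposition \ref{P:properties Psi iota}, and then verify joint continuity in $(t,x)$ using Lemma \ref{L:extend homotpy from dense subspace}.

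First I would extend each $\psi_t$ individually. By Lemma \ref{L:psit FRA properties}\ref{L:psit FRA properties:multiplicative} the homomorphism $\psi_t \colon F_\RR(A) \to F_\RR(A)$ is Lipschitz with constant $|t|$. Composing with the canonical map $F_\RR(A) \to \widehat{F_\RR(A)}$ and using the completeness of $\widehat{F_\RR(A)}$, $\psi_t$ extends uniquely to a Lipschitz homomorphism $\hat{\psi}_t \colon \widehat{F_\RR(A)} \to \widehat{F_\RR(A)}$ with the same constant. Define
\[
h_t \ := \ \hat{\Psi}^{-1} \circ \hat{\psi}_t \circ \hat{\Psi} \colon \tD(A,T) \to \tD(A,T).
\]
Each $h_t$ is automatically a homomorphism. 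The commuting square with $\iota$ follows because $\hat{\Psi}\circ \iota$ is the canonical map $F_\RR(A)\to \widehat{F_\RR(A)}$ (Proposition \ref{P:properties Psi iota}), which intertwines $\psi_t$ and $\hat{\psi}_t$ by construction; since $\hat{\Psi}$ is injective, $h_t \circ \iota = \iota \circ \psi_t$. Property \ref{P:extend psi_t to D:Lip} holds on the dense subgroup $\iota(F_\RR(A))$ by Lemma \ref{L:psit FRA properties}\ref{L:psit FRA properties:multiplicative} together with the fact that $\iota$ is norm preserving, and then for all of $\tD(A,T)$ by continuity of $h_t$ and of the norm. Properties \ref{P:extend psi_t to D:nat} hold on $\iota(F_\RR(A))$ by Proposition \ref{P:psit monoidal properties} (applied under $\iota$), and extend to all of $\tD(A,T)$ by density and continuity.

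The main obstacle is joint continuity of $h\colon \RR\times \tD(A,T)\to \tD(A,T)$. For this I would apply Lemma \ref{L:extend homotpy from dense subspace} with $X = \tD(A,T)$, $Y = \tD(A,T)$, metric parameter space $\RR$, and the dense subset $A_0 := \iota(F_\RR(A)) \subseteq \tD(A,T)$, using the partially defined function
\[
f\colon \RR \times A_0 \to \tD(A,T), \qquad f(t,\iota(w))=\iota(\psi_t(w)) = h_t(\iota(w)).
\]
Condition \ref{L:extend homotopy:Lipschitz fibre} of the lemma is just the statement that $f_t$ is the restriction of the Lipschitz homomorphism $h_t$, with constant $|t|$; condition \ref{L:extend homotopy:continuous bounded} is the boundedness of $|t|$ on bounded $J \subseteq \RR$. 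Condition \ref{L:extend homotopy:continuous fa} is the key input from Lemma \ref{L:psit FRA properties}\ref{L:psit FRA properties:difference}: for fixed $w\in F_\RR(A)$ there is $C_w\geq 0$ with $\|\psi_s(w)^{-1}\psi_t(w)\|_{F_\RR(A)} \leq C_w|s-t|$, and since $\iota$ is norm preserving this gives $\|f(s,\iota(w))^{-1} f(t,\iota(w))\|_{\tD(A,T)}\leq C_w|s-t|$, so $t\mapsto f(t,\iota(w))$ is (Lipschitz, hence) continuous.

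The lemma then produces a continuous extension $h\colon \RR\times \tD(A,T)\to \tD(A,T)$. For each fixed $t$, the maps $h(t,-)$ and $h_t$ are both continuous on $\tD(A,T)$ and agree on the dense subset $A_0$, so they coincide; thus $h$ is the desired continuous family, and the stated diagram commutes together with the algebraic and metric properties listed in \ref{P:extend psi_t to D:Lip} and \ref{P:extend psi_t to D:nat}.
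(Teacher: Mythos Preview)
Your proof is correct and follows essentially the same approach as the paper: both transfer the problem to $\widehat{\Free_\RR(A)}$ via the isometry $\hat{\Psi}$ and invoke Lemma \ref{L:extend homotpy from dense subspace} with the dense image of $\Free_\RR(A)$, using the two parts of Lemma \ref{L:psit FRA properties} to verify the hypotheses. The only difference is organizational---you first construct each $h_t$ separately as the Lipschitz extension of $\psi_t$ and then appeal to the lemma for joint continuity, whereas the paper obtains $h$ directly from a single application of the lemma.
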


\begin{proof}
By composing with the isometry $\hat{\Psi}$  we may replace $\tD(A)$ with $\widehat{\Free_\RR(A)}$ and the map $\iota$ with the canonical homomorphism $i \colon \Free_\RR(A) \to \widehat{\Free_\RR(A)}$.
Since the latter is norm preserving, and since $\psi_t$ is Lipschitz with constant $|t|$ by Lemma \ref{L:psit FRA properties}\ref{L:psit FRA properties:multiplicative}, the universal property of the completion gives rise to a unique $h_t$  which extends $i \circ \psi_t \colon \Free_\RR(A) \to \widehat{\Free_\RR(A)}$ to a function $\tilde{h}_t \colon \widehat{\Free_\RR(A)} \to \widehat{\Free_\RR(A)}$.
Thus, there exists a unique function $h_t$ rendering the diagram commutative and is Lipschitz.
Item \ref{P:extend psi_t to D:Lip} follows from Lemma \ref{L:psit FRA properties}\ref{L:psit FRA properties:multiplicative} since the image of $\iota$ is dense in $\tD(A,T)$.
Similarly, \ref{P:extend psi_t to D:nat} follows from Proposition \ref{P:psit monoidal properties}.

The continuity of $h \colon \RR \times \tD(A,T) \to \tD(A,T)$ follows by applying  Lemma \ref{L:extend homotpy from dense subspace} with $X=\tD(A,T)$ and $A$ the image of $\Free_\RR(A)$ and $T=\RR$.
Indeed, conditions \ref{L:extend homotopy:Lipschitz fibre}, \ref{L:extend homotopy:continuous bounded} of that Lemma hold by Lemma \ref{L:psit FRA properties}\ref{L:psit FRA properties:multiplicative} which shows that $C_t=|t|$, and condition \ref{L:extend homotopy:continuous fa} follows from Lemma \ref{L:psit FRA properties}\ref{L:psit FRA properties:difference}.
\end{proof}

\begin{defn}\label{def:End(G)}
Let $G$ be a topological group.
Denote by $\End(G)$ the set of endomorphisms of $G$, which are continuous
with respect to the compact-open topology.
\end{defn}

Recall that the compact-open topology is designed so that there is an ``adjunction'' bijection
\[
\{ \text{continuous $f \colon \RR \to \End(G)$}\} \leftrightarrow
\{ \text{continuous $h \colon \RR \times G \to G$,  $h_t \in \End(G)$}\}
\]
Observe that $\tD(A,T)$ is equipped with a conjugation-invariant norm, and therefore it becomes a topological group.

\begin{cor}\label{C:tDA is contractible in category of groups}
$\tD(A,T)$ is contractible via a homotopy $h_t$ defined on the unit interval $[0,1]$ such that each $h_t$ is a homomorphism.
Hence, $\OP{End}(\tD(A,T))$ equipped with the compact-open topology is contractible.
\end{cor}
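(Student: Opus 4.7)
The plan is to invoke Proposition~\ref{P:extend psi_t to D} essentially directly. I would restrict the continuous map $h \colon \RR \times \tD(A,T) \to \tD(A,T)$ from that Proposition to $[0,1] \times \tD(A,T)$. Property~\ref{P:extend psi_t to D:nat} gives $h_1 = \id$ and $h_0$ equal to the trivial (constant) homomorphism, and the Proposition already asserts that each $h_t$ is a group homomorphism. Hence $t \mapsto h_t$ is a continuous contraction of $\tD(A,T)$ through homomorphisms, which is the first assertion.

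For the contractibility of $\End(\tD(A,T))$, I would define a homotopy
\[
H \colon [0,1] \times \End(\tD(A,T)) \to \End(\tD(A,T)), \qquad H(t,f) = h_t \circ f.
\]
Since each $h_t$ is a continuous homomorphism (continuity coming from joint continuity of $h$ in Proposition~\ref{P:extend psi_t to D}) and each $f$ is a continuous endomorphism, the composition $h_t \circ f$ is again a continuous endomorphism, so $H$ takes values in $\End(\tD(A,T))$. Property~\ref{P:extend psi_t to D:nat} yields $H(1,f) = f$ and $H(0,f)$ equal to the trivial endomorphism, so $H$ would contract $\End(\tD(A,T))$ onto the trivial endomorphism.

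The main step, and the only real obstacle, will be to verify that $H$ is continuous with respect to the compact-open topology on $\End(\tD(A,T))$, which, since $\tD(A,T)$ is a metric space, coincides with the topology of uniform convergence on compact subsets. Given $(t_n, f_n) \to (t_0, f_0)$ and a compact $K \subseteq \tD(A,T)$, I would use the triangle inequality followed by property~\ref{P:extend psi_t to D:Lip} (which says $h_{t_n}$ is Lipschitz with constant $|t_n|$):
\begin{align*}
\sup_{x \in K} d\bigl(h_{t_n}(f_n(x)), h_{t_0}(f_0(x))\bigr)
&\le \sup_{x \in K} |t_n|\cdot d\bigl(f_n(x), f_0(x)\bigr)\\
&\quad + \sup_{x \in K} d\bigl(h_{t_n}(f_0(x)), h_{t_0}(f_0(x))\bigr).
\end{align*}
The first supremum tends to zero because $f_n \to f_0$ uniformly on $K$ and the sequence $|t_n|$ is bounded; the second tends to zero because $f_0(K)$ is compact in $\tD(A,T)$ (continuous image of a compact set) and $h$, being continuous on the compact product $[t_0 - 1, t_0 + 1] \times f_0(K)$, is uniformly continuous there. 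Everything else is a direct application of Proposition~\ref{P:extend psi_t to D}.
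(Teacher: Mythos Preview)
Your approach is correct and matches the paper's intent: the paper states this corollary without proof, treating both assertions as immediate consequences of Proposition~\ref{P:extend psi_t to D}. Your first paragraph is exactly the intended argument, and your contraction $H(t,f)=h_t\circ f$ for $\OP{End}(\tD(A,T))$ is the natural one.

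One minor technical remark: your continuity argument for $H$ proceeds via sequences, which presupposes that the compact-open topology on $\OP{End}(\tD(A,T))$ is sequential (e.g.\ first-countable). This holds when $\tD(A,T)$ is hemicompact, but that is not guaranteed in general. The estimates you wrote down, however, work just as well with nets, or you can argue directly with subbasic open sets $V(K,U)$: given $(t_0,f_0)$ with $h_{t_0}(f_0(K))\subseteq U$, the compactness of $f_0(K)$ and joint continuity of $h$ give $\delta>0$ and an open $W\supseteq f_0(K)$ with $h((t_0-\delta,t_0+\delta)\times W)\subseteq U$, and then $(t_0-\delta,t_0+\delta)\times V(K,W)$ is the required neighbourhood. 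So the argument is sound once this cosmetic adjustment is made.
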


Recall that a (pseudo) metric space $X$ is called a {\em length space} if for any $x,y \in X$ and any $\epsilon>0$ there exists a path $\gamma \colon I \to X$ from $x$ to $y$ such that $\ell(\gamma)<d(x,y)+\epsilon$, where $\ell(\gamma)$ is the length of $\gamma$.

\begin{lemma}\label{L:closure of length spaces}
Let $G$ be a group equipped with a conjugation-invariant norm and $H$ a dense subgroup.
Suppose that $H$ is a length space and that $G$ is complete.
Then $G$ is a length space.
\end{lemma}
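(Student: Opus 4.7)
Since the norm is conjugation-invariant, the induced metric on $G$ is bi-invariant. Hence the left translation $g\mapsto y^{-1}g$ is an isometry sending $y$ to $1$, and it clearly preserves lengths of paths. The plan is therefore to reduce to the case $y=1$ and construct, for every $x\in G$ and every $\epsilon>0$, a path $\gamma\colon[0,1]\to G$ with $\gamma(0)=1$, $\gamma(1)=x$ and $\ell(\gamma)<\|x\|+\epsilon$.

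Exploiting density of $H$ in $G$ and the fact that $1\in H$, choose a sequence $(x_n)_{n\ge0}$ in $H$ with $x_0=1$ and $d(x_n,x)<\epsilon/2^{n+4}$ for $n\ge 1$, so that $d(x_n,x_{n+1})<\epsilon/2^{n+3}$ for $n\ge 1$ and $d(x_0,x_1)\le \|x\|+\epsilon/2^{5}$. Since $H$ is a length space, for each $n\ge 0$ choose a path $\alpha_n\colon[0,1]\to H$ from $x_n$ to $x_{n+1}$ of length
\[
\ell(\alpha_n)<d(x_n,x_{n+1})+\epsilon/2^{n+3}.
\]
Summing gives $\sum_{n\ge 0}\ell(\alpha_n)<\|x\|+\epsilon$ after a routine estimate.

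Next, I would concatenate these paths by reparametrising $\alpha_n$ on the interval $[1-2^{-n},1-2^{-(n+1)}]$, and define $\gamma(1)=x$. Continuity of $\gamma$ away from $t=1$ is immediate from the continuity of each $\alpha_n$ and the fact that consecutive pieces agree at the join points $1-2^{-(n+1)}$. Continuity at $t=1$ follows because, for $t\in[1-2^{-n},1)$, the value $\gamma(t)$ lies on some $\alpha_m$ with $m\ge n$, hence $d(\gamma(t),x_m)\le \ell(\alpha_m)\to 0$, while $d(x_m,x)\to 0$ by construction. This is the step where completeness of $G$ is implicitly used: the limit $x=\lim_m x_m$ is available in $G$, even though the $x_m$ lie in $H$.

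Finally, the length $\ell(\gamma)$ equals the supremum of $\sum d(\gamma(t_i),\gamma(t_{i+1}))$ over finite partitions $0=t_0<\dots<t_N=1$. Any such partition splits over finitely many of the intervals $[1-2^{-n},1-2^{-(n+1)}]$, and on each the corresponding sum is bounded by $\ell(\alpha_n)$; the contribution of the final interval $[1-2^{-N},1]$ is controlled by the tail $\sum_{m\ge N}\ell(\alpha_m)\to 0$ together with the distance from $x_N$ to $x$. Passing to the supremum yields $\ell(\gamma)\le \sum_{n\ge 0}\ell(\alpha_n)<\|x\|+\epsilon$, which completes the proof. The main technical point is the length estimate near $t=1$ for the tail of the concatenation; once the approximating sequence $(x_n)$ is chosen with a geometric rate, everything falls into place.
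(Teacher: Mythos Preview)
Your argument is correct, and it takes a genuinely different route from the paper. You concatenate short paths $\alpha_n$ end-to-end on shrinking subintervals of $[0,1)$ and close up at $t=1$; this is essentially the classical proof that a metric space with a dense length subspace is itself a length space, and it works in any metric space. The paper instead exploits the group structure: it writes $g=\lim_k h_n\Delta_{n+1}\cdots\Delta_k$, picks paths $\beta$ (from $1$ to $h_n$) and $\delta_i$ (from $1$ to $\Delta_i$), and defines $\alpha_k(t)=\beta(t)\,\delta_{n+1}(t)\cdots\delta_k(t)$ as a \emph{pointwise product} rather than a concatenation. It then invokes completeness to obtain a uniform limit $\alpha$ and uses Lemma~\ref{L:conj invariant consequences} to control $\ell(\alpha_k)$. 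So the paper's proof genuinely uses both the conjugation-invariance and the completeness hypotheses, while yours uses neither in an essential way: your reduction to $y=1$ is a convenience, and---contrary to your parenthetical remark---completeness is not actually needed in your argument, since $x$ is given from the start and you never take a limit whose existence is in doubt. Your approach is thus more elementary and proves a slightly stronger statement; the paper's approach, on the other hand, is more in keeping with the group-theoretic flavour of the surrounding material and illustrates the multiplicative technique used elsewhere.
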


\begin{proof}
Since the metric on $G$ is invariant it suffices to show that for any $g \in G$ and any $\epsilon >0$ there exists a path $\gamma$ from $1$ to $g$ such that $\ell(\gamma)<\|g\|+\epsilon$.
Choose a sequence $h_n \in H$ such that $h_n \to g$.
We may assume that $\|g^{-1}h_n\|<\tfrac{1}{2^{n+2}}$ for all $n \geq 1$, hence $\|h_n^{-1}h_{n+1}\|<\tfrac{1}{2^{n+1}}$.
For every $n \geq 2$ set $\Delta_n=h_{n-1}^{-1} h_n \in H$.
Then $\| \Delta_n\| < \tfrac{1}{2^n}$ for all $n \geq 2$.
Since $H$ is a length space, choose paths $\delta_n \colon I \to H$ from $1$ to $\delta_n$ such that $\ell(\delta_n)<\tfrac{1}{2^{n-1}}$.
Observe that for any $t \in I$
\[
\|\delta_n(t)\| \leq \ell(\delta_n|_{[0,t]}) \leq \ell(\delta_n)<\tfrac{1}{2^{n-1}}.
\]
Given $\epsilon>0$ choose $n \geq 2$ such that $\tfrac{\epsilon}{2} < \tfrac{1}{2^{n-2}}$.
Choose a path $\beta \colon I \to H$ from $1$ to $h_n$ such that $\ell(\beta)<\|h_n\|+\tfrac{\epsilon}{2}$.
Observe that for every $k \geq n$
\[
h_k = h_n \Delta_{n+1} \cdots \Delta_k.
\]
Define paths $\alpha_k \colon I \to H$ by $\alpha_k = \beta \cdot \delta_{n+1} \cdots \delta_k$, i.e., 
\[
\alpha_k(t) = \beta(t) \cdot \delta_{n+1}(t) \cdots \delta_k(t), \qquad (t \in I).
\]
It is clear that $\alpha_k$ are continuous because the norm on $G$ makes it a topological group.
Since $G$ is complete, $\alpha_k$ converges uniformly to some $\alpha \colon I \to G$ which is therefore also continuous.
Clearly $\alpha(0)=1$ and 
\[
\alpha(1)=\lim_{k \geq n} \alpha_k(1) = \lim_{k \geq n} h_n \cdot \Delta_{n+1}\cdots \Delta_k = \lim_{k \geq n} h_k = g.
\]
So $\alpha$ is a path from $1$ to $g$.
By Lemma \ref{L:conj invariant consequences}, for any $t,t' \in I$
\[
\| \alpha_k(t)^{-1}\alpha(t')\| \leq \|\beta(t)^{-1} \beta(t')\| + \sum_{i={n+1}}^k \|\delta_i(t)^{-1} \delta_i(t')\|.
\]
It follows that $\ell(\alpha_k) \leq \ell(\beta)+\sum_{i=n+1}^k \ell(\delta_i)$.
Since the convergence is uniform, 
\[
\ell(\alpha)=\lim_{k \geq n} \ell(\alpha_k) \leq \ell(\beta) + \tfrac{1}{2^{n-1}} < \|h_n\|+\tfrac{\epsilon}{2} + \tfrac{1}{2^{n-1}} < \|g\| + \epsilon.
\]
This completes the proof.
\end{proof}

\begin{cor}\label{C:tD is length space} 
$\tD(A,T)$ is a length space.
\end{cor}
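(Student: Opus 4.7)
The plan is to reduce this to Lemma \ref{L:closure of length spaces} by identifying a dense length subspace inside the complete metric group $\tD(A,T)$.

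First I would use Proposition \ref{P:properties Psi iota}, which provides an isometry $\hat{\Psi}\colon \tD(A,T)\to \widehat{\Free_\RR(A)}$, together with the homomorphism $\iota\colon \Free_\RR(A)\to \tD(A,T)$ whose image is dense and which is norm preserving. Since being a length space is an isometric invariant, it suffices to prove that $\widehat{\Free_\RR(A)}$ is a length space, or equivalently to verify the hypotheses of Lemma \ref{L:closure of length spaces} for the inclusion of the dense subgroup $\iota(\Free_\RR(A))\subseteq \tD(A,T)$.

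The main input is Proposition \ref{P:geodesics in F_R(A)}, which says $\Free_\RR(A)$ is a geodesic pseudometric space. A geodesic (pseudo)metric space is, in particular, a length space. Since $\iota$ is norm preserving and a homomorphism, and since the induced quotient map $\Free_\RR(A)\to \Free_\RR(A)/\Free_\RR(A)_0$ sends geodesics to paths of the same length (as the norm is conjugation invariant, the length of a path is computed from differences of successive points, which is preserved under the quotient), the image $\iota(\Free_\RR(A))$ inherits the length space property. Concretely, given two elements $\iota(v),\iota(w)$ and $\epsilon>0$, Proposition \ref{P:geodesics in F_R(A)} produces a geodesic $\alpha\colon [0,1]\to \Free_\RR(A)$ from $v$ to $w$ of length $\|v^{-1}w\|_{\Free_\RR(A)}$; then $\iota\circ \alpha$ is a continuous path in $\tD(A,T)$ from $\iota(v)$ to $\iota(w)$ whose length is at most $\|\iota(v)^{-1}\iota(w)\|_{\tD(A,T)}$, since $\iota$ is norm preserving.

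Finally, $\tD(A,T)$ is complete by Proposition \ref{P:tD(A) is complete} and carries a conjugation invariant norm by construction, so the hypotheses of Lemma \ref{L:closure of length spaces} are satisfied with $G=\tD(A,T)$ and $H=\iota(\Free_\RR(A))$. The lemma then gives the conclusion that $\tD(A,T)$ is a length space. The only mildly delicate point is the verification that $\iota(\Free_\RR(A))$ itself is a length space rather than just a geodesic continuous image of a length space; but this is immediate once one notes that $\iota$ preserves the length of paths, since for any partition $0=t_0<\cdots<t_k=1$ one has $\sum_i \|\iota(\alpha(t_{i-1}))^{-1}\iota(\alpha(t_i))\|_{\tD(A,T)}=\sum_i \|\alpha(t_{i-1})^{-1}\alpha(t_i)\|_{\Free_\RR(A)}$.
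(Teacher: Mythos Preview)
Your proposal is correct and follows essentially the same approach as the paper: identify $\tD(A,T)$ with $\widehat{\Free_\RR(A)}$ via Proposition~\ref{P:properties Psi iota}, use Proposition~\ref{P:geodesics in F_R(A)} to see that the dense image of $\Free_\RR(A)$ is a length (indeed geodesic) space, and conclude via Lemma~\ref{L:closure of length spaces}. The paper's version is slightly terser, working in $\widehat{\Free_\RR(A)}$ directly rather than $\tD(A,T)$, but the argument is the same.
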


\begin{proof}
By Proposition \ref{P:properties Psi iota} we may replace $\tD(A,T)$ with $\widehat{\Free_\RR(A)}$.
The natural completion homomorphism $i \colon \Free_\RR(A) \to \widehat{\Free_\RR(A)}$ is norm preserving, so its image is a geodesic space by Proposition \ref{P:geodesics in F_R(A)}.
Apply Lemma \ref{L:closure of length spaces}.
\end{proof}

%
%
\section{The directional asymptotic cone}
\label{Sec:diecrional asymptotic cone}

\noindent
{\bf Running assumption and notation:}
\begin{itemize}[leftmargin=*]
\item
Throughout, $T,T',\dots$ denote scaling sets (Definition \ref{def:scaling set}).

\item
If $\lambda \colon A \to [0,\infty)$ is a length function and $S \leq \RR$ a subring, we write
\[
\Free_S(A;\lambda)
\]
for the group $\Free_S(A)$ equipped with the norm $\| \ \|_{\Free_S(A);\lambda_S}$, see Definition \ref{def:mu word norm}.

\item
Throughout, {\em all groups} $G$ are assumed to be equipped with a conjugation-invariant norm $\| \ \|$.

\item
If a length function on (the underlying set of) $G$ is not specified, it is always assumed to be the norm $\| \ \| \colon G \to [0,\infty)$, and we will denote it by $\mu$.
In this case we write $\Free_S(G)$ instead of $\Free_S(G;\mu)$ or (the confusing notation) $\Free_S(G;\| \ \|)$.
\end{itemize}

\begin{void}\label{V:notation generators FRG}
We denote the generators of $\Free(G)$ by $\overline{g}$ where $g \in G$.
Similarly, the generators of $\Free_\RR(G)$ are denoted $\overline{g}(r)$ for $g \in G$ and $r \in \RR$.
Notice that $\overline{g^{-1}} \neq \overline{g}^{-1}$ in $\Free(G)$. 
The first element is a generator, the other is the inverse of a different generator. 
Similarly, $\overline{g}(-r) = \overline{g}(r)^{-1} \neq \overline{g^{-1}}(r)$ in $\Free_\RR(G)$.

The assignment $\overline{g} \mapsto g$ gives rise to a canonical homomorphism
\[
\pi \colon F(G) \to G.
\]
It is Lipschitz with constant $1$, by Proposition \ref{P:norm-mu universally Lipschitz} (recall that $\Free(G)$ is equipped with the norm $\| \ \|_{\Free(G);\mu}$, where $\mu(g)=\|g\|$).
Given a scaling set $T$, $\pi$ induces a homomorphism
\[
\pi_* \colon \prod_{t \in T} \Free(G) \to \prod_{t \in T} G, \qquad \pi_*(w) = \pi \circ w.
\]
That is, given a function $w \colon T \to \Free(G)$, we get $\pi_*(w)(t)=\pi(w(t))$ for all $t \in T$.
\end{void}

Recall from Definition \ref{D:DAd} that $\C D(G,T)$ is a subgroup of $\prod_{t \in T} \Free(G)$  equipped with the norm in Definition \ref{D:norm on D(A) 2}.
Set 
\[
\C C(G,T) \overset{\text{def}}{=} \pi_*\big(\C D(G,T)\big).
\]
Thus, $\C C(G,T)$ is the image of $\C D(G,T)$ under $\pi_* \colon \prod_T \Free(G) \to \prod_T G$.
As in the previous sections, we will suppress $T$  from the notation whenever it doesn't cause confusion. 
By slight abuse of notation 
\[
\pi_* \colon \C D(G,T) \to \C C(G,T)
\]
will also denote the restriction of $\pi_*$ to $\C D(G,T)$.
Equip $\C C(G,T)$ with the quotient conjugation-invariant pseudonorm in Lemma~\ref{L:quotient pseudonorm}, i.e., for any $g \colon T \to G$ in $\C C(G,T)$
\[
\| g \|_{\C C(G,T))} = \inf \big\{ \|w\|_{\C D(G,T)} :  w \in \pi_*^{-1}(g) \big\}.
\]

\begin{defn}\label{D:tC}
The {\bf directional asymptotic cone} of $G$ is the metrification of $\C C(G,T)$, see Section \ref{V:pseudometric to metric}.
That is,
\[
\tC(G,T) = \C C(G,T)/\C C(G,T)_0
\]
equipped with the induced conjugation-invariant norm. 
We will write $[g]$ for the image of $g \in \C C(G,T)$ in $\tC(G,T)$.
\end{defn}

\begin{void}\label{V:hat pi construction}
It is clear from the definitions that $\pi_* \colon \C D(G,T) \to \C C(G,T)$ carries $\C D(G,T)_0$ into $\C C(G,T)_0$.
There results an epimorphism of normed groups
\[
\hat{\pi} \colon \tD(G,T) \xto{\ [w] \mapsto [\pi_*(w)] \ } \tC(G,T). 
\]
By the definition of $\| \ \|_{\C C(G)}$ and since the quotients $\C D(G,T) \to \tD(G,T)$ and $\C C(G,T) \to \tC(G,T)$ are norm preserving, one easily checks that
\[
\| [g]\|_{\tC(G)} = \inf \, \left\{ \|[w]\|_{\tD(G)} : [w] \in \hat{\pi}^{-1}([g]) \right\}.
\]
Thus, $\| \ \|_{\tC(G)}$ is the quotient norm in Lemma \ref{L:quotient pseudonorm} induced from $\| \ \|_{\tD(G)}$ and $\hat{\pi}$ is a metric quotient.
It follows from Proposition \ref{P:properties Psi iota} that the image of the composition
\[
\Free_\RR(G) \xto{\iota} \tD(G,T) \xto{\hat{\pi}} \tC(G,T)
\]
is dense.
\end{void}

\begin{prop}\label{P:tC is complete} 
$\tC(G,T)$ is a complete normed group.
\end{prop}

\begin{proof}
Immediate from Section \ref{V:hat pi construction} and Propositions  \ref{P:quotient complete} and  \ref{P:properties Psi iota}.
\end{proof}

The following two fundamental lemmas will be used repeatedly.

\begin{lemma}\label{L:limsup<norm}
For any $g \in \C C(G,T)$
\[
\limsup_{t \in T} \, \tfrac{1}{t} \| g(t)\| \leq \| g \|_{\C C(G)}.
\]
\end{lemma}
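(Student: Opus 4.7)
The plan is simple and rests on three ingredients that are already in hand: the definition of $\|\cdot\|_{\C C(G,T)}$ as an infimum over lifts under $\pi_*$, the equality $\|w\|_{\C D(G,T)} = \lim_{t\in T}\tfrac{1}{t}\|w(t)\|_{\Free(G)}$ (equation \eqref{E:borm on D(A) via F(A)}), and the fact that $\pi\colon \Free(G)\to G$ is Lipschitz with constant $1$ (since $\mu(\bar g)=\|g\|$, by Proposition \ref{P:norm-mu universally Lipschitz}).

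Given $g\in\C C(G,T)$ and $\epsilon>0$, I would use the definition of the quotient pseudonorm to pick a lift $w\in\pi_*^{-1}(g)\subseteq \C D(G,T)$ with
\[
\|w\|_{\C D(G,T)}\;<\;\|g\|_{\C C(G,T)}+\epsilon .
\]
Because $\pi$ is Lipschitz with constant $1$, for every $t\in T$ we have $\|g(t)\|=\|\pi(w(t))\|\leq \|w(t)\|_{\Free(G)}$, hence
\[
\tfrac{1}{t}\|g(t)\|\;\leq\;\tfrac{1}{t}\|w(t)\|_{\Free(G)}.
\]
Passing to $\limsup_{t\in T}$ on the left and using that the right-hand side actually converges (since $w\in\C D(G,T)$ and \eqref{E:borm on D(A) via F(A)} gives $\lim_t\tfrac{1}{t}\|w(t)\|_{\Free(G)}=\|w\|_{\C D(G,T)}$), I obtain
\[
\limsup_{t\in T}\tfrac{1}{t}\|g(t)\|\;\leq\;\|w\|_{\C D(G,T)}\;<\;\|g\|_{\C C(G,T)}+\epsilon .
\]
Since $\epsilon>0$ is arbitrary, the stated inequality follows.

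There is no real obstacle here: the only point to be careful about is that the limit on the right exists (so one may compare $\limsup$ with a genuine limit), but this is immediate from the Cauchy condition built into the definition of $\C D(G,T)$. Everything else is just an application of the infimum definition of the quotient norm and the triviality that the $1$-Lipschitz map $\pi$ induces a fibrewise pointwise inequality.
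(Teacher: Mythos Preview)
Your proof is correct and essentially identical to the paper's own argument: pick an $\epsilon$-good lift $w\in\C D(G,T)$, use that $\pi$ is $1$-Lipschitz to bound $\tfrac{1}{t}\|g(t)\|$ by $\tfrac{1}{t}\|w(t)\|_{\Free(G)}$, and then invoke \eqref{E:borm on D(A) via F(A)} to identify the limit of the latter with $\|w\|_{\C D(G,T)}$. There is nothing to add.
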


\begin{proof}
Choose an arbitrary $\epsilon>0$.
By definition of $\| \ \|_{\C C(G)}$ there exists $w \in \C D(G)$ such that $\pi_*(w)=g$ and $\|w\|_{\C D(G)} < \|g\|_{\C C(G)} + \epsilon$.
Since $\pi \colon \Free(G) \to G$ is Lipschitz with constant $1$ we get  $\|g(t)\| \leq \|w(t)\|_{\Free(G);\mu}$ for all $t\in T$ (where $\mu = \| \ \|$, the norm in $G$).
It follows from \eqref{E:borm on D(A) via F(A)} that
\[
\limsup_{t \in T} \tfrac{1}{t} \|g(t)\| \leq \limsup_{t \in T} \tfrac{1}{t} \|w(t)\|_{\Free(G);\mu}
= \|w\|_{\C D(G)} 
< \|g\|_{\C C(G)}+ \epsilon.
\]
Since $\epsilon>0$ was arbitrary, the result follows.
\end{proof}

\begin{lemma}\label{L:characterize norm 0 in C}
For any function $g \colon T \to G$,
\[
\limsup_{t \in T} \, \tfrac{1}{t} \|g(t)\| =0
\] 
if and only if $g \in \C C(G,T)$ and $\|g\|_{\C C(G)}=0$.
\end{lemma}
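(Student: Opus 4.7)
The plan is to prove the two implications separately, with the ``if'' direction being immediate from the preceding lemma and the ``only if'' direction relying on a natural lift of $g$ to an element of $\C D(G,T)$.

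First, if $g \in \C C(G,T)$ and $\| g\|_{\C C(G)} = 0$, then Lemma \ref{L:limsup<norm} immediately yields $\limsup_{t \in T} \tfrac{1}{t}\| g(t)\| \leq 0$, and since this quantity is nonnegative it must be $0$.

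For the converse, suppose $\limsup_{t \in T} \tfrac{1}{t}\|g(t)\| = 0$. I would exhibit a preimage of $g$ under $\pi_*$ of vanishing norm by taking the tautological lift $w \colon T \to \Free(G)$ defined by $w(t) = \overline{g(t)}$, the generator of $\Free(G)$ corresponding to $g(t) \in G$. Clearly $\pi_*(w) = g$, so the task reduces to showing $w \in \C D(G,T)$ together with $\| w\|_{\C D(G)} = 0$. Observe that $\psi_{1/t}(w(t)) = \overline{g(t)}(1/t) \in \Free_\RR(G)$, whence by the definition of $\mu_\RR$ and Proposition \ref{P:norm-mu universally Lipschitz},
\[
\| \psi_{1/t}(w(t))\|_{\Free_\RR(G)} \leq \mu_\RR\big(\overline{g(t)}(1/t)\big) = \tfrac{1}{t}\|g(t)\|.
\]

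The hypothesis ensures this upper bound tends to $0$ as $t \to \infty$ in $T$. The Cauchy condition for $w$ then follows from the triangle inequality:
\[
\| \psi_{1/s}(w(s))^{-1} \psi_{1/t}(w(t))\|_{\Free_\RR(G)} \leq \tfrac{1}{s}\|g(s)\| + \tfrac{1}{t}\|g(t)\|,
\]
which is arbitrarily small for $s,t$ sufficiently large in $T$. Hence $w \in \C D(G,T)$, and moreover
\[
\| w\|_{\C D(G)} = \lim_{t \in T} \|\psi_{1/t}(w(t))\|_{\Free_\RR(G)} \leq \limsup_{t \in T} \tfrac{1}{t}\|g(t)\| = 0.
\]
Consequently $g = \pi_*(w) \in \C C(G,T)$ and $\|g\|_{\C C(G)} \leq \| w\|_{\C D(G)} = 0$, as required.

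No step here appears genuinely hard; the only thing that needs care is the observation that $\overline{g(t)}$, viewed in $\Free_\RR(G)$, has $\mu_\RR$-length exactly $\|g(t)\|$, so that rescaling by $\psi_{1/t}$ produces an element of norm at most $\tfrac{1}{t}\|g(t)\|$. This is precisely the place where the canonical length function $\mu(g) = \|g\|$ on $G$ makes the argument work.
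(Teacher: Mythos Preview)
Your proof is correct and follows essentially the same approach as the paper: both define the tautological lift $w(t)=\overline{g(t)}$, verify it is Cauchy via the bound $\|\psi_{1/t}(w(t))\|_{\Free_\RR(G)}\leq \tfrac{1}{t}\|g(t)\|$, and then invoke Lemma~\ref{L:limsup<norm} for the converse. The only cosmetic difference is that the paper obtains this bound as an equality via Lemma~\ref{L:psit FRA properties} and Corollary~\ref{C:F(A) as a subgroup of FR(A)}, whereas you use the cruder (but sufficient) estimate $\|\overline{g(t)}(1/t)\|\leq \mu_\RR(\overline{g(t)}(1/t))$.
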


\begin{proof}
Suppose that $\limsup_t \tfrac{1}{t} \|g(t)\| =0$.
Define $w \colon T \to \Free(G)$ by 
\[
w(t) \, \overset{\text{def}}{=} \, \overline{g(t)}.
\]
Set $\mu = \| \ \|$, the norm on $G$.
Lemma \ref{L:psit FRA properties} and Proposition \ref{prop:restriction norms FSA<FRA} imply that for any $s,t>0$
\begin{multline*}
\| \psi_{1/t}(w(t))^{-1} \psi_{1/s}(w(s))\|_{\Free_\RR(G);\mu} 
\\
\leq
\|\psi_{1/t}(\overline{g(t)})\|_{\Free_\RR(G);\mu} + \|\psi_{1/s}(\overline{g(s)})\|_{\Free_\RR(G);\mu} 
\\
=
\tfrac{1}{t}\|\overline{g(t)}\|_{\Free(G);\mu} + \tfrac{1}{s}\|\overline{g(s)}\|_{\Free(G);\mu} =
\tfrac{1}{t}\|g(t)\| + \tfrac{1}{s}\|g(s)\|.
\end{multline*}
By assumption, this is arbitrarily small if $s,t \gg 0$, so $w(t) \in \C D(G)$.
Clearly $g=\pi_* \circ w$, so $g \in \C C(G)$.
Moreover, by \eqref{E:borm on D(A) via F(A)}
\[
\| g \|_{\C C(G)} \leq \|w\|_{\C D(G)} = \lim_t \tfrac{1}{t} \|w(t)\|_{\Free(G);\mu} = \lim_t \tfrac{1}{t} \|g(t)\| =0.
\]
This proves the ``only if'' part of the lemma.
The opposite implication follows  from Lemma \ref{L:limsup<norm}.
\end{proof}

Recall the homomorphisms $\hat{\pi} \colon \tD(G,T) \to \tC(G,T)$ and $\iota \colon \Free_\RR(G) \to \tD(G,T)$ from Section \ref{V:hat pi construction} and Definition~\ref{D:iota FR(A)-->tD(A)}.

\begin{lemma}\label{L:pihat powers}
Consider some $g \in G$ and $r \in \RR$.
\begin{enumerate}[label=(\roman*)]
\item\label{L:pihat powers:1}
For any integer $p \neq 0$.
\[
\hat{\pi} \circ \iota\Big(\overline{g}(r)\Big) = \hat{\pi} \circ \iota \left(\overline{g^p}(\tfrac{r}{p})\right).
\]

\item\label{L:pihat powers:2}
For any $h \in G$ 
\[
\hat{\pi} \circ \iota\Big(\overline{hgh^{-1}}(r)\Big) = \hat{\pi} \circ \iota\Big(\overline{g}(r)\Big).
\]
\end{enumerate}
\end{lemma}

\begin{proof}
\ref{L:pihat powers:1}
By definition $\hat{\pi}(\iota(\overline{g}(r)))$ is represented by the function $\gamma(t)=g^{\Myfloor{rt}}$ and $\hat{\pi}(\iota (\overline{g^p}(\tfrac{r}{p})))$ is represented by $\gamma'(t) = g^{p \Myfloor{rt/p}}$.
They represent the same element in $\tC(G)$ by Lemma \ref{L:characterize norm 0 in C} since $| \Myfloor{tr} - p\Myfloor{\tfrac{tr}{p}} | < |p|$ so 
\[
\lim_{t \in T} \tfrac{1}{t}\| \gamma(t)^{-1} \gamma'(t)\| =
\lim_{t \in T} \tfrac{1}{t}\| g^{p \Myfloor{rt/p} - \Myfloor{rt}} \| \leq
\lim_{t \in T}  \|g\| \cdot \tfrac{|\Myfloor{tr} - p \Myfloor{\tfrac{tr}{p}}|}{t} = 0.
\]
\ref{L:pihat powers:2}
By definition $\hat{\pi} \circ \iota(\overline{g}(r))$ is represented by $\gamma(t)=g^{\Myfloor{rt}}$ and $\hat{\pi} \circ \iota(\overline{hgh^{-1}}(r))$ is represented by $\gamma'(t)=(hgh^{-1})^{\Myfloor{rt}}=hg^{\Myfloor{rt}}h^{-1}$.
Since
\begin{align*}
\lim_{t \in T} \tfrac{1}{t}\|\gamma(t)^{-1}\gamma'(t)\| &=
\lim_{t \in T} \tfrac{1}{t}\| g^{-\Myfloor{rt}} hg^{\Myfloor{rt}}h^{-1}\| \\
& \leq
\lim_{t \in T} \tfrac{1}{t}\| g^{-\Myfloor{rt}} hg^{\Myfloor{rt}}\| + \lim_{t \in T} \tfrac{1}{t}\|h^{-1}\| \\
& =
\lim_{t \in T} \tfrac{1}{t}(\|h\|+\|h^{-1}\|)=0,
\end{align*}
they represent the same element in $\tC(G)$ by Lemma \ref{L:characterize norm 0 in C}.
\end{proof}

\begin{void}\label{V:stable length}
Recall that a sequence $\{ a_n\}_{n \geq 1}$ of real numbers is called subadditive if
\[
a_{n+m} \leq a_n + a_m
\]
for every $n,m \geq 1$.
Fekete's Lemma \cite{MR1544613} asserts that the sequence $\tfrac{a_n}{n}$ is convergent and
\[
\lim_{n \to \infty} \tfrac{a_n}{n} = \inf \{ \tfrac{a_n}{n} : n \geq 1\}.
\]
Let $G$ be a group equipped with conjugation-invariant norm.
By Fekete's Lemma 
\[
\tau(g) \overset{\text{def}}{=} \inf_n \frac{\|g^n\|}{n} = \lim_n \frac{\|g^n\|}{n}
\]
which is called the {\bf stable length} or {\bf translation length} of $g \in G$.
Observe that $\tau(g^p)=|p| \cdot \tau(g)$ for any integer $p$ and that $\tau(hgh^{-1})=\tau(g)$ for any $h \in G$.
\end{void}

\begin{cor}\label{cor:Cnorm image of generators in FRG}
Consider a generator $\alpha=\overline{g}(r) \in \Free_\RR(G)$.
Then
\[
\| \hat{\pi}(\iota(\alpha))\|_{\tC(G,T)} = |r| \cdot \tau(g).
\]
\end{cor}

\begin{proof}
Assume $r \neq 0$
First, $[h]=\hat{\pi}(\iota(\alpha))$ is represented by the function $h \colon T \to G$ given by
$h(t)=g^{\Myfloor{rt}}$.
It follows from Lemma \ref{L:limsup<norm} that $\limsup_{t \in T} \tfrac{1}{t} \|h(t)\| \leq \|[h]\|_{\tC(G,T)}$.
Since $T$ is unbounded and since $\|g^{-n}\|=\|g^{n}\|$, we get from \ref{V:stable length} above 
\[
\limsup_{t \in T} \tfrac{1}{t}\| g^{\Myfloor{rt}}\| 
=
\limsup_{t \in T} \tfrac{|\Myfloor{rt}|}{t} \cdot \tfrac{\| g^{|\Myfloor{rt}|} \|}{|\Myfloor{rt}|}
=
|r| \cdot \lim_{n \to \infty} \|g^n\| =|r| \cdot \tau(g).
\]
Thus $\|[h]\|_{\tC(G,T)} \geq |r| \cdot \tau(g)$.
By Lemma \ref{L:pihat powers}\ref{L:pihat powers:1}, for any $k >0$ we get $[h]=[\hat{\pi}(\iota(\overline{g^k}(\tfrac{r}{k})))]$ in $\tC(G,T)$.
Since $\hat{\pi}$ is a metric quotient, see paragraph \ref{V:hat pi construction}, and by Proposition \ref{P:properties Psi iota} and Theorem \ref{thm:FSA cancellation=CIWN} (recall that $\mu \colon G \to [0,\infty)$ is the norm on $G$)
\[
\| [h]\|_{\tC(G,T)} 
\leq 
\| \iota(\overline{g^k}(\tfrac{r}{k})) \|_{\tD(G,T)}
=
\| \overline{g^k}(\tfrac{r}{k})\|_{\Free_\RR(G)} 
= \tfrac{|r|}{k} \|g^k\| 
\xto{ \  k \to \infty \ } |r| \cdot \tau(g).
\]
We deduce that $\|[h]\|_{\tC(G,T)}=\tau(g)$.
\end{proof}


We will use Proposition \ref{P:properties Psi iota} to identify $\tD(G,T)$ with $\widehat{\Free_\RR(G;\mu)}$  where $\mu$ is the norm on $G$.
We abusively write $\hat{\pi} \colon \widehat{\Free_\RR(G;\mu)} \to \tC(G,T)$ for the metric quotient homomorphism in paragraph \ref{V:hat pi construction}.

\begin{prop}\label{prop:quotient of tau}
Let $\mu,\tau \colon G \to [0,\infty)$ denote the norm and the stable length function, respectively.
Consider $\Theta \subseteq G$ with the property that for any $g \in G$ there exists $\theta \in \Theta$ and $p \in \ZZ$ such that $g$ is conjugate to $\theta^p$.
Let $\OP{id}^\tau_\mu \colon \Free_\RR(G;\mu) \to \Free_\RR(G;\tau)$ denote the identity homomorphism of the underlying groups.

Then there exists a homomorphism $\hat{\xi} \colon \widehat{\Free_\RR(G;\tau)} \to \tC(G;T,\mu)$ which renders the following diagram commutative.
\[
\xymatrix{
{\Free}_\RR(G;\mu) \ar[r]^\iota \ar[d]_{\OP{id}^\tau_\mu} &
\widehat{\Free_\RR(G;\mu)} \ar[d]_{\widehat{\OP{id}^\tau_\mu}} \ar[r]^{\hat{\pi}} 
&
\tC(G;T,\mu) 
\\
{\Free}_\RR(G;\tau) \ar[r]^\iota  &
\widehat{\Free_\RR(G;\tau)} \ar[ur]^{\hat{\xi}} 
\\
{\Free}_\RR(\Theta;\tau) \ar[r]^\iota  \ar@{^(->}[u]^{\OP{incl}} &
\widehat{\Free_\RR(\Theta;\tau)} \ar@{^(->}[u]^{\widehat{\OP{incl}}} \ar[uur]_{\hat{\xi}_\Theta}
}
\]
Moreover, $\xi_\Theta$ is a metric quotient homomorphism.
\end{prop}

\begin{proof}
Set $\xi =  \hat{\pi} \circ \iota \circ {({\OP{id}^\tau_\mu})^{-1}}$.
By Corollary \ref{cor:Cnorm image of generators in FRG} and Proposition \ref{P:norm-mu universally Lipschitz} $\xi$ is Lipschitz with constant $1$.
By the universal property of the completion and Proposition \ref{P:tC is complete} there exists a unique homomorphism $\hat{\xi}$ as in the diagram, Lipschitz with constant $1$, and rendering the  upper triangle commutative.
Set $\hat{\xi}_\Theta = \hat{\xi} \circ \widehat{\OP{incl}}$.
The diagram is commutative.
It remains to show that $\hat{\xi}_\Theta$ is a metric quotient homomorphism.

Define a homomorphism $\sigma \colon \Free_\RR(G;\tau) \to \Free_\RR(\Theta;\tau)$ as follows.
For a generator $\overline{g}(r)$, choose $\theta \in \Theta$ and $p \in \ZZ$ such that $g$ is conjugate (via some element $h \in G$) to $\theta^p$ and set $\sigma(\overline{g}(r)) = \overline{\theta}(pr)$.
If $g \in \Theta$ then we choose $\theta = g$ and $p=1$.
The relations \eqref{eqn:presentation of FSA} in paragraph \ref{V:presentation FSA} are satisfies by this assignment and it therefore extents to a unique homomorphism $\sigma \colon \Free_\RR(G;\tau) \to \Free_\RR(\Theta;\tau)$.
It is Lipschitz with constant $1$ by Proposition \ref{P:norm-mu universally Lipschitz} because for a generator $\overline{g}(r)$ 
\begin{multline*}
\|\sigma(\overline{g}(r))\|_{\Free_\RR(\Theta;\tau)} = \|\overline{\theta}(pr)\| = |pr| \cdot \tau(\theta) = |r| \cdot \tau(\theta^{p}) 
\\
= |r| \cdot \tau(h\theta^p h^{-1}) 
= |r| \cdot \tau(g) = \|\overline{g}(r)\|_{\Free(G;\tau)}.
\end{multline*}
Then $\sigma$ extends to a homomorphism $\widehat{\sigma} \colon \widehat{\Free_\RR(G;\tau)} \to \widehat{\Free_\RR(\Theta;\tau)}$, Lipschitz with constant $1$.
It is clear that $\sigma \circ \OP{incl}$ is the identity on $\Free_\RR(\Theta;\tau)$, hence $\widehat{\sigma}$ is a left inverse for $\widehat{\OP{incl}}$.
Furthermore, we claim that $\hat{\xi}_{\Theta} \circ \widehat{\sigma}= \hat{\xi}$.
Indeed using the density of $\Free_\RR(G;\tau)$ in its completion, it suffices to check, with the aid of Lemma \ref{L:pihat powers}, that on generators $\overline{g}(r) \in \Free_\RR(G;\tau)$
\begin{multline*}
(\hat{\xi}_\Theta \circ \widehat{\sigma})(\iota(\overline{g}(r))) =
(\hat{\xi}_\Theta \circ \iota)(\sigma(\overline{g}(r))) =
(\hat{\xi}_\Theta \circ \iota)(\overline{\theta}(pr)) 
\\
=
(\hat{\xi} \circ \iota)(\overline{\theta}(pr)) =
(\hat{\pi} \circ \iota)(\overline{\theta}(pr)) 
=
(\hat{\pi} \circ \iota)(\overline{h\theta^p h^{-1}}(r))
\\ 
=
(\hat{\pi} \circ \iota)(\overline{g}(r)) 
=
\hat{\xi}(\iota(\overline{g}(r))).
\end{multline*}
Consider some $[g] \in \tC(G;\mu)$.
Since $\hat{\pi}$ is a metric quotient, given $\epsilon >0$ there exists $\hat{\alpha} \in \widehat{\Free_\RR(G;\mu)}$ such that $\hat{\pi}(\hat{\alpha})=[g]$ and $\|\hat{\alpha}\|_{\widehat{\Free_\RR(G;\mu)}} < \|[g]\| + \epsilon$.
Set $\hat{\alpha}_\Theta = \hat{\sigma}(\widehat{\OP{id}^\tau_\mu}(\hat{\alpha}))$.
Then $\hat{\xi}_\Theta(\hat{\alpha}_\Theta)=\hat{\xi}(\widehat{\OP{id}^\tau_\mu}(\hat{\alpha}))=[g]$.
Since $\hat{\sigma}$ and $\widehat{\OP{id}^\tau_\mu}$ are Lipschitz with constant $1$ 
\[
\| \hat{\alpha}_\Theta \|_{\widehat{\Free_\RR(\Theta;\tau)}} \leq
 \|\hat{\alpha}\|_{\widehat{\Free_\RR(G;\mu)}} < \|[g]\|_{\tC(G)} + \epsilon.
\]
Since $\widehat{\xi}_\Theta$ is Lipschitz with constant $1$ it now follows that it is a metric quotient homomorphism.
\end{proof}

We will now address the naturality of the construction $\tC(G,T)$.
By inspection, any homomorphism $\varphi \colon G \to H$ fits into the commutative diagram
\begin{equation}\label{E:pi is natural transformation}
\xymatrix{
\Free(G) \ar[r]^{\varphi_*} \ar[d]_{\pi} &
\Free(H) \ar[d]^\pi 
\\
G \ar[r]^\varphi &
H.
}
\end{equation}

\begin{prop}[Naturality] \label{P:naturality C and tC}
The assignments $G \mapsto \C C(G,T)$ and $G \mapsto \tC(G,T)$ give rise to functors on the category of groups with conjugation-invariant (pseudo) norm and Lipschitz homomorphisms between them.
Moreover, in relation to the naturality of $\C D(-,T)$ and $\tD(-,T)$ in Proposition \ref{P:naturality D and tD}, the homomorphisms $\pi \colon \C D(-,T) \to \C C(-,T)$ and $\hat{\pi} \colon \tD(-,T) \to \tC(-,T)$ are natural transformations of functors.

In more detail, any Lipschitz homomorphism $\varphi \colon G \to H$ between groups equipped with conjugation-invariant pseudonorm gives rise to Lipschitz homomorphisms 
\begin{align*}
\C C(\varphi) \colon \C C(G,T) \to \C C(H,T) & & \text{defined by } \C C(\varphi)(g) = \varphi \circ g 
\\
\tC(\varphi) \colon \tC(G,T) \to \tC(H,T) & & \text{defined by } \tC(\varphi)([g]) = [\varphi \circ g]
\end{align*}
%
They render the following squares commutative
\[
\xymatrix{
\C D(G,T) \ar[r]^{\C D(\varphi)} \ar@{->>}[d]_{\pi_*}
&
\C D(H,T) \ar@{->>}[d]^{\pi_*}
\\
\C C(G,T) \ar[r]^{\C C(\varphi)} &
\C C(H,T)
}
\qquad 
\xymatrix{
\tD(G,T) \ar[r]^{\tD(\varphi)} \ar@{->>}[d]_{\hat{\pi }}
&
\tD(H,T) \ar@{->>}[d]^{\hat{\pi}}
\\
\tC(G,T) \ar[r]^{\tC(\varphi)} &
\tC(H,T).
}
\]
We abusively denote both $\C C(\varphi)$ and $\tC(\varphi)$ by $\varphi_*$.
Then $\OP{id}_*=\OP{id}$ and $(\psi \circ \vp)_*=\psi_* \circ \vp_*$.
\end{prop}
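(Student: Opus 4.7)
The plan is to deduce this proposition essentially formally from three things already in hand: the functoriality of $\C D(-,T)$ and $\tD(-,T)$ (Proposition \ref{P:naturality D and tD}), the naturality square \eqref{E:pi is natural transformation}, and the behaviour of metric quotients under Lipschitz maps (Proposition \ref{P:quotient Lipschitz maps}).

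First I would observe that a Lipschitz homomorphism $\varphi\colon G\to H$ with constant $C$ is exactly a proto-Lipschitz function with the same constant, with respect to the canonical length functions $\mu_G(g)=\|g\|$ and $\mu_H(h)=\|h\|$. By Proposition \ref{P:naturality D and tD}, this yields a Lipschitz homomorphism $\C D(\varphi)\colon \C D(G,T)\to \C D(H,T)$ with constant $C$, defined on generators by $w\mapsto \varphi_*\circ w$, where $\varphi_*\colon \Free(G)\to \Free(H)$ is the map on free groups from \eqref{E:pi is natural transformation}. Applying \eqref{E:pi is natural transformation} pointwise in $t\in T$ shows that for any $w\in \C D(G,T)$ one has $\pi\circ(\varphi_*\circ w)=\varphi\circ(\pi\circ w)$ in $\prod_T H$. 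Hence if $g=\pi_*(w)\in \C C(G,T)$, then $\varphi\circ g=\pi_*(\C D(\varphi)(w))$ lies in $\C C(H,T)$. This shows that $\C C(\varphi)(g):=\varphi\circ g$ is a well-defined function $\C C(G,T)\to \C C(H,T)$, independent of the choice of lift $w$, and it is a homomorphism because $\varphi$ is.

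For the Lipschitz property, I would invoke Proposition \ref{P:quotient Lipschitz maps} applied to the commutative square
\[
\xymatrix{
\C D(G,T) \ar[r]^{\C D(\varphi)} \ar@{->>}[d]_{\pi_*} & \C D(H,T) \ar@{->>}[d]^{\pi_*} \\
\C C(G,T) \ar[r]^{\C C(\varphi)} & \C C(H,T)
}
\]
whose vertical arrows are metric quotients by the very definition of the pseudonorm on $\C C$. Since the top horizontal arrow is Lipschitz with constant $C$, so is $\C C(\varphi)$. In particular $\C C(\varphi)$ sends $\C C(G,T)_0$ into $\C C(H,T)_0$, so it descends to a homomorphism $\tC(\varphi)\colon \tC(G,T)\to \tC(H,T)$ given by $[g]\mapsto[\varphi\circ g]$. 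Because the quotient map $\C C(G,T)\to \tC(G,T)$ is a metric quotient, Proposition \ref{P:quotient Lipschitz maps} applied a second time shows $\tC(\varphi)$ is Lipschitz with constant $C$, and by Section \ref{V:hat pi construction} the same argument produces the commuting square on the right involving $\hat\pi$.

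Functoriality $\OP{id}_*=\OP{id}$ and $(\psi\circ\varphi)_*=\psi_*\circ\vp_*$ is then immediate from the pointwise formula $\varphi_*(g)=\varphi\circ g$, and similarly for the bracketed version. The main potential obstacle is purely notational, namely keeping straight the two meanings of $\varphi_*$ (one on free groups, induced by the universal property, and one on $\C C$ and $\tC$ defined by postcomposition with $\varphi$); the content is light because all the work lives in Propositions \ref{P:naturality D and tD} and \ref{P:quotient Lipschitz maps}.
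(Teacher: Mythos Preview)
Your argument is correct, and it follows a slightly different (and in one respect cleaner) route than the paper. The paper builds $\tC(\varphi)$ directly from $\tD(\varphi)$: it shows that $\tD(\varphi)$ carries $\ker\hat{\pi}$ into $\ker\hat{\pi}$, and for this it invokes Lemma~\ref{L:characterize norm 0 in C} to translate ``$[w]\in\ker\hat{\pi}$'' into the condition $\lim_{t}\tfrac{1}{t}\|\pi(w(t))\|=0$, which is preserved because $\varphi$ is Lipschitz. You instead construct $\tC(\varphi)$ by descending $\C C(\varphi)$ through the metrification $\C C\to\tC$; since $\C C(\varphi)$ is Lipschitz it automatically preserves the norm-zero subgroup, so Lemma~\ref{L:characterize norm 0 in C} is never needed. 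The commutativity of the right-hand square then follows from the left square and the formula $\hat{\pi}([w])=[\pi_*(w)]$, which you allude to but could make explicit in one line: $\hat{\pi}\big(\tD(\varphi)[w]\big)=[\pi_*(\varphi_*\circ w)]=[\C C(\varphi)(\pi_* w)]=\tC(\varphi)\big(\hat{\pi}[w]\big)$. Both arguments are short; yours trades the appeal to Lemma~\ref{L:characterize norm 0 in C} for a second use of Proposition~\ref{P:quotient Lipschitz maps}.
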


\begin{proof}
We will omit $T$ from the notation.
Let $K(G)$ denote the kernel of $\pi \colon \C D(G) \to \C C(G)$.
Similarly, $\hat{K}(G)$ denotes the kernel of $\hat{\pi} \colon \tD(G) \to \tC(G)$.
By definition, if $w \in K(G)$ then $\pi \circ w$ is the constant function $1 \colon T \to G$.

Suppose that $\varphi \colon G \to H$ is Lipschitz with constant $C$.
By Proposition \ref{P:naturality D and tD} there are Lipschitz homomorphisms $\C D(\varphi) \colon \C D(G) \xto{w \mapsto \varphi \circ w} \C D(H)$ and $\tD(\varphi) \colon \tD(G) \xto{[w] \mapsto [\varphi \circ w]} \tD(H)$.
By the commutativity of \eqref{E:pi is natural transformation}, for any $w \in K(G)$
\[
\pi_* (\C D(\varphi)(w)) = \pi \circ \C D(\varphi)(w) = \pi \circ \varphi_* \circ w   = \varphi \circ \pi \circ w = 1.
\]
So $\C D(\varphi)(w) \in K(H)$ and therefore $\C D(\varphi)(K(G)) \subseteq K(H)$.

If $[w] \in \hat{K}(G)$ then $\pi \circ w$ represents the trivial element in $\tC(G)$ so by Lemma \ref{L:characterize norm 0 in C}, $\lim_{t \in T} \tfrac{1}{t} \| \pi(w(t))\|=0$.
By the commutativity of \eqref{E:pi is natural transformation}
\[
\hat{\pi}(\tD(\varphi)([w])) = 
\hat{\pi}([\varphi_* \circ w]) =
[\pi \circ \varphi_* \circ w] =
[\varphi \circ \pi \circ w].
\]
Since $\varphi$ is Lipschitz, $\lim_{t \in T} \tfrac{1}{t} \| \varphi \circ \pi \circ w\| =0$. 
By Lemma \ref{L:characterize norm 0 in C} $\hat{\pi}(\tD(\varphi)([w]))$ is the trivial element in $\tC(H)$.
It follows that $\tD(\varphi)(\hat{K}(G)) \subseteq \hat{K}(H)$.

As a result there exist unique homomorphisms $\C C(\varphi) \colon\C C(G) \to \C C(H)$ and $\tC(\varphi) \colon \tC(G) \to \tC(H)$ rendering the  squares in the statement of the proposition commutative.
By Proposition \ref{P:quotient Lipschitz maps} these are Lipschitz homomorphisms with constant $C$.
The functorial properties follows from the surjectivity of $\pi_*$ and $\hat{\pi}$ which imply that $\C C(\varphi)$ and $\tC(\varphi)$ filling in the above diagrams commutative are unique.
\end{proof}

We now address the naturality of the construction $\tC(G,T)$ with respect to $T$.

\begin{prop}\label{P:naturality tC scaling sets}
An inclusion  $T \subseteq T'$ of scaling sets gives rise to natural transformations of functors
\begin{align*}
\res^{T'}_T \colon \C C(-,T') \to \C C(-,T)    & ,&  &\res^{T'}_T(g) = g|_T \\
\widehat{\res}^{T'}_T \colon \tC(-,T') \to \tC(-,T)      & ,& & \hat{\res}^{T'}_T([g]) = [g|_T]
\end{align*}
Moreover, $\widehat{res}^{T'}_T \colon \tC(G,T') \to \tC(G,T)$ is a metric quotient.
\end{prop}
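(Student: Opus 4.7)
The plan is to build $\res^{T'}_T$ and $\widehat{\res}^{T'}_T$ on $\C C$ and $\tC$ out of the analogous restriction maps on $\C D$ and $\tD$ from Proposition \ref{P:naturality tDA at T}, and then deduce the metric quotient property by playing them off against the metric quotients $\pi_*$ and $\hat{\pi}$ of Section \ref{V:hat pi construction}.

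First I would check that $\res^{T'}_T \colon \C C(G,T') \to \C C(G,T)$, $g \mapsto g|_T$, is well defined. If $g = \pi_* \circ w$ for some $w \in \C D(G,T')$, then $w|_T \in \C D(G,T)$ by Proposition \ref{P:naturality tDA at T}, and $g|_T = \pi_* \circ (w|_T)$ lies in $\C C(G,T)$. Since restriction on $\C D$ is norm preserving,
\[
\|g|_T\|_{\C C(G,T)} \leq \|w|_T\|_{\C D(G,T)} = \|w\|_{\C D(G,T')},
\]
and taking the infimum over such $w$ yields $\|g|_T\|_{\C C(G,T)} \leq \|g\|_{\C C(G,T')}$. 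Thus $\res^{T'}_T$ is Lipschitz with constant $1$, and descends to a Lipschitz homomorphism $\widehat{\res}^{T'}_T$ on $\tC$. Naturality with respect to Lipschitz $\varphi \colon G \to H$ is immediate from the pointwise identity $\varphi \circ (g|_T) = (\varphi \circ g)|_T$ on both levels.

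The substantive claim is that $\widehat{\res}^{T'}_T \colon \tC(G,T') \to \tC(G,T)$ is a metric quotient. I would fit it into the commutative square
\[
\xymatrix{
\tD(G,T') \ar[r]^{\widehat{\res}^{T'}_T} \ar@{->>}[d]_{\hat{\pi}} &
\tD(G,T) \ar@{->>}[d]^{\hat{\pi}} \\
\tC(G,T') \ar[r]_{\widehat{\res}^{T'}_T} & \tC(G,T),
}
\]
which commutes because restriction and $\pi_*$ act pointwise. The top arrow is an isometric isomorphism by Proposition \ref{P:naturality tDA at T}, hence trivially a metric quotient, and by Section \ref{V:hat pi construction} the two vertical $\hat{\pi}$'s are metric quotients. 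Proposition \ref{P:metric quotient 2 out of 3} applied along the top-then-right path shows that the composite $\hat{\pi} \circ \widehat{\res}^{T'}_T \colon \tD(G,T') \to \tC(G,T)$ is a metric quotient. This composite equals the down-then-right path $\widehat{\res}^{T'}_T \circ \hat{\pi}$. Applying the second statement of Proposition \ref{P:metric quotient 2 out of 3} to this factorisation, with $f = \hat{\pi} \colon \tD(G,T') \to \tC(G,T')$ and $g = \widehat{\res}^{T'}_T \colon \tC(G,T') \to \tC(G,T)$, we conclude that $g$ is a metric quotient, as required.

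The only real subtlety is recognising the correct two applications of the 2-out-of-3 property and observing that an isometric isomorphism qualifies as a metric quotient. Once the commutative square is in place and the top arrow is identified as an isometry via Proposition \ref{P:naturality tDA at T}, the rest is purely formal.
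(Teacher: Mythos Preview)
Your proof is correct and follows essentially the same approach as the paper: both set up the commutative square with $\widehat{\res}^{T'}_T$ on $\tD$ on top and on $\tC$ on the bottom, invoke the fact that the top arrow is an isometry (Proposition \ref{P:naturality tDA at T}) and that the vertical $\hat{\pi}$'s are metric quotients, and conclude via Proposition \ref{P:metric quotient 2 out of 3}. The only cosmetic difference is that the paper verifies well-definedness of the bottom arrow by explicitly checking that $\widehat{\res}^{T'}_T$ on $\tD$ carries $\OP{Ker}(\hat{\pi})$ into $\OP{Ker}(\hat{\pi})$ using Lemma \ref{L:characterize norm 0 in C}, whereas you obtain it more directly by first building $\res^{T'}_T$ on $\C C$, showing it is Lipschitz with constant $1$, and descending; your route is slightly slicker but equivalent.
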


\begin{proof}
Fix $G$.
For any scaling set $S$ and let $K(S)$ denote the kernel of $\pi_* \colon \C D(G,S) \to \C C(G,S)$.

First, we show that there are homomorphisms $\res^{T'}_T \colon \C C(G,T') \to \C C(G,T)$ and $\widehat{\res}^{T'}_T \colon \tC(G,T') \to \tC(G,T)$ rendering the following squares commutative where the arrows in the first rows are defined in Proposition \ref{P:naturality tDA at T}.
\[
\xymatrix{
\C D(G,T') \ar[r]^{\res^{T'}_T} \ar@{->>}[d]_{\pi_*} &
\C D(G,T) \ar@{->>}[d]^{\pi_*} 
\\
\C C(G,T') \ar@{.>}[r]^{\res^{T'}_T} &
\C C(G,T)
}
\qquad
\xymatrix{
\tD(G,T') \ar[r]^{\widehat{\res}^{T'}_T} \ar@{->>}[d]_{\hat{\pi}} &
\tD(G,T) \ar@{->>}[d]^{\hat{\pi}} 
\\
\tC(G,T') \ar@{.>}[r]^{\widehat{\res}^{T'}_T} &
\tC(G,T)
}
\]
We consider the first diagram.
Choose some $w \in K(T')$.
Then $\pi_*(w) = \pi \circ w$ is the constant function $1 \colon T' \to \Free(G)$.
Therefore, $\pi_*(\res^{T'}_T(w)) = \pi \circ w|_T$ is the constant function $1 \colon T \to \Free(G)$, so $\res^{T'}_T$ in the first row carries $K(T')$ into $K(T)$.
Since $\pi_*$ is surjective, the broken arrow $\res^{T'}_T$ rendering the square commutative exists and is unique.
Its uniqueness and the fact that $\res^{T'}_T$ in the first row is a natural transformation, shows that the same holds for $\res^{T'}_T$ at the bottom.
By inspection, $\res^{T'}_T(g) = g|_T$.

We now consider the second square.
Choose some $[w] \in \hat{K}(T')$.
Notice that
\[
\hat{\pi}(\widehat{\res}^{T'}_T([w])) = \hat{\pi}([w|_T) = [\pi \circ w|_T]. 
\]
Since $\hat{\pi}([w])=[\pi \circ w]$ is the trivial element in $\tC(G,T')$, Lemma \ref{L:characterize norm 0 in C} implies 
\[
0 = \lim_{t \in T'} \tfrac{1}{t}\| \pi \circ w(t)\|= \lim_{t \in T} \tfrac{1}{t}\| \pi \circ w|_T(t)\|.
\]
It follows from the same lemma that $\hat{\pi}(\widehat{\res}^{T'}_T([w]))$ is the trivial element in $\tC(G,T)$.
Therefore $\res^{T'}_T$ at the first arrow carries $\hat{K}(T')$ into $\hat{K}(T)$.
Since $\hat{\pi}$ is surjective the broken arrow $\widehat{\res}^{T'}_T$ rendering the square commutative exists and is unique.
The same argument above shows that It is a natural transformation of functors and that it is given by the formula $\widehat{\res}^{T'}_T([g])=[g|_T]$.
Proposition \ref{P:naturality tDA at T} shows that $\widehat{\res^{T'}_T}$ in the first row is an isometry.
Since $\hat{\pi}$ is a metric quotient (see Section \ref{V:hat pi construction}), Proposition \ref{P:metric quotient 2 out of 3} shows that the broken arrow $\widehat{\res^{T'}_T}$ is a metric quotient too.
\end{proof}

\begin{prop}\label{P:restriction injective => isometry} 
The homomorphism $\widehat{\res}^{T'}_T \colon \tC(G,T') \to \tC(G,T)$ is an isometry if and only if it is injective.
\end{prop}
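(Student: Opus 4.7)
The plan is to exploit the fact, already established in Proposition \ref{P:naturality tC scaling sets}, that $\widehat{\res}^{T'}_T$ is a metric quotient. Recall (Definition \ref{VD:metric quotient} together with Lemma \ref{L:quotient pseudonorm}) that this means $\widehat{\res}^{T'}_T$ is surjective, Lipschitz with constant $1$, and that for every $[g]\in\tC(G,T)$
\[
\|[g]\|_{\tC(G,T)}=\inf\big\{\,\|[g']\|_{\tC(G,T')} \,:\, [g']\in\tC(G,T'),\ \widehat{\res}^{T'}_T([g'])=[g]\,\big\}.
\]
One direction is immediate: if $\widehat{\res}^{T'}_T$ is an isometry then it is, in particular, norm preserving, hence has trivial kernel, hence is injective.

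For the converse, assume that $\widehat{\res}^{T'}_T$ is injective. Fix $[g']\in\tC(G,T')$ and set $[g]:=\widehat{\res}^{T'}_T([g'])$. The Lipschitz estimate gives the easy inequality
\[
\|[g]\|_{\tC(G,T)}\leq \|[g']\|_{\tC(G,T')}.
\]
For the reverse, fix $\varepsilon>0$. By the metric quotient property there is some $[h']\in\tC(G,T')$ with $\widehat{\res}^{T'}_T([h'])=[g]$ and $\|[h']\|_{\tC(G,T')}<\|[g]\|_{\tC(G,T)}+\varepsilon$. Since $\widehat{\res}^{T'}_T([g'])=[g]=\widehat{\res}^{T'}_T([h'])$ and $\widehat{\res}^{T'}_T$ is injective by hypothesis, we deduce $[h']=[g']$. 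Therefore $\|[g']\|_{\tC(G,T')}<\|[g]\|_{\tC(G,T)}+\varepsilon$, and letting $\varepsilon\to 0$ gives $\|[g']\|_{\tC(G,T')}\leq \|[g]\|_{\tC(G,T)}$.

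Combining the two inequalities yields $\|[g']\|_{\tC(G,T')}=\|\widehat{\res}^{T'}_T([g'])\|_{\tC(G,T)}$ for every $[g']$, i.e.\ $\widehat{\res}^{T'}_T$ is norm preserving. Surjectivity (which already comes with being a metric quotient) then upgrades this to an isometry of normed groups. There is no serious obstacle here; the content of the proposition is simply the general principle that a metric quotient which happens to be injective is automatically isometric, applied to the restriction map of Proposition \ref{P:naturality tC scaling sets}.
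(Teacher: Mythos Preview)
Your proof is correct and is exactly the argument the paper has in mind: the proposition is stated without proof precisely because it is an immediate consequence of the fact, established in Proposition~\ref{P:naturality tC scaling sets}, that $\widehat{\res}^{T'}_T$ is a metric quotient, together with the elementary observation that an injective metric quotient between genuinely normed groups is automatically norm preserving (the infimum in Lemma~\ref{L:quotient pseudonorm} is taken over a singleton).
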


\begin{proof}
$\tC(G,T)$ and $\tC(G,T')$ are metric groups (not just pseudometric groups).
\end{proof}


\section{Metric and topological properties of the directional asymptotic cone}

\noindent
{\bf Running assumptions:}
All groups $G,H,\dots$ are equipped with a conjugation-invariant norm.
Throughout $T,T',\dots$ denote scaling sets.

\begin{void}
By Proposition \ref{P:tC is complete} $\tC(G,T)$ is a complete metric space.
\end{void}

\begin{prop}\label{P:polish cone}
If $G$ is countable then $\tC(G,T)$ is a separable.
\end{prop}

\begin{proof}
Clearly $\Free_\QQ(G)$ is countable, so $\tD(G,T)$ is separable by Proposition \ref{prop:FSA dense FRA} and Proposition \ref{P:properties Psi iota}.
Since $\hat{\pi}$ is a metric quotient, see \ref{V:hat pi construction}, it follows that $\tC(G,T)$ is separable.
\end{proof}

\begin{lemma}\label{L:quotient length spaces}
Suppose that $G$ is a group equipped with a conjugation-invariant pseudonorm and that $\pi \colon G \to H$ is a metric quotient.
If $G$ is a length space then so is $H$.
\end{lemma}

\begin{proof}
The norms are invariant to translation so it suffices to prove that for any $h \in H$ and $\epsilon >0$ there exists \ path $\gamma \colon I \to H$ from $1$ to $h$ such that $\ell(\gamma)<\|h\|+\epsilon$.
Choose $g \in G$ such that $\|g\| < \|h\|+\tfrac{\epsilon}{2}$ and a path $\beta \colon I \to G$ from $1$ to $g$ with $\ell(\beta) < \|g\|+\tfrac{\epsilon}{2}$.
Set $\gamma = \pi \circ \beta$.
Since $\pi$ is Lipschitz with constant $1$ we get $\ell(\gamma) \leq \ell(\beta)<\|h\|+\epsilon$.
\end{proof}

\begin{cor}\label{C:tC length space} 
The directional asymptotic cone $\tC(G,T)$ is a length space. 
\end{cor}

\begin{proof}
By Section \ref{V:hat pi construction} $\hat{\pi} \colon \tD(G,T) \to \tC(G,T)$ is a metric quotient.
Apply Corollary \ref{C:tD is length space} and Lemma \ref{L:quotient length spaces}.
\end{proof}

Our next goal is to prove that under some conditions on the scaling set, the directional asymptotic cone is a contractible group.

\begin{defn}\label{D:ample scaling set} 
A scaling set $T$ is called {\bf ample} if for every $s >0$ there exists a function $\sigma \colon T \to T$ such that $\lim_{t \in T} \tfrac{\sigma(t)}{t}=s$.
\end{defn}

\begin{example}\label{example:ample sets}
\begin{enumerate}[label=(\roman*),leftmargin=*]
\item The maximal scaling set $T=(0,\infty)$ is ample.
In this case $\sigma(t)=st$.
\item $T=\{1,2,3,\cdots\}$ is ample.
In this case $\sigma(t)=\lfloor st \rfloor$.
More generally, if $T$ is a non-constant arithmetic sequence of positive integers, then it is ample.
\item $T=\{ 2^n : n=0,1,2,\dots\}$ is not ample.
\end{enumerate}
\end{example}

The following lemma is an exercise in Calculus and is left to the reader.

\begin{lemma}\label{L:limsup domination}
Let $T$ be a scaling set, $f \colon T \to \RR$ a (bounded) function and $\sigma \colon T \to T$ a function such that $\lim_{t \in T} \sigma(t)=\infty$.
Then
\[
\limsup_{t \in T} f(\sigma(t)) \leq \limsup_{t \in T} f(t).
\]
\end{lemma}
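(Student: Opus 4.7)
The plan is to unravel the $\limsup$ directly from its definition as an infimum of suprema over tails. Set $L = \limsup_{t \in T} f(t)$; since $f$ is assumed bounded, $L \in \RR$. By definition, for every $\varepsilon > 0$ there exists $R > 0$ such that $\sup\{f(t) : t \in T,\, t \geq R\} \leq L + \varepsilon$, i.e.\ $f(t) \leq L + \varepsilon$ for all $t \in T$ with $t \geq R$.

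Next I would use the hypothesis $\lim_{t \in T} \sigma(t) = \infty$ to push this tail bound back along $\sigma$. Given the $R$ above, there exists $R' > 0$ such that for every $t \in T$ with $t \geq R'$, the value $\sigma(t) \in T$ satisfies $\sigma(t) \geq R$. Combining the two bounds, $f(\sigma(t)) \leq L + \varepsilon$ for all $t \in T$ with $t \geq R'$, so
\[
\sup\{f(\sigma(t)) : t \in T,\, t \geq R'\} \leq L + \varepsilon,
\]
and hence $\limsup_{t \in T} f(\sigma(t)) \leq L + \varepsilon$. Letting $\varepsilon \to 0$ yields the desired inequality $\limsup_{t \in T} f(\sigma(t)) \leq \limsup_{t \in T} f(t)$.

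There is no real obstacle here; the only mild subtlety is that the lemma is stated with a parenthetical boundedness hypothesis on $f$, which is what guarantees $L$ is a finite real number so that the $L + \varepsilon$ manipulation is legitimate. If one allowed $L = +\infty$ the inequality is vacuous, and if $L = -\infty$ one replaces $L + \varepsilon$ by an arbitrary constant $M$ in the argument above; either way the same two-step tail argument goes through.
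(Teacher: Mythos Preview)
Your proof is correct and is exactly the standard tail argument one would expect. The paper does not actually supply a proof of this lemma: it is stated with the remark ``The following lemma is an exercise in Calculus and is left to the reader,'' so there is no approach to compare against.
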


Recall the homomorphism $\iota \colon \Free_\RR(G) \to \tD(G,T)$ from Definition \ref{D:iota FR(A)-->tD(A)} and Proposition \ref{P:properties Psi iota}.

\begin{lemma}\label{L:h-t preserves kerel limsup lemma}
Suppose that $T$ is an ample scaling set.
For $\alpha \in \Free_\RR(G)$ and $s >0$ set $[g]=\hat{\pi}(\iota(\alpha))$ and $[h]=\hat{\pi}(\iota(\psi_s(\alpha)))$.
Then
\[
\limsup_{t \in T} \tfrac{1}{t} \|h(t)\|_G \leq s \cdot \|[g]\|_{\tC(G,T)}.
\]
\end{lemma}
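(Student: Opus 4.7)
The plan is to compare $h(t)$ directly with $g(\sigma(t))$ for a suitable map $\sigma\colon T\to T$ provided by ampleness, and then reduce to the abstract bound of Lemma \ref{L:limsup<norm}. Write $\alpha=\overline{g_1}(r_1)\cdots \overline{g_k}(r_k)$. Unwinding Definition \ref{D:iota FR(A)-->tD(A)} and the definition of $\hat{\pi}$, the representing functions are
\[
g(t)=g_1^{\myfloor{tr_1}}\cdots g_k^{\myfloor{tr_k}},\qquad h(t)=g_1^{\myfloor{tsr_1}}\cdots g_k^{\myfloor{tsr_k}}.
\]
Since $T$ is ample, choose $\sigma\colon T\to T$ with $\lim_{t\in T}\sigma(t)/t=s$; in particular $\sigma(t)\to\infty$. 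I would then estimate the ``error'' between $h(t)$ and $g(\sigma(t))$ by Lemma \ref{L:conj invariant consequences}, which yields
\[
\|h(t)^{-1}g(\sigma(t))\|_G\ \le\ \sum_{i=1}^k \bigl|\myfloor{\sigma(t)r_i}-\myfloor{tsr_i}\bigr|\cdot\|g_i\|.
\]

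The core calculation is then to show that $\tfrac{1}{t}\|h(t)^{-1}g(\sigma(t))\|_G\to 0$. For each $i$, the estimates \eqref{E:myfloor idendities} give $\bigl|\myfloor{\sigma(t)r_i}-\myfloor{tsr_i}\bigr|\le \bigl|\myfloor{(\sigma(t)-ts)r_i}\bigr|+1\le |r_i|\cdot|\sigma(t)-ts|+1$, so
\[
\tfrac{1}{t}\bigl|\myfloor{\sigma(t)r_i}-\myfloor{tsr_i}\bigr|\ \le\ |r_i|\cdot\bigl|\tfrac{\sigma(t)}{t}-s\bigr|+\tfrac{1}{t}\ \xrightarrow[t\in T]{}\ 0.
\]
Combined with the triangle inequality this gives $\limsup_{t\in T}\tfrac{1}{t}\|h(t)\|_G\le \limsup_{t\in T}\tfrac{1}{t}\|g(\sigma(t))\|_G$.

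Finally, I would pull out the factor $\sigma(t)/t$: since this ratio converges to the positive number $s$,
\[
\limsup_{t\in T}\tfrac{1}{t}\|g(\sigma(t))\|_G\ =\ \limsup_{t\in T}\tfrac{\sigma(t)}{t}\cdot\tfrac{1}{\sigma(t)}\|g(\sigma(t))\|_G\ =\ s\cdot\limsup_{t\in T}\tfrac{1}{\sigma(t)}\|g(\sigma(t))\|_G.
\]
Applying Lemma \ref{L:limsup domination} to the function $f(t)=\tfrac{1}{t}\|g(t)\|_G$ (bounded for large $t$ by Lemma \ref{L:limsup<norm}) and then Lemma \ref{L:limsup<norm} itself, we obtain
\[
\limsup_{t\in T}\tfrac{1}{\sigma(t)}\|g(\sigma(t))\|_G\ \le\ \limsup_{t\in T}\tfrac{1}{t}\|g(t)\|_G\ \le\ \|g\|_{\C C(G,T)}\ =\ \|[g]\|_{\tC(G,T)},
\]
which chains together to the claimed inequality. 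The only delicate point is the double use of ampleness together with Lemma \ref{L:limsup domination}, which is essential because $ts$ need not belong to $T$; everything else is a routine application of the conjugation-invariant triangle inequality and the floor estimates from \eqref{E:myfloor idendities}.
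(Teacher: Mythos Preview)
Your proof is correct and follows essentially the same approach as the paper's: both introduce the auxiliary function $k(t)=g(\sigma(t))$, show $\tfrac{1}{t}\|h(t)^{-1}k(t)\|\to 0$ via Lemma~\ref{L:conj invariant consequences}, then factor out $\sigma(t)/t\to s$ and finish with Lemmas~\ref{L:limsup domination} and~\ref{L:limsup<norm}. Your version is slightly more explicit in invoking the floor estimates~\eqref{E:myfloor idendities} and in justifying the product rule for $\limsup$, but the argument is the same.
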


\begin{proof}
Suppose that $\alpha = \overline{g_1}(r_1) \cdots \overline{g_k}(r_k)$.
By the definitions of $\psi_s$, of $\iota$ and of $\hat{\pi}$, for any $t \in T$
\[
g(t) = g_1^{\Myfloor{r_1t}} \cdots g_n^{\Myfloor{r_nt}} \qquad \text{and} \qquad h(t) = g_1^{\Myfloor{r_1st}} \cdots g_n^{\Myfloor{r_nst}}.
\]
Let $\sigma \colon T \to T$ be a function such that $\lim_{t \in T} \tfrac{\sigma(t)}{t}=s$.
Since $s>0$ it is clear that $\lim_{t \in T} \sigma(t)=\infty$.
Let $k \colon T \to G$ be the function $k(t)=g_1^{\Myfloor{r_1\sigma(t)}} \cdots g_n^{\Myfloor{r_n\sigma(t)}}$.
By Lemma \ref{L:conj invariant consequences}, for any $t \in T$
\[
\tfrac{1}{t}\| h(t)^{-1} k(t)\| \leq
\sum_{i=1}^n \|g_i\| \cdot \left| \tfrac{\Myfloor{r_ist}}{t} - \tfrac{\Myfloor{r_i\sigma(t)}}{t} \right| \xto{t \to \infty} 0.
\]
Since $\sigma(t) \in T$ for all $t \in T$, we use Lemmas \ref{L:limsup domination} and \ref{L:limsup<norm} to estimate:
\begin{align*}
\limsup_{t \in T} \tfrac{1}{t} \|h(t)\| &=
\limsup_{t \in T} \tfrac{1}{t} \|k(t)\| 
\\
&=
\limsup_{t \in T} \tfrac{\sigma(t)}{t} \cdot \tfrac{1}{\sigma(t)} \|g_1^{\Myfloor{r_1\sigma(t)}} \cdots g_n^{\Myfloor{r_n\sigma(t)}}\|
\\
& \leq
s \cdot \limsup_{t \in T} \tfrac{1}{t} \|g_1^{\Myfloor{r_1t}} \cdots g_n^{\Myfloor{r_nt}}\|
\\
& \leq
s \cdot \| [g] \|_{\tC(G,T)}.
\end{align*}
\end{proof}

Recall the continuous map $h \colon [0,\infty) \times \tD(G,T) \to \tD(G,T)$ in Proposition \ref{P:extend psi_t to D}.


\begin{lemma}\label{L:h-t preserves kerel}
Suppose that $T$ is ample.
Let $K$ denote the kernel of $\hat{\pi} \colon \tD(G,T) \to \tC(G,T)$.
Then $h_s(K) \leq K$ for any $s \geq 0$.
\end{lemma}

\begin{proof}
We suppress $T$ from the notation.
When $s=0$ there is nothing to prove since $h_s$ is the trivial homomorphism. 
So we assume $s>0$.

Consider some $[u] \in K$ represented by $u \in \C D(G)$.
By Proposition \ref{P:properties Psi iota} the image of $\iota$ is dense in $\tD(G)$.
Choose a sequence $\alpha_k \in \Free_\RR(G)$ such that $\lim_k \iota(\alpha_k)=[u]$.
Since $\hat{\pi}$ is continuous,
\[
\lim_k \hat{\pi}(\iota(\alpha_k))=\hat{\pi}([u])=1.
\] 
(The limit is in $\tC(G)$).
Denote $[w]=h_s([u])$ for some $w \in \C D(G)$.
Our goal is to show that $\hat{\pi}([w])$ is the trivial element in $\tC(G)$.

Denote $[v_k]=\iota(\psi_s(\alpha_k))$ for some $v_k \in \C D(G)$.
Then by Proposition \ref{P:extend psi_t to D}
\[
[w]=h_s([u])=h_s(\lim_k \iota(\alpha_k)) = \lim_k \iota(\psi_s(\alpha_k))= \lim_k [v_k].
\]
(the limit is in $\tD(G)$).
Since $\hat{\pi}$ is continuous, 
\[
\lim_k \hat{\pi}([v_k]) = \hat{\pi}([w]).
\]
(The limit is in $\tC(G)$).
By definition $\hat{\pi}([w])=[\pi \circ w]$.
By the triangle inequality and the subadditivity of $\limsup$, for any $k \geq 1$
\[
\limsup_{t \in T} \tfrac{1}{t} \| \pi(w(t))\| \leq
\limsup_{t \in T} \tfrac{1}{t}\| \pi(v_k(t)^{-1}w(t))\| + \limsup_{t \in T} \tfrac{1}{t} \|\pi(v_k(t))\|.
\]
By definition $\hat{\pi}([v_k]^{-1}[w])$ is represented by the function $\pi(v_k(t)^{-1}w(t))$ so by Lemma \ref{L:limsup<norm} the first term on the right hand side is estimated by
\[
\limsup_{t \in T} \tfrac{1}{t}\| \pi(v_k(t)^{-1}w(t))\| \leq \|\hat{\pi}([v_k]^{-1}[w])\|_{\tC(G)} \xto{k \to \infty} 0.
\]
For the second term, notice that $\pi \circ v_k$ represents $\hat{\pi}([v_k]) = \hat{\pi}(\iota(\psi_s(\alpha_k)))$ so Lemma \ref{L:h-t preserves kerel limsup lemma} yields
\[
\limsup_{t \in T} \tfrac{1}{t} \|\pi(v_k(t))\| \leq s \cdot \| \hat{\pi}(\iota(\alpha_k))\|_{\tC(G)} \xto{k \to \infty} 0.
\]
Hence, $\limsup_{t \in T} \tfrac{1}{t} \| \pi(w(t))\| =0$ and  Lemma \ref{L:characterize norm 0 in C} shows that $\hat{\pi}([w])=1$.
\end{proof}

\begin{theorem}\label{T:tC contractible}
Suppose that $T$ is ample.
There exists a continuous function $\hat{h} \colon [0,\infty) \times \tC(G,T) \to \tC(G,T)$ rendering the following diagram commutative
\[
\xymatrix{
[0,\infty) \times \tD(G,T) \ar[r]^(0.6){h} \ar[d]_{[0,\infty) \times \hat{\pi}} &
\tD(G,T) \ar[d]^{\hat{\pi}}
\\
[0,\infty) \times \tC(G,T) \ar[r]_(0.6){\hat{h}} & 
\tC(G,T).
}
\]
It has the property that $\hat{h}_t \colon \tC(G,T) \to \tC(G,T)$ are homomorphisms such that $\|\hat{h}_t(x)\|_{\tC(G)} = t \cdot \|x\|_{\tC(G)}$ and $\hat{h}_1=\OP{id}$ and $\hat{h}_0$ is the trivial homomorphism and $\hat{h}_t \circ \hat{h}_s = \hat{h}_{ts}$.
In particular, $\tC(G,T)$ is contractible and $\OP{End}(\tC(G,T))$, see Definition \ref{def:End(G)}, is contractible.
\end{theorem}

\begin{proof}
We suppress $T$ from the notation.
By Lemma \ref{L:h-t preserves kerel}, for every $s \geq 0$ there is a homomorphism $\hat{h}_s \colon \tC(G) \to \tC(G)$ such that the following square commutes
\[
\xymatrix{
\tD(G) \ar[r]^{h_s} \ar[d]_{\hat{\pi}} &
\tD(G) \ar[d]^{\hat{\pi}} 
\\
\tC(G) \ar[r]_{\hat{h}_s} &
\tC(G).
}
\]
We obtain a function $\hat{h}$ as in the statement of the theorem which renders the square in its statement commutative.
By Proposition \ref{P:extend psi_t to D} $\|h_t(x)\|_{\tD(G)}=t \cdot \|x\|_{\tD(G)}$ for all $x \in \tD(G)$.
Since $\hat{\pi}$ is a metric quotient, it follows that $\|\hat{h}_t(x)\|_{\tC(G)} \leq t \cdot \|x\|_{\tC(G)}$ for all $x \in \tC(G)$, namely $\hat{h}_t$ is Lipschitz with constant $t$.
This also shows that $\hat{h}_x \colon [0,\infty) \to [0,\infty)$ is Lipschitz for any $x \in \tC(G)$, hence continuous.
Apply Lemma \ref{L:extend homotpy from dense subspace} with $A=X=Y=\tC(G)$ and $f=\hat{h}$ to deduce that $\hat{h}$ is continuous.

Since $\hat{\pi}$ is surjective, $\hat{h}_t$ rendering the diagram commutative is unique and therefore $\hat{h}_1=\OP{id}$ and $\hat{h}_0$ is trivial and $\hat{h}_t \circ \hat{h}_s=\hat{h}_{ts}$, because $h_t$ has these properties.
In particular $\hat{h}$ gives a homotopy from $\hat{h}_1=\OP{id}_{\tC(G)}$ and the trivial homomorphism $\hat{h}_0$, so $\tC(G)$ is contractible.
Since $\hat{h}_t$ are homomorphism, this contraction is a homotopy in the category of groups, so $\OP{End}(\tC(G))$ with the compact-open topology is contractible as well.
\end{proof}

\section{Algebraic properties of the directional asymptotic cone}

\noindent
{\bf Running assumptions:}
All group $G,H,\dots$ are equipped with a conjugation-invariant norm.
Throughout $T,T',\dots$ denote scaling sets.

\begin{prop}\label{P:cone of abelian is abelian}
If $G$ is abelian then $\tC(G,T)$ is abelian.
More generally, if $G$ is nilpotent of class $n$ then so is $\tC(G,T)$.
\end{prop}

\begin{proof}
The product in $\C C(G,T)$ is induced by the point-wise product of functions $f \colon T \to G$ and $\tC(G,T)$ is a quotient.
\end{proof}

Recall that $g \in G$ is called {\bf distorted} if its stable length vanishes, i.e., $\tau(g)=0$, see paragraph \ref{V:stable length}.

\begin{prop}\label{P:distorted => trivial cone}
If $G$ is bounded then $\tC(G,T)=1$.
More generally, if every element in $G$ is distorted then $\tC(G,T)=1$
(cf.  Example \ref{E:symmetric8}).
\end{prop}

\begin{proof}
Recall that $\hat{\pi} \circ \iota \colon \Free_\RR(G) \to \tC(G,T)$ is Lipschitz with dense image, see Proposition \ref{P:properties Psi iota} and paragraph \ref{V:hat pi construction}.
It remains to show its image is trivial, which is the case if the image of every generator $\alpha=\overline{g}(r) \in \Free_\RR(G)$ is trivial.
Now $[h]:=\hat{\pi}(\iota(\alpha))$ is represented by $h \colon T \to G$ where $h(t)=g^{\Myfloor{rt}}$.
Then
\[
\limsup_{t \in T} \tfrac{1}{t}\|h(t)\| = 
\limsup_{t \in T} \tfrac{|\Myfloor{rt}|}{t} \cdot \tfrac{1}{|\Myfloor{rt}|} \| g^{\Myfloor{rt}}\| =
|r| \cdot \tau(g) = 0
\]
because $T$ is unbounded.
Lemma \ref{L:characterize norm 0 in C} implies that $[h]=1$. 
\end{proof}

Let $G$ and $H$ be (pseudo) normed groups.
The associated $L^1$ (pseudo) norm on $G \times H$ is defined by $\|g \times h\|_{G \times H} := \|g\|_G+\|h\|_H$.
Clearly if $\| \ \|_G$ and $\| \ \|_H$ are conjugation-invariant then so is $\| \ \|_{G \times H}$.

\begin{theorem}\label{T:tC products}
Equip $G \times H$ with the associated $L^1$ norm.
Then for any scaling set $T$ there is a natural isometry of groups
\[
\tC(G \times H, T) \xrightarrow[\cong]{ \ [g \times h] \mapsto [g] \times [h] \ } \tC(G,T) \times \tC(H,T)
\]
where the codomain is equipped with the associated $L^1$ norm.
\end{theorem}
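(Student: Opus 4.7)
The plan is to construct mutually inverse Lipschitz maps $\Phi$ and $\Psi$, each of Lipschitz constant $1$ (in the $L^1$ sense on $\tC(G,T) \times \tC(H,T)$), from which it will follow that each is an isometry and that the canonical map in the statement is an isometry. The forward map comes from functoriality. The projections $p_G \colon G\times H \to G$ and $p_H \colon G \times H \to H$ are Lipschitz homomorphisms with constant $1$; in fact the $L^1$ norm satisfies $\|p_G(a,b)\|_G + \|p_H(a,b)\|_H = \|(a,b)\|_{G \times H}$. Proposition \ref{P:naturality C and tC} then yields a homomorphism
\[
\Phi = ((p_G)_*, (p_H)_*) \colon \tC(G \times H, T) \to \tC(G, T) \times \tC(H, T),
\]
sending $[g \times h]$ to $([g], [h])$.

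To bound $\Phi$, pick a lift $w \in \C D(G\times H, T)$ of a representative $f \in \C C(G \times H, T)$; set $w_G = (p_G)_* \circ w$ and $w_H = (p_H)_* \circ w$, which lift the two coordinate projections of $f$. The key step is the pointwise inequality
\[
\|w_G(t)\|_{\Free(G)} + \|w_H(t)\|_{\Free(H)} \leq \|w(t)\|_{\Free(G \times H)},
\]
which I would prove directly from the word norm description in Section \ref{V:word norms}: any near-optimal presentation $w(t) = \prod_i h_i \overline{(a_i, b_i)}^{\pm 1} h_i^{-1}$ as a product of conjugates of generators projects under $(p_G)_*$ and $(p_H)_*$ to presentations of $w_G(t)$ and $w_H(t)$, and the $L^1$ identity $\|(a_i, b_i)\|_{G \times H} = \|a_i\|_G + \|b_i\|_H$ together with the triangle inequality yields the claim upon passing to the infimum. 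Dividing by $t$, taking the limit in $\C D$, and finally the infimum over lifts, one gets $\|\Phi([f])\|_{L^1} \leq \|[f]\|_{\tC(G \times H)}$.

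For the inverse, use the inclusions $i_G \colon G \hookrightarrow G \times H$ and $i_H \colon H \hookrightarrow G \times H$, which are isometric (hence proto-Lipschitz with constant $1$). Given lifts $w_G, w_H$ of $g, h$, set
\[
w(t) = (i_G)_*(w_G(t)) \cdot (i_H)_*(w_H(t)) \in \Free(G \times H);
\]
then $\pi \circ w = g \times h$. Since $(i_G)_*$ and $(i_H)_*$ are Lipschitz with constant $1$ and commute with $\psi_{1/t}$ by Proposition \ref{P:psit commutes with fstar}, the map $t \mapsto \psi_{1/t}(w(t))$ is a product of two Cauchy functions in the topological group $\Free_\RR(G \times H)$, hence Cauchy, so $w \in \C D(G\times H, T)$. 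The triangle inequality and the Lipschitz property give $\|w(t)\|_{\Free(G \times H)} \leq \|w_G(t)\|_{\Free(G)} + \|w_H(t)\|_{\Free(H)}$, so $\|[g \times h]\|_{\tC(G \times H)} \leq \|[g]\|_{\tC(G)} + \|[h]\|_{\tC(H)}$. Setting $\Psi([g], [h]) = [g \times h]$ (well-defined via Lemma \ref{L:characterize norm 0 in C}) thus gives a map which is Lipschitz with constant $1$ from the $L^1$ side and satisfies $\Phi \circ \Psi = \Psi \circ \Phi = \OP{id}$ at the level of representatives. The two bounds then force both maps to be isometries. Naturality in $G$ and $H$ is inherited from the functoriality of $\tC(-,T)$ and the naturality of projections. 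The only genuine obstacle is the word-norm inequality in the second paragraph; everything else is formal bookkeeping.
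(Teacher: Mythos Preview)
Your proposal is correct and follows essentially the same strategy as the paper: the forward map comes from the projections, the inverse from the inclusions, and mutual Lipschitz-$1$ inverses force an isometry. The paper routes the estimates through the $\Free_\RR$ level and the density of $\iota(\Free_\RR(\,\cdot\,))$ in $\tD(\,\cdot\,)$, then descends via the metric quotient $\hat{\pi}$, whereas you work directly at the $\Free$/$\C D$ level with explicit lifts; your route is a bit more elementary and, in particular, gets injectivity for free from $\Psi\circ\Phi=\OP{id}$, where the paper invokes Lemma~\ref{L:characterize norm 0 in C} separately.
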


\begin{proof}
Let $\lambda^G \colon G \times H \to G$ and $\lambda^H \colon G \times H \to H$ denote the canonical projections.
Let $i^G \colon G \to G \times H$ and $i^H \colon H \to G \times H$ be the canonical inclusions.
They are clearly Lipschitz with constant $1$.
By Proposition \ref{P:naturality C and tC} they induce homomorphisms  $\lambda^G_*:=\tC(\lambda^G)$ and $\lambda^H_*:=\tC(\lambda^H)$ and $i^G_*:=\tC(i^G)$ and $i^H_*:=\tC(i^H)$, all are Lipschitz with constant $1$.
By the description of $i^G_*$ and $i^H_*$ in Proposition \ref{P:naturality C and tC} and since $G$ and $H$ commute in $G \times H$, it follows that their images in $\tC(G \times H)$ commute.
There results a homomorphism
\[
i \colon \tC(G) \times \tC(H) \to \tC(G \times H), \qquad
i([g],[h]) = i_G^*([g]) \cdot i^H_*([h]).
\]
Then $i$ is Lipschitz with constant $1$ because the metric on the domain is the $L^1$ metric so
\begin{multline*}
\|i([g],[h])\|_{\tC(G \times H)} = \|i^G_*([g]) \cdot i^H_*([h])\| \leq \|i^G_*([g])\|+\|i^H_*([h])\| 
 \\
\leq \|[g]\|+\|[h]\|= \|([g],[h])\|_{\tC(G) \times \tC(H)}.
\end{multline*}
Use $\lambda^G_*$ and $\lambda^H_*$ to define a homomorphism
\[
\Lambda= \colon \tC(G \times H) \xto{(\lambda^G_*,\lambda^H_*)} \tC(G) \times \tC(H).
\]
By the description of the homomorphisms in Proposition \ref{P:naturality C and tC} it is clear that $\Lambda$ and $i$ are inverses of each other because for any $[g] \in \tC(G)$ and $[h] \in \tC(H)$ represented by $g \colon T \to G$ and $h \colon T \to H$,
\[
\Lambda(i([g(t)],[h(t)])) = \Lambda([g(t)\times 1] \cdot [1 \times h(t)]) = ([g],[h]).
\]
This shows that $\Lambda \circ i$ is the identity.
Similarly, since any function $f \colon T \to G \times H$ is a pair of functions $g(t)$ and $h(t)$ it follows that $i \circ \Lambda$ is the identity.

The homomorphisms $\lambda^G, \lambda^H$ give rise to a homomorphism
\[
\lambda \colon \Free_\RR(G \times H) \to \Free_\RR(G) \times \Free_\RR(H)
\]
which on generators has the effect $\lambda(\overline{g \times h}(r)) = (\overline{g}(r),\overline{h}(r))$.
One easily checks that this assignment respects the relations \eqref{eqn:presentation of FSA} in Paragraph \ref{V:presentation FSA}, hence $\lambda$ is well defined.
It is Lipschitz with constant $1$ by Proposition \ref{P:norm-mu universally Lipschitz} because for a generators we get
\begin{multline*}
\| \lambda(\overline{g \times h}(r))\| = \|(\overline{g}(r),\overline{h}(r))\| = \| \overline{g}(r)\|+\|\overline{h}(r)\| 
\\
=
|r| \cdot (\|g\|+\|h\|) = |r| \cdot \|(g,h)\| = \| \overline{g \times h}(r)\|.
\end{multline*}
By checking generators of $\Free_\RR(G \times H)$ and using that $G,H$ commute in $G \times H$, it follows that the following square commutes
\[
\xymatrix{
\Free_\RR(G \times H) \ar[r]^{\lambda} \ar[d]_{\hat{\pi} \circ \iota} &
\Free_\RR(G) \times \Free_\RR(H) \ar[d]^{(\hat{\pi} \circ \iota, \hat{\pi} \circ \iota)}
\\
\tC(G \times H) \ar[r]_\Lambda & 
\tC(G) \times \tC(H).
}
\]
Upon completion we get a homomorphism $\widehat{\lambda} \colon \widehat{\Free_\RR(G \times H)} \to \widehat{\Free_\RR(G)} \times \widehat{\Free_\RR(H)}$, Lipschitz with constant $1$.
By Proposition \ref{P:properties Psi iota} and Paragraph \ref{V:hat pi construction} we get the commutative diagram
\[
\xymatrix{
\widehat{\Free_\RR(G \times H)} \ar[r]^{\widehat{\lambda}} \ar[d]_{\hat{\pi}} &
\widehat{\Free_\RR(G)} \times \widehat{\Free_\RR(H)} \ar[d]^{(\hat{\pi}, \hat{\pi})}
\\
\tC(G \times H) \ar[r]_\Lambda & 
\tC(G) \times \tC(H).
}
\]
where the vertical arrows are metric quotient homomorphisms.
Then $\Lambda$ is Lipschitz with constant $1$ by Proposition \ref{P:quotient Lipschitz maps}.
Since its inverse $i$ is also Lipschitz with constant $1$ both of them are isometries.
\end{proof}

\begin{theorem}\label{T:tC quotient with section}
Let $f \colon G \to H$ be a surjective homomorphism, Lipschitz with constant $C$.
Assume further that
\begin{enumerate}[label=(\alph*)]
\item
$\OP{Ker}(f)$ is a bounded subgroup of $G$.
\item
$f$ admits a set-theoretic section $s \colon H \to G$ (not necessarily a homomorphism) such that there exists $C'$ such that $\|s(h)\|_G \leq C' \|h\|$ for all $h \in H$.
\end{enumerate}
Then $\tC(f,T) \colon \tC(G,T) \to \tC(H,T)$ is an isomorphism, Lipschitz with constant $C$, whose inverse is the homomorphism
\[
s_! \colon \tC(H,T) \xto{ [h] \mapsto [s \circ h] } \tC(G,T)
\]
which is Lipschitz with constant $C'$.
\end{theorem}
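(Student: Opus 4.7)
The first assertion---that $\tC(f,T)$ is a homomorphism Lipschitz with constant $C$---is immediate from Proposition~\ref{P:naturality C and tC} applied to $f$. The real content is to define $s_!$ and to show it is a two-sided inverse of $\tC(f,T)$, Lipschitz with constant $C'$. The difficulty is that $s$ is not a homomorphism, so it does not induce a map on $\tC$ by functoriality; my plan is to repair every step using the observation that any discrepancy landing in $\OP{Ker}(f)$ is bounded, hence asymptotically invisible in $\tC(G,T)$ by Lemma~\ref{L:characterize norm 0 in C}.

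The key construction of $s_!$ goes as follows. The hypothesis $\|s(h)\|_G \leq C'\|h\|_H$ says exactly that $s$ is proto-Lipschitz with constant $C'$, so by Proposition~\ref{P:naturality D and tD} we obtain an induced Lipschitz homomorphism $\C D(s) \colon \C D(H,T) \to \C D(G,T)$ with constant $C'$, sending $w$ to $s_* \circ w$. Given $h \in \C C(H,T)$, choose any lift $w \in \C D(H,T)$ with $\pi_* w = h$ and set $v = s_* \circ w \in \C D(G,T)$. I claim that $s \circ h \in \C C(G,T)$ and $[s \circ h] = \hat{\pi}([v])$ in $\tC(G,T)$. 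Indeed, the pointwise ratio $\gamma(t) := s(h(t)) \cdot \pi(v(t))^{-1}$ satisfies $f(\gamma(t)) = 1$ by combining $f \circ s = \id_H$ with $f_* \circ s_* = \id_{\Free(H)}$ and the naturality square~\eqref{E:pi is natural transformation}. Hence $\gamma$ takes values in the bounded subgroup $\OP{Ker}(f)$, so Lemma~\ref{L:characterize norm 0 in C} gives $\gamma \in \C C(G,T)$ with $\|\gamma\|_{\C C(G,T)} = 0$. Writing $s \circ h = \gamma \cdot (\pi_* v)$ then simultaneously shows $s \circ h \in \C C(G,T)$ and $[s \circ h] = \hat{\pi}([v])$. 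Taking the infimum over lifts $[w] \in \tD(H,T)$ of $[h]$ and using that $\hat{\pi}$ is a metric quotient (Section~\ref{V:hat pi construction}) yields $\|s_!([h])\|_{\tC(G,T)} \leq C' \|[h]\|_{\tC(H,T)}$.

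It remains to check that $s_!$ is well-defined on $\tC(H,T)$, is multiplicative, and is the inverse of $\tC(f,T)$. All three follow the same pattern: any failure of $s$ to be a homomorphism is absorbed into $\OP{Ker}(f)$. If $[h] = [h']$ in $\tC(H,T)$, then $s(h(t)) s(h'(t))^{-1}$ and $s(h(t) h'(t)^{-1})$ are both preimages of $h(t) h'(t)^{-1}$ under $f$, so they differ by a bounded element; combined with $\lim_{t \in T} \tfrac{1}{t} \|h(t) h'(t)^{-1}\| = 0$ and the proto-Lipschitz estimate for $s$, this gives $[s \circ h] = [s \circ h']$ by Lemma~\ref{L:characterize norm 0 in C}. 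The same mechanism shows $s_!([h][h']) = s_!([h]) s_!([h'])$, since $s(h_1(t) h_2(t))$ and $s(h_1(t)) s(h_2(t))$ again differ by an element of the bounded group $\OP{Ker}(f)$. Finally, $\tC(f,T) \circ s_! = \id$ is immediate from $f \circ s = \id_H$, and for the reverse direction $s_! \circ \tC(f,T)([g]) = [s \circ f \circ g]$; here the function $t \mapsto s(f(g(t))) g(t)^{-1}$ lies in $\OP{Ker}(f)$ pointwise, hence represents the trivial class, so $[s \circ f \circ g] = [g]$. This gives the inverse isomorphism with the claimed Lipschitz constants; the main obstacle is really psychological, namely that one must use bounded-kernel corrections at each step to work around the failure of $s$ to be a homomorphism.
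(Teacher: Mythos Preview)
Your proof is correct and follows essentially the same approach as the paper: both construct $s_!$ by lifting through $\C D(s)$ (equivalently $\tD(s)$), use the naturality square~\eqref{E:pi is natural transformation} together with $f\circ s=\id$ to see that every discrepancy lands in the bounded kernel, and then invoke Lemma~\ref{L:characterize norm 0 in C} repeatedly to kill these corrections and verify well-definedness, multiplicativity, the Lipschitz bound, and the two inverse identities. The only cosmetic difference is that the paper packages the Lipschitz estimate via the commutative square $\hat\pi\circ\tD(s)=s_!\circ\hat\pi$ and Proposition~\ref{P:quotient Lipschitz maps}, whereas you unwind that argument directly by taking the infimum over lifts.
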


\begin{proof}
We will suppress $T$ from the notation and write $K$ for $\OP{Ker}(f)$.

First, if $k \colon T \to G$ is a function with image in $K$ then by Lemma \ref{L:characterize norm 0 in C}  $k \in \C C(G)$ and $[k]=1 \in \tC(G)$ because
\[
\limsup_{t \in T} \tfrac{1}{t} \|k(t)\| \leq \limsup_{t \in T} \, \tfrac{\OP{diam}_G(K)}{t} = 0.
\]
Therefore, if $g \colon T \to G$ is in $\C C(G)$ then $g \cdot k \in \C C(G)$ and $[g]=[g \cdot k]$ in $\tC(G)$.

By Proposition \ref{P:naturality FRA} $f$ and $s$ induce homomorphisms $f_* \colon \Free(G) \to \Free(H)$ and $s_* \colon \Free(H) \to \Free(G)$, Lipschitz with constants $C$ and $C'$.
Recall that $\pi^G \colon \Free(G) \to G$ and $\pi^H \colon \Free(H) \to H$ are the canonical homomorphisms.
Since $f \circ s=\OP{Id}$ it follows that $f_* \circ s_*=\OP{Id}$, so \eqref{E:pi is natural transformation} implies that for any $w \in \Free(H)$
\[
f(\pi^G(s_*(w))) = \pi^H(f_*(s_*(w)) = \pi^H(w) = f(s(\pi^H(w))).
\]
We deduce that for every $w \in \Free(H)$
\[
\text{$\pi^G(s_*(w))$ and $s(\pi^H(w))$ differ by an element of $K$.}
\]
By Proposition \ref{P:naturality C and tC} $f$ induces $\tC(f) \colon \tC(G) \to \tC(H)$, Lipschitz with constant $C$ given by $\tC(f)([g])=[f \circ g]$.
A-priori a similar $\tC(s)$ does not exit since $s$ is not a homomorphism.
By Proposition \ref{P:naturality D and tD} $s$ induces  $\tD(s) \colon \tD(H) \to \tD(G)$, Lipschitz with constant $C'$, given by $\tD(s)([w])=[s_* \circ w]$.
We obtain the following solid diagram
\[
\xymatrix{
\tD(H) \ar[r]^{\tD(s)} \ar@{->>}[d]_{\hat{\pi}^H} &
\tD(G) \ar@{->>}[d]^{\hat{\pi}^G} 
\\
\tC(H) \ar@{..>}[r]^{s_!} &
\tC(G).
}
\]
Our next goal is to prove that there is a homomorphism $s_!$ of the form given in the statement of the theorem that renders the square commutative.
Choose some $[h] \in \tC(H)$ represented by $h \colon T \to H$ in $\C C(H)$.
Choose some $w \in \C D(H)$ which lifts $h$, namely $w \colon T \to \Free(H)$ is in $\C D(H)$ and $h= \pi^H_*(w) = \pi^H \circ w$.
Set
\[
s_!([h]) \, \overset{\text{def}}{=} \, \hat{\pi}^G(\tD(s)([w]))=\hat{\pi}^G([s_* \circ w])=[\pi^G \circ s_* \circ w].
\]
We have observed above that $\pi^G \circ s_* \circ w$ and $s \circ \pi^H \circ w$ differ by some $k \colon T \to G$ with values in $K$ and therefore they represent the same element in $\tC(G)$, i.e.,
\[
s_!([h]) = [s \circ \pi^H \circ w] = [s \circ h].
\]
In particular $s_!([h])$ is independent of the choice of $w$.
In order to prove that $s_!$ is independent of the choice of representative $h \colon T \to H$, observe that 
\[
\text{$s(h_1)s(h_2)$ and $s(h_1h_2)$ differ by an element of $K$ for any $h_1,h_2 \in H$.}
\]
If $h' \colon T \to H$ and $h \colon T \to H$ represent the same element in $\tC(H)$ then $h'=h \cdot \delta$ for some $\delta \colon T \to H$ which represents the trivial element in $\tC(H)$.
By Lemma \ref{L:characterize norm 0 in C} this is equivalent to $\limsup_{t \in T} \tfrac{\|\delta(t)\|}{t}=0$.
Then for any $t \in T$
\[
s(h'(t))=s(h(t)\cdot \delta(t)) = s(h(t)) \cdot s(\delta(t)) \cdot k(t),
\]
for some $k \colon T \to K \subseteq G$.
We have seen that $[k]=1$.
Since $s$ is proto-Lipschitz, $\limsup_{t \in T} \tfrac{\|s(\delta(t))\|}{t} \leq \limsup_{t \in T} C'\tfrac{\|\delta(t)\|}{t} =0$, hence $[s \circ \delta]=1$.
We deduce that $[s \circ h']=[s \circ h]$, so $s_!$ is independent of representatives, and is therefore well defined.
It is a homomorphism because given $[h_1],[h_2] \in \tC(H)$ observe that for any $t \in T$
\[
s(h_1(t) \cdot h_2(t)) = s(h_1(t)) \cdot s(h_2(t)) \cdot k(t),
\]
for some $k \colon T \to K \subseteq G$, so 
\[
s_!([h_1]\cdot [h_2]) = [s \circ (h_1 \cdot h_2)] = [(s \circ h_1) \cdot (s \circ h_2) \cdot k] = s_!([h_1]) \cdot s_!([h_2]).
\]
Clearly, by its definition $s_!$ renders the square above commutative.
Since $\hat{\pi}^H$ and $\hat{\pi}^G$ are metric quotient maps and $\tD(s)$ is Lipschitz with constant $C'$, Proposition \ref{P:quotient Lipschitz maps} shows that $s_!$ is Lipschitz with constant $C'$.

Since $f \circ s=\OP{Id}$, for any $[h] \in \tC(H)$
\[
\tC(f) (s_!([h])) = [f \circ s \circ h] = [h].
\]
Hence, $\tC(f) \circ s_!=\OP{Id}$ and in particular $s_!$ injective.
Given $[g] \in \tC(G)$ set $[h]=\tC(f)([g])=[f \circ g]$.
Since $s$ is a section, $s \circ f \circ g$ differs from $g$ by some $k \colon T \to K \subseteq G$ and in particular $[s \circ f \circ g]=[g]$.
Then
\[
s_!([h]) = [s \circ f \circ g] = [g].
\]
This shows that $s_!$ is surjective.
Hence it is an isomorphism and the same is true for $\tC(f)$.
\end{proof}

\begin{cor}\label{cor:metric quotient bounded kernel}
Let $\pi \colon G \to H$ be a metric quotient with bounded kernel.
Then $\tC(\pi) \colon \tC(G,T) \to \tC(H,T)$ is an isometry. 
\end{cor}

\begin{proof}
Since $\pi$ is a metric quotient (Definition \ref{VD:metric quotient}) it is Lipschitz with constant $1$ and, given $\epsilon>0$ one can choose a section $s \colon H \to G$ such that $\| h\|_H \leq \|s(h)\|_G \leq \|h\|_H (1+\epsilon)$.
 By Theorem \ref{T:tC quotient with section} $\tC(\pi) \colon \tC(G,T) \to \tC(H,T)$ is bijective, Lipschitz with constant $1$ and its inverse $s_!$ is Lipschitz with constant $(1+\epsilon)$.
Since $\epsilon$ was arbitrary, $s_!$ is Lipschitz with constant $1$, hence $\tC(\pi)$ is an isometry.
\end{proof}

The next result deals with the directional asymptotic cones of subgroups of $G$ which normally generated by words in $G$ of a fixed pattern.
Examples include the commutator subgroup of $G$, or the subgroups $\Gamma^n G$ of the upper central series.

Let $w \in \Free(X)$ be thought of as a word in the alphabet $X$.
Given an assignment of elements of $G$ to the letters in $X$, i.e., a function $\alpha \colon X \to G$, let $w(\alpha) \in G$ denote the evaluation of the word $w$ at the assignment $\alpha$.
Let $w(G)$ denote the subset of $G$ of all such evaluations.
Notice that if $N \nsg G$ then $w(N) \leq N$ so $w(G/N) = w(G)N/N$.

\begin{prop}\label{P:words preserved by cone}
Consider some $w \in \Free(X)$.
Let $H \leq G$ be the normal subgroup generated by $w(G)$.
Equip $w(G)$ with a {\em bounded} length function $\mu$ and equip $H$ with the associated conjugation-invariant pseudonorm in Section \ref{V:word norms}. 
Then $w(\tC(H,T))=1$ for any scaling set $T$.
\end{prop}

\begin{proof}
We suppress $T$ from the notation.
Say $\mu$ is bounded by $C$.

Consider a function $\alpha \colon X \to \tC(H)$.
Then $\alpha(x)=[h(x)]$ for some $h(x) \colon T \to H$ in $\C C(H)$.
Thus $\alpha \in (H^T)^X$ and it corresponds to $\alpha' \in (H^X)^T$ where $\alpha'(t)(x) = \alpha(x)(t)$.
By inspection $w(\alpha) \in \tC(H)$ is represented by the function $k \colon T \to H$ defined by $k(t)=w(\alpha'(t))$.
Since $w(\alpha'(t))$ is a generator of $H$ and since $\mu$ is bounded, it follows that $\|k(t)\| \leq \mu(w(\alpha'(t))) \leq C$.
Therefore $\limsup_{t \in T} \tfrac{\|k(t)\|}{t}=0$ and by Lemma \ref{L:characterize norm 0 in C} $w(\alpha)=[k]=1$.
Since $\alpha$ was arbitrary, $w(\tC(H))=1$.
\end{proof}

\section{Independence of scaling}
\label{section:indepence of scaling}

\noindent
{\bf Running assumptions:}
All groups $G,H,\dots$ are equipped with a conjugation-invariant norm.
Throughout $T,T',\dots$ denote scaling sets.

\begin{defn}\label{def:scaling independence}
A group $G$  is called {\bf independent of scaling} if for any inclusion $T \subseteq T'$ of scaling sets, $\widehat{\res}^{T'}_T \colon \tC(G,T') \to \tC(G,T)$ is an isometry.
\end{defn}

\medskip
\noindent
{\bf Notation:}
For the remainder of the section, when we write $\tC(G)$ we mean $\tC(G,(0,\infty))$, i.e., the full scaling set.

It is clear that if $T \subseteq T' \subseteq T''$ are scaling sets then $\widehat{\res}^{T'}_T\circ\widehat{\res}^{T''}_{T'} = \widehat{\res}^{T''}_T$.
Applying this to the maximal scaling set $T''=(0,\infty)$ we get: 

\begin{prop}\label{prop:scaling independence equivalent}
A group $G$ is independent of scaling if and only if the homomorphisms $\widehat{\res}_{T} \colon \tC(G) \to \tC(G,T)$ are isometries for all scaling sets $T$. 
\end{prop}

\begin{void}\label{paragraph:lim g(t)/t independent of representative}
Let $[g] \in \tC(G,T)$.
If $\lim_{t \in T} \tfrac{\|g(t)\|}{t}$ exists then it is independent of the representative because by Lemma \ref{L:characterize norm 0 in C} $g,h \colon T \to G$ represent the same element in $\tC(G,T)$ if and only if $\limsup_{t \in T} \tfrac{\|g(t)^{-1}h(t)\|}{t}=0$.
\end{void}

\begin{lemma}\label{L:lim exists => independence scaling}
If $\lim_t \tfrac{\|g(t)\|}{t}$ exists for any $[g] \in \tC(G)$ then $G$ is independent of scaling.
\end{lemma}

\begin{proof}
By Propositions \ref{prop:scaling independence equivalent} and \ref{P:restriction injective => isometry} we need to show that $\widehat{\res}_T \colon \tC(G) \to \tC(G,T)$ are injective.
If $[g] \in \tC(G)$ is in the kernel, then by Lemma \ref{L:characterize norm 0 in C} $\lim_{t \in T} \tfrac{\|g(t)\|}{t}=0$, and since $T \subseteq (0,\infty)$ is unbounded $\lim_{t >0} \tfrac{\|g(t)\|}{t}=0$, so $[g]=1$.
\end{proof}

Recall the homomorphisms $\hat{\pi} \colon \tD(G,T) \to \tC(G,T)$ and $\iota \colon \Free_\RR(G) \to \tD(G,T)$ from Section \ref{V:hat pi construction} and Definition~\ref{D:iota FR(A)-->tD(A)}.

\begin{lemma}\label{L:limit g(t) exists if on FQG}
Suppose that $\lim_{t >0} \tfrac{\|h(t)\|}{t}$ exists for any $[h] \in \tC(G)$ 
in the image of $\Free_\QQ(G)$ under $\hat{\pi} \circ \iota$.
Then $\lim_{t>0} \tfrac{\|g(t)\|}{t}$ exists for all $[g] \in \tC(G)$.
If, in addition, for every $[h]$ of the form above
\[
\lim_{t >0} \tfrac{\|h(t)\|}{t}=\|[h]\|_{\tC(G)},
\]
then for every $[g] \in \tC(G)$ 
\[
\| [g] \|_{\tC(G)} = \lim_{t>0} \tfrac{\|g(t)\|}{t}
\]
\end{lemma}

\begin{proof}
We need to show that $\tfrac{\|g(t)\|}{t}$ is Cauchy.
Choose $\epsilon>0$.
Since $\iota(\Free_\RR(G))$ is dense in $\tC(G)$ by Proposition \ref{P:properties Psi iota} and $\Free_\QQ(G)$ is dense in $\Free_\RR(G)$ by Proposition \ref{prop:FSA dense FRA}  and $\hat{\pi}$ is Lipschitz, we can choose $\alpha \in \Free_\QQ(G)$ such that $[h]=\hat{\pi}(\iota(\alpha))$ is at distance $<\tfrac{\epsilon}{2}$ from $[g]$.
That is, $\| [g]^{-1}\cdot[h]\|_{\tC(G)}<\tfrac{\epsilon}{2}$.
By Lemma \ref{L:limsup<norm} 
\[
\limsup_{t>0} \tfrac{| \, \|g(t)\|-\|h(t)\|\, |}{t} \leq 
\limsup_{t>0} \tfrac{|  \|g(t)^{-1}h(t)\| |}{t} \leq
\| [g]^{-1} [h]\|_{\tC(G)} < \tfrac{\epsilon}{2}.
\]
By hypothesis $L=\lim_{t>0} \tfrac{\| h(t)\|}{t}$ exists, so $|\tfrac{\|g(t)\|}{t}-L|<\tfrac{\epsilon}{2}$ for all $t \gg 0$.
In particular $\tfrac{\|g(t)\|}{t}$ is Cauchy, as needed.

Assume that $\lim_{t>0} \tfrac{\|h(t)\|}{t}=\|[h]\|_{\tC(G)}$ for all $[h] \in \tC(G)$ in the image of $\Free_\QQ(G)$.
Given $[g] \in \tC(G)$ choose a sequence $[h_n] \in \tC(G)$ in the image of $\Free_\QQ(G)$ such that $\| [g]^{-1} [h_n]\|_{\tC(G)}< \tfrac{1}{n}$.
It follows from the display above (and since we already know that $\tfrac{\|g(t)\|}{t}$ is Cauchy) that for any $n \geq 1$
\[
\left| \lim_{t>0} \tfrac{\|g(t)\|}{t}-\|[h_n]\|_{\tC(G)} \right| < \tfrac{1}{n}.
\]
Therefore $\lim_{t>0} \tfrac{\|g(t)\|}{t} = \lim_n \|[h_n]\|_{\tC(G)} = \| [g]\|_{\tC(G)}$. 
\end{proof}

Lemma \ref{lem:multivariate powers of elements} below gives a criterion for the first condition in Lemma \ref{L:limit g(t) exists if on FQG} and Theorem \ref{T:abelian independent of scaling} gives a condition for the second.

\begin{lemma}\label{lemma:images FQG in CG}
Any element of $\tC(G)$ in the image of $\Free_\QQ(G)$ under $\hat{\pi} \circ \iota$ is the image of an element $\alpha \in \Free_\QQ(G)$ of the form $\overline{g_1}(\tfrac{1}{m}) \cdots \overline{g_k}(\tfrac{1}{m})$ for some $g_1,\dots,g_k \in G$ and some integer $m>0$.
\end{lemma}

\begin{proof}
Suppose $[h] \in \tC(G)$ has the form $\hat{\pi}(\iota(\alpha'))$ where $\alpha' \in \Free_\QQ(G)$.
Then $\alpha'=\overline{g_1}(\tfrac{p_1}{m}) \cdots \overline{g_k}(\tfrac{p_k}{m})$ for some integers $p_i$ and $m>0$.
Set $\alpha=\overline{g_1^{p_1}}(\tfrac{1}{m}) \cdots \overline{g_k^{p_k}}(\tfrac{1}{m})$.
It follows from Lemma \ref{L:pihat powers}\ref{L:pihat powers:1} that
\[
[h]=\hat{\pi}(\iota(\alpha')) = 
\prod_{i=1}^n\hat{\pi}(\iota(\overline{g_i}(\tfrac{k_i}{m}))) = 
\prod_{i=1}^n\hat{\pi}(\iota(\overline{g_1^{k_i}}(\tfrac{1}{m}))) = 
\hat{\pi}(\iota(\alpha)).
\]
\end{proof}

\begin{lemma}\label{lem:multivariate powers of elements}
Suppose that for any $g_1,\dots,g_k \in G$ the limit
\[
\lim_{n \to \infty} \tfrac{1}{n} \|g_1^n \dots g_k^n\|
\] 
exists.
Then $\lim_{t >0} \tfrac{\|h(t)\|}{t}$ exists for any $[h] \in \tC(G)$ in the image of $\hat{\pi} \circ \iota \colon \Free_\QQ(G) \to \tC(G)$.
\end{lemma}

\begin{proof}
By Lemma \ref{lemma:images FQG in CG} $[h]=\hat{\pi}(\iota(\alpha))$ where $\alpha=\overline{g_1}(\tfrac{1}{m}) \cdots \overline{g_k}(\tfrac{1}{m})$, thus $h(t)=g_1^{\Myfloor{\tfrac{t}{m}}} \cdots g_k^{\Myfloor{\tfrac{t}{m}}}$.
Since $0 \leq t-m\Myfloor{\tfrac{t}{m}} < m$, 
\[
\lim_{t\to \infty} \tfrac{\| h(t)\| }{t} =
\lim_{t\to \infty} \tfrac{\Myfloor{\tfrac{t}{m}}}{t} \cdot  \tfrac{1}{\Myfloor{\tfrac{t}{m}}} \| g_1^{\Myfloor{\tfrac{t}{m}}} \cdots g_k^{\Myfloor{\tfrac{t}{m}}} \| =
\tfrac{1}{m} \lim_{n \to \infty} \tfrac{1}{n} \| g_1^n \cdots g_k^n\|
\]
and the limit exists by the hypothesis.
\end{proof}

\begin{defn}\label{def:locally bounded commutators}
A group $G$ has {\em locally bounded commutators} if the commutator subgroup of any finitely generated subgroup $H \leq G$ is bounded, i.e., $[H,H]$ is a bounded subgroup of $G$.
\end{defn}

\begin{example}\label{example:nilpotent loc bdd comm}
Let $G$ be a nilpotent group equipped with some conjugation-invariant norm. 
Then $G$ has locally bounded commutators.
Indeed, if $H \leq G$ is finitely generated, then $H$ is also nilpotent and it is shown in \cite[Theorem 5.H]{MR3426433} that $[H,H]$ is bounded. 
\end{example}

\begin{theorem}\label{T:abelian independent of scaling}
Suppose that $G$ has locally bounded commutators.
Then 
\begin{enumerate}[label=(\alph*)]
\item\label{item:T loc com:indep scaling}  
$G$ is independent of scaling.
\item\label{item:T loc com:limit}
 $\| [g]\|_{\tC(G)}=\lim_{t>0} \tfrac{\|g(t)\|}{t}$ for any $[g] \in \tC(G)$. 
\item\label{item:T loc com:abelian}
$\tC(G)$ is abelian.
\end{enumerate}
\end{theorem}

\begin{proof}
Consider some $[h]$ in the image of $\hat{\pi} \circ \iota \colon \Free_\QQ(G) \to \tC(G)$.
By Lemma \ref{lemma:images FQG in CG} $[h]=\hat{\pi}(\iota(\alpha))$ for some $\alpha \in \Free_\QQ(G)$ of the form $\alpha=\overline{g_1}(\tfrac{1}{m}) \cdots \overline{g_k}(\tfrac{1}{m})$.
Set $\alpha'=\overline{g_1 \cdots g_k}(\tfrac{1}{m}) \in \Free_\QQ(G)$ and $[h']=\hat{\pi}(\iota(\alpha'))$.
Let $H$ be the subgroup of $G$ generated by $g_1,\dots,g_k$.
Clearly, for any $t>0$ there exists $\gamma(t) \in [H,H]$ such that
\[
h(t) = g_1^{\Myfloor{t/m}} \cdots g_k^{\Myfloor{t/m}} = (g_1 \cdots g_k)^{\Myfloor{t/m}} \cdot \gamma(t) = h'(t) \cdot \gamma(t).
\]
By Lemma \ref{L:characterize norm 0 in C}  $\gamma$ represents the trivial element in $\tC(G)$ since by assumption $[H,H]$ is bounded in $G$.
Therefore $[h]=[h']$ in $\tC(G)$ and by Corollary \ref{cor:Cnorm image of generators in FRG}, 
\begin{multline*}
\lim_{t>0} \tfrac{1}{t} \|h(t)\| 
=
\lim_{t>0} \tfrac{1}{t} \|h'(t)\| 
=
\lim_{t>0} \tfrac{\Myfloor{t}}{t} \cdot \tfrac{1}{\Myfloor{t}} \cdot \|(g_1 \cdots g_k)^{\Myfloor{t}}\| 
\\
=
\lim_{n \to \infty} \tfrac{1}{n} \|(g_1 \cdots g_k)^n\| 
=
\tau(g_1\cdots g_k)
=
\| [h']\|_{\tC(G)}
=
\| [h]\|_{\tC(G)}.
\end{multline*}
The conditions of Lemma \ref{L:limit g(t) exists if on FQG} are fulfilled by $G$, hence \ref{item:T loc com:limit} holds, and by Lemma \ref{L:lim exists => independence scaling} $G$ is independent of scaling.

It remains to prove point \ref{item:T loc com:abelian}.
First, suppose that $[u],[v] \in \tC(G)$ are in the image of $\Free_\RR(G)$ under $\hat{\pi} \circ \iota$, see Section \ref{V:hat pi construction}.
Then $u(t)=h_1^{\Myfloor{r_1t}}\cdots h_k^{\Myfloor{r_kt}}$ and $v(t)=g_1^{\Myfloor{s_1t}}\cdots g_n^{\Myfloor{s_nt}}$ for some $h_i,g_i \in G$ and $r_i,s_i \in \RR$.
Let $K$ be the subgroup of $G$ generated by $h_i, g_i$ and let $z$ denote the commutator $[u,v]$.
Then $z(t) \in [K,K]$ for any $t>0$.
Since by the assumption $[K,K]$ is bounded in $G$, Lemma \ref{L:characterize norm 0 in C} shows that $[z]=1$ in $\tC(G)$.
Thus, $[u]\cdot [v] = [v] \cdot [u]$ for any $[u],[v] \in \tC(G)$ in the image of $\Free_\RR(G)$.

Let $x,y \in \tC(G)$ be arbitrary.
We will show that $xyx^{-1}y^{-1}=1$.
Choose some $\epsilon>0$.
By Section \ref{V:hat pi construction} there are $x',y' \in \tC(G)$ in the image of $\Free_\RR(G)$ such that $\|x^{-1}x'\|_{\tC(G)} < \epsilon$ and $\|y^{-1}y'\|_{\tC(G)}<\epsilon$.
We leave it to the reader to check that $(xyx^{-1}y^{-1})\cdot(x'y'x'{}^{-1}y'{}^{-1})^{-1})$ is a product of four conjugates of $(x^{-1}x')^{\pm 1}$ and $(y^{-1}y')^{-1}$.
But we have seen above that $x'y'x'{}^{-1}y'{}^{-1})^{-1}=1$.
Hence, $\| xyx^{-1}y^{-1}\|_{\tC(G)} < 4\epsilon$.
Since $\epsilon>0$ was arbitrary $xyx^{-1}y^{-1}=1$.
\end{proof}

\section{Relation to the ultralimit asymptotic cone}
\label{section:ultrafilter cone}

\noindent
{\bf Running assumptions:}
All group $G,H,\dots$ are equipped with a conjugation-invariant norm.
Throughout $T,T',\dots$ denote scaling sets.

\begin{void}
Let $T$ be a scaling set.
A {\bf $T$-valued scaling sequence} is a function $\B d \colon \NN \to T$ such that $\lim_{n \to \infty} \B d(n)=\infty$.
We call $\B d$ a {\bf scaling sequence} if $T=(0,\infty)$.
We will write $d_n$ for $\B d(n)$ when this is convenient, which is the usual practice. 

Throughout this section $\omega$ denotes a non principal ultrafilter on $\NN$.
The elements of $\Cone_\omega(G,\B d)$ are denoted by $\{g_n\}$ where $(g_n)$ is a sequence in $G$ such that $\limsup_n \tfrac{1}{d_n} \|g_n\|_G < \infty$.
\end{void}

\begin{prop}\label{P:directional into asymptotic}
Let $T$ be a scaling set.
For any $T$-valued scaling sequence and any non principal ultrafilter $\omega$ there is a well defined canonical homomorphism 
\[
\rho_{\omega,\B d} \colon \tC(G,T) \xto{ \ [g] \mapsto \{g \circ \B d\} \ } \Cone_\omega(G,\B d).
\]
It is Lipschitz with constant $1$.
\end{prop}

\begin{proof}
Since $d_n \in T$ and $\lim_{n \to \infty} d_n=\infty$, Lemma \ref{L:limsup<norm} implies
\[
\limsup_n \tfrac{1}{d_n}\|g(d_n)\| \leq
\limsup_{t \in T} \tfrac{1}{t}\|g(t)\| \leq \|[g]\|_{\tC(G,T)} < \infty.
\]
Thus, if $g \in \C C(G,T)$ then $g \circ \B d \colon \NN \to G$ represents an element in $\Cone_{\omega}(G,\B d)$.
To see that $\rho$ is independent of representatives, suppose that $g,g' \in \C C(G,T)$ represent the same element in $\tC(G,T)$.
Then $[g^{-1}g']$ is the trivial element in $\tC(G,T)$ so Lemma \ref{L:characterize norm 0 in C} implies
\[
\lim_n \tfrac{1}{d_n} \| g(d_n)^{-1}g'(d_n)\| = \lim_{t \in T} \tfrac{1}{t}\|g(t)^{-1} g'(t)\|=0.
\]
Therefore $\{g \circ \B d\}=\{ g' \circ \B d\}$ in $\Cone_\omega(G,\B d)$.
It follows that $\rho$ is well defined.
It is a homomorphism because the group structures on $\tC(G,T)$ and $\Cone_\omega(G,\B d)$ are induced from that of $G$.
\end{proof}

Recall Definition \ref{def:scaling independence} of independence of scaling. 
Also recall that $\tC(G)$ denotes $\tC(G,(0,\infty))$ with the maximal scaling set $(0,\infty)$.

\begin{theorem}\label{T:equivalence scaling independence and injective rho}
The following are equivalent for a group $G$.
\begin{enumerate}[label=(\alph*)]
\item\label{T:equivalence scaling independence and injective rho:scaling}
$G$ is independent of scaling.

\item\label{T:equivalence scaling independence and injective rho:injective}
The homomorphism $\rho_{\omega,d} \colon \tC(G) \to \Cone_{\omega}(G,\B d)$ is injective for any non-principal ultrafilter $\omega$ and any scaling sequence $d$.
\end{enumerate}
If, in addition, $\|[g]\|_{\tC(G)} = \lim_{t\to \infty} \tfrac{1}{t} \|g(t)\|$ for every $[g] \in \tC(G)$, then $\rho_{\omega,d}$ is an isometric embedding.
\end{theorem}

\begin{proof}
\ref{T:equivalence scaling independence and injective rho:scaling} $\implies$ \ref{T:equivalence scaling independence and injective rho:injective}
Fix some $\omega$ and $d$ and consider some $[g] \in \tC(G)$ in the kernel of $\rho=\rho_{\omega,d}$.
Then $\omegalim_n \tfrac{1}{d_n}\|g(d_n)\|=0$ and therefore there is a subsequence $\{ n_k\}_{k \geq 1}$ such that $\lim_{k \to \infty} \tfrac{1}{d_{n_k}} \| g(d_{n_k})\|=0$.
By further passage to a subsequence we may assume that $\{d_{n_k}\}_k$ is increasing.
Set $T=\{ d(n_k) : k \geq 1\}$.
This is a scaling set and we set $[h]=\widehat{\res}_T([g])$.
Then  $h=g|_T$ and since $d_{n_k}$ is increasing
\[
\limsup_{t \in T} \tfrac{1}{t}\|h(t)\| = \limsup_k \tfrac{1}{d(n_k)}\|g(d(n_k))\|=0.
\]
By Lemma \ref{L:characterize norm 0 in C} $[h]=1$, but by hypothesis $\widehat{\res}_T$ is injective, so $[g]=1$.

\ref{T:equivalence scaling independence and injective rho:injective} $\implies$ \ref{T:equivalence scaling independence and injective rho:scaling}
Consider a scaling set $T$.
Choose a scaling sequence $d_1<d_2<\dots$ in $T$ (this can be done since $T$ is unbounded).
By inspection the injective homomorphism $\rho_{\omega,d} \colon \tC(G) \to \Cone_\omega(G,\B d)$ factors as
\[
\tC(G) \xto{\widehat{\res}_T} \tC(G,T) \xto{\rho_{\omega,\B d}} \Cone_\omega(G,\B d).
\]
Therefore $\widehat{\res}_T$ is injective and by Proposition \ref{P:restriction injective => isometry} it is an isometry.

Suppose that $\lim_{t>0} \tfrac{1}{t}\|g(t)\|=\|[g]\|_{\tC(G)}$ where $[g] \in \tC(G)$.
Clearly $\| \rho([g])=\|\{ g \circ d\}\| = \omegalim_n \tfrac{1}{d_n}\|g(d_n)\|$ and the $\omega$-limit is an honest limit by the assumption.
Then $\| \rho([g])\|=\|[g]\|_{\tC(G)}$ and $\rho$ is an isometry.
\end{proof}

\section{Examples: Abelian and nilpotent groups and verbal norms}

We begin with the simplest examples of abelian groups.

\begin{prop}\label{prop:tCZ} 
Let $\ZZ$ be equipped with the standard metric.
Then 
\begin{enumerate}[label=(\alph*)]
\item
$\tC(\ZZ)$ is isometric to $\RR$ with the usual metric.

\item
$\ZZ$ is independent of scaling (Definition \ref{def:scaling independence}) thus $\tC(\ZZ,T)$ is isometric to $\RR$ for any scaling set $T$.

\item
$\rho \colon \tC(\ZZ) \to \Cone_\omega(\ZZ,\B d)$ is an isometry for any $\omega$ and scaling sequence  $\B d$.
\end{enumerate}
\end{prop}

\begin{proof}
We choose the scaling set $T=(0,\infty)$.
By Proposition \ref{P:properties Psi iota} and Paragraph \ref{V:hat pi construction} the homomorphism $\hat{\pi} \circ \iota \colon \Free_\RR(\ZZ) \to \tC(\ZZ)$ is Lipschitz with constant $1$ and has dense image.
Therefore it suffices to show that the image of $\hat{\pi}\circ \iota$ is isometric to $\RR$.

With the notation in Paragraph \ref{V:notation generators FRG}, let $L \leq \Free_\RR(\ZZ)$ be the subgroup $L=\{ \overline{1}(r) : r \in \RR\}$.
We claim that
\[
\hat{\pi}\circ \iota(\Free_\RR(\ZZ)) = \hat{\pi} \circ \iota(L).
\]
Since $\hat{\pi} \circ \iota$ is a homomorphism it suffices to show that the image of any generator $\alpha = \overline{n}(r) \in \Free_\RR(\ZZ)$ is in the image of $L$.
Set $[h]=\hat{\pi}(\iota(\alpha))$.
It is represented by the function $h \colon T \to \ZZ$ gives by $h(t)=n \cdot \Myfloor{rt}$.
Consider $\alpha'=\overline{1}(nr) \in L$ and set $[h']=\hat{\pi}(\iota(\alpha'))$ represented by $h'(t)=\Myfloor{nrt}$.
Now,
\[
\limsup_{t>0} \tfrac{1}{t} (|\Myfloor{nrt}-n\Myfloor{rt}|) \leq
\limsup_{t>0} \tfrac{1}{t} |n| = 0.
\]
By Lemma \ref{L:characterize norm 0 in C} $[h^{-1}h']_{\tC(\ZZ)}=0$ so $[h]=[h']$ as required.

By the definition of the metric on $\Free_\RR(\ZZ)$ it is clear that $L$ is isometric to $\RR$ via $r \mapsto \overline{1}(r)$.
It is left to show that $\hat{\pi} \circ \iota \colon L \to \tC(\ZZ)$ is an isometric embedding.
Indeed, the stable length of any $n \in \ZZ$ is $|n|$ so by Lemma
\[
\| \hat{\pi}(\iota(\overline{1}(r))) \|_{\tC(\ZZ)} = |r| \cdot \tau(1) = |r| = \| \overline{1}(r)\|_L.
\]
Theorems \ref{T:abelian independent of scaling} and \ref{T:equivalence scaling independence and injective rho} show that $\ZZ$ is independent of scaling and that $\rho_{\omega,\B d} \colon \tC(\ZZ) \to \Cone_\omega(\ZZ,\B d)$ is an isometric embedding.
It is, in fact, an isometry since it is well known that $\Cone_\omega(\ZZ,\B d) \cong \RR$.
\end{proof}

\begin{prop}\label{prop:tCZn} 
Equip $\ZZ^n$  with the standard $L^1$-norm $\| v \|=\sum_i|v_i|$.
\begin{enumerate}[label=(\alph*)]
\item
$\tC(\ZZ^n,T)$ is isometric to $\RR^n$ with the standard $L^1$-norm.
\item
$\ZZ^n$ is independent of scaling and $\rho_{\omega,d} \colon \tC(\ZZ^n) \to \Cone_\omega(\ZZ^n,\B d)$ is an isometry.
\end{enumerate}
\end{prop}

\begin{proof}
This follows from Proposition \ref{prop:tCZ} and Theorem \ref{T:tC products}.
The second assertion follows from Theorems \ref{T:abelian independent of scaling} and \ref{T:equivalence scaling independence and injective rho} and the well known fact that $\Cone_\omega(\ZZ^n,d) \cong \RR^n$.
\end{proof}

We move on to study nilpotent groups.

\begin{prop}\label{prop:cone of general nilpotent groups}
Suppose that $G$ is a nilpotent group equipped with a conjugation-invariant norm.
Then $G$ is independent of scaling, $\tC(G)$ is abelian, and $\rho_{\omega,\B d} \colon \tC(G) \to \Cone_\omega(G,\B d)$ is an isometric embedding for any ultrafilter $\omega$ and scaling sequence $\B d$.
\end{prop}

\begin{proof}
This follows from Example \ref{example:nilpotent loc bdd comm} and Theorems \ref{T:abelian independent of scaling} and \ref{T:equivalence scaling independence and injective rho}.
\end{proof}

\begin{prop}\label{prop:quotient by bounded comm standard word norm}
Let $G$ be normally generated by $A$ and equipped with the standard conjugation-invariant word norm $\| \ \|_A$, Definition \ref{defn:standard word norm}.
Suppose that $[G,G]$ is bounded and that the image of $A$ in $G_{\OP{ab}}$ is finite.
Then 
\begin{enumerate}[label=(\alph*)]
\item\label{item:prop quotient bdd comm:scaling}
$G$ is independent of scaling, 
\item\label{item:prop quotient bdd comm:rho}
$\tC(G) \to \Cone_\omega(G,\B d)$ is an isometry, and 
\item\label{item:prop quotient bdd comm:Rn}
both groups are bi-Lipschitz equivalent to $(\RR^n,\| \ \|_1)$ where $n=\dim G_{\OP{ab}} \otimes \RR$. 
\end{enumerate}
\end{prop}

\begin{proof}
Let $\pi \colon G \to G_{\OP{ab}}$ be the quotient with kernel $[G,G]$.
Set $B=\pi(A)$.
By assumption $B$ is finite, and it generates $G_{\OP{ab}}$.
By Proposition \ref{prop:quotient of canonical word norms} $\pi \colon (G,\| \ \|_A) \to (G_{\OP{ab}},\| \ \|_B)$ is a metric quotient.
Since $[G,G]$ is bounded Corollary \ref{cor:metric quotient bounded kernel} implies that $\tC(\pi)$ is an isometry $\tC(G,T) \to \tC(G_{\OP{ab}},T)$ for any scaling set $T$.
But any abelian group is independent of scaling by Theorem \ref{T:abelian independent of scaling}, hence $G$ is independent of scaling.
This proves point \ref{item:prop quotient bdd comm:scaling}.

Let $T$ denote the torsion subgroup of $G_{\OP{ab}}$ and set $H=G_{\OP{ab}}/T$.
Equip $H$ with the quotient norm, Lemma \ref{L:quotient pseudonorm}, and let $\tau \colon G_{\OP{ab}} \to H$ be the quotient map.
By Proposition \ref{prop:quotient of canonical word norms} the norm on $H$ is the standard word norm $\| \ \|_{\tau(B)}$.
Clearly $H \cong \ZZ^n$ for some $n$ and let $\lambda \colon H \to \ZZ^n$ be an isomorphism.
It follows from Proposition \ref{prop:finite generating sets bi lipschitz equivalence} that $\lambda$ is a bi-Lipschitz equivalence.
The naturality of the maps $\rho_{\omega,\B d}$, Section \ref{section:ultrafilter cone}, implies the commutativity of the following diagram.
\[
\xymatrix{
\tC(G) \ar[r]^{\tC(\pi)} \ar[d]_{\rho} 
& 
\tC(G_{\OP{ab}}) \ar[d]_\rho \ar[r]^{\tC(\tau)} 
&
\tC(H) \ar[d]_\rho \ar[r]^{\tC(\lambda)} 
&
\tC(\ZZ^n) \ar[d]^\rho
\\
\Cone_\omega(G,\B d) 
\ar[r]_{\pi_*} &
\Cone_\omega(G_{\OP{ab}},\B d) 
\ar[r]_{\tau_*} &
\Cone_\omega(H,\B d) 
\ar[r]_{\lambda_*} &
\Cone_\omega(\ZZ^n)
}
\]
We have seen above that $\tC(\pi)$ is an isometry.
The boundedness of $[G,G]$ also easily implies that $\pi_*$ is an isometry.
Since $T$ is finite it is bounded in $G_{\OP{ab}}$ and the same argument shows that $\tC(\tau)$ and $\tau_*$ are isometries.
Since $\lambda$ is a bi-Lipschitz equivalence $\tC(\lambda)$ is a bi-Lipschitz equivalence by Proposition \ref{P:naturality C and tC}, and $\lambda_*$ is a bi-Lipschitz equivalence by easy inspection of the definition of ultrafilter cones.
By Proposition \ref{prop:tCZn} the leftmost vertical arrow $\rho$ is an isometry between groups that are isometric to $(\RR^n,\| \ \|_1)$.
Therefore $\rho$ at the first column is a bi-Lipschitz equivalence of groups which are bi-Lipschitz equivalent to $(\RR^n,\| \ \|_1)$.
Since it is also an isometric embedding, it is an isometry and this proves \ref{item:prop quotient bdd comm:rho} and \ref{item:prop quotient bdd comm:Rn}.
\end{proof}

\begin{example}\label{example:fg nilpotent groups}
Let $G$ be one of the groups equipped with a conjugation-invariant norm
\begin{enumerate}[leftmargin=* , label=(\roman*)]
\item \label{example:intro ab-ht-sol:nilp}
A finitely generated nilpotent group.

\item \label{example:intro ab-ht-sol:sol}
A finitely generated solvable group whose commutator subgroup is finitely generated nilpotent.

\item \label{example:intro ab-ht-sol:ht}
Thompson's group $F$ or, more generally, a Higman-Thompson group.
\end{enumerate}
Then $G$ is independent of scaling, $\rho_{\omega,\B d} \colon \tC(G) \to \Cone_\omega(G,\B d)$ is an isometry for any ultrafilter $\omega$ and any scaling sequence $\B d$, both groups are bi-Lipschitz equivalent to $(\RR^n,\| \ \|_1)$.
\begin{proof}
Since $G$ is finitely generated, the conjugation-invariant {\em word} norm on $G$ Lipschitz dominates any other conjugation-invariant norm.
Then $[G,G]$ is bounded in $G$ for groups $G$ of the form \ref{example:intro ab-ht-sol:nilp} by \cite[Theorem 5.H]{MR3426433}.
Similarly, $[G,G]$ is bounded in $G$ for  groups of the form \ref{example:intro ab-ht-sol:sol} by \cite[Theorem 5.K]{MR3426433}.
For groups of the form \ref{example:intro ab-ht-sol:ht}, Theorem 1.1 and the subsequent discussion in \cite[Theorem 1.1]{MR3693109} show that $[G,G]$ is six-uniformly simple, i.e., for any $1 \neq g \in [G,G]$, every other $h \in [G,G]$ can be written as a product of at most six conjugates of $g$. 
Hence $[G,G]$ is bounded in all three cases \ref{example:intro ab-ht-sol:nilp}--\ref{example:intro ab-ht-sol:ht}.
Apply Proposition \ref{prop:quotient by bounded comm standard word norm}.
\end{proof}
\end{example}

\begin{example}\label{E:symmetric8}
Let $G$ be a group, all of whose elements have finite order.
Then $\tC(G)=1$ by Proposition \ref{P:distorted => trivial cone}.
For example $S_\infty$, the group of finitely supported permutations of $\{1,2,\dots\}$ is a torsion group and therefore $\tC(S_{\infty})=1$.
In contrast, Karlhofer \cite{2203.10889} proved that the ultrafilter cone $\OP{Cone}_{\omega}(S_{\infty})$ with respect to any non-principal ultrafilter is a nontrivial contractible simple group in which every element is a commutator.
\end{example}

\begin{example}\label{example:commutator length abelian}
Let $G$ be a group and equip $H=[G,G]$ with the commutator length norm.
Then $\tC(H)$ is abelian.
More generally, let $H=\Gamma_n(G)$ be the $n^{\text{th}}$ group in the lower central series of $G$.
That is, $H$ is the subgroup of $G$ generated by all commutators of length $n$.
Equip $H$ with the conjugation-invariant word norm associated with this set of generators.
Then $\tC(H)$ is nilpotent of class $n$.
Indeed,
let $[x_1,x_2,\dots,x_n]$ denote the iterated commutator.
Apply Proposition \ref{P:words preserved by cone} with the word $w=[x_1,x_2,\dots,x_n] \in \Free(x_1,\dots,x_n)$.
\end{example}

\section{Example: The free group}\label{S:coneFn}

{\bf Assumptions:}
In this section we study the free group $G=\Free(A)$  equipped with the standard conjugation-invariant word norm, i.e., the word norm associated to the constant length function $\mu(a)=1$.

\begin{theorem}\label{theorem:F(A) independent of scaling}
$\Free(A)$ is independent of scaling.
The homomorphisms $\rho_{\omega,\B d} \colon \tC(\Free(A)) \to \Cone_\omega(\Free(A),\B d)$ are injective.
\end{theorem}

\begin{proof}
The hypothesis of Lemma \ref{lem:multivariate powers of elements} is satisfied for $\Free(A)$ by \cite[Theorem 1.1, Example 1.2]{MR4741237}.
Then $\Free(A)$ is independent of scaling by Lemmas \ref{L:limit g(t) exists if on FQG} and \ref{L:lim exists => independence scaling}.
The rest follows from Theorem \ref{T:equivalence scaling independence and injective rho}.
\end{proof}

We do not know whether $\rho_{\omega, d}$ are isometries.
In the remainder of this section we will give some details about the structure of $\tC(\Free(A))$.
Once again, our results are incomplete.

\begin{defn}
Let $\Gamma$ be a group.
An element $g \in \Gamma$ is called {\bf pure} if $g=h^n$ for some $h \in G$ and some $n \in \ZZ$ implies $h=g^{\pm 1}$ and $n=\pm 1$.
Let $\Pure(\Gamma)$ denote the set of pure elements in $\Gamma$.
\end{defn}

\noindent
{\bf Example:}
The identity element $e \in \Gamma$ is never a pure element.
The only pure elements in $\ZZ$ are $\pm 1$.
Divisible groups contain no pure elements.

\begin{void}\label{par:Fix Theta in F(A)}
$\Pure(\Gamma)$ is closed under conjugation and inverses.
The relation $g \sim g'$ if $g'$ is conjugate to $g^{\pm 1}$ is an equivalence relation on $\Pure(\Gamma)$.

{\em For the remainder of this section we fix}
\[
\Theta \subseteq \Pure(\Free(A)),
\]
a set of cyclically reduced representatives for the equivalence classes in $\Pure(\Free(A))$.
Notice that $\Theta$ does not contain the identity element.
\end{void}

Recall the homomorphisms $\hat{\pi}$ and $\iota$ from paragraph \ref{V:hat pi construction} and Definition  \ref{D:iota FR(A)-->tD(A)}.
Recall that $\Free(\Theta;\tau)$ denotes $\Free(\Theta)$ equipped with the conjugation-invariant norm defined in paragraph \ref{V:word norms} with respect to the stable length function $\tau \colon \Theta \to \RR$.

\begin{theorem}\label{theorem:F(Theta) to tC(Free)}
Set $G=\Free(A)$.
The restriction of $\hat{\pi} \circ \iota \colon \Free_\RR(G) \to \tC(G)$ to $\Free_\RR(\Theta)$ is an {\em injective} homomorphism
\[
\xi_\Theta \colon \Free_\RR(\Theta;\tau) \to \tC(G)
\]
with dense image.
It is Lipschitz with constant $1$ and $\|\xi(\alpha)\|=\|\alpha\|$ for all generators $\alpha=\overline{\theta}(r) \in \Free_\RR(\Theta)$.
Upon completion,
\[
\hat{\xi}_\Theta \colon \widehat{\Free(\Theta;\theta)} \to \tC(G)
\]
is a metric quotient homomorphism (Definition \ref{VD:metric quotient}).
\end{theorem}

\medskip
\noindent
{\bf Conjecture:}
The map $\hat{\xi}$ is an isometry.

\medskip
\noindent
{\bf Notation:}
The remainder of this section is devoted to the proof of Theorem \ref{theorem:F(Theta) to tC(Free)}.
Throughout we denote
\[
G=\Free(A).
\]
Since $\Free(A)$ is independent of scaling we will work with the scaling set $T=(0,\infty)$.
Throughout $\mu \colon G \to \RR$ denotes the norm function and $\tau$ the stable length.

\begin{void}
Let us recall several elementary facts about the free group $\Free(A)$.
Let $w$ be a word in the alphabet $A \sqcup A^{-1}$.
We will write $|w|$ for the {\bf length} of the word $w$, i.e., the number of letters in the alphabet $A \sqcup A^{-1}$ appearing in it.
We call $w$ {\bf reduced} if it contains no subword of the form $aa^{-1}$ or $a^{-1}a$ which we call an {\bf elementary relation}.
The elements of $\Free(A)$ are equivalence classes of words and each is represented by a unique reduced word.
Any word $w$ may be reduced by successively eliminating elementary relations; The resulting reduced word is independent of the order elementary relations are eliminated.

At this stage it is worthwhile to highlight some elementary facts related to this process of reduction.
Let us fix terminology and say that a {\bf sequence} in a word $w$ is just a collection of symbols (letters) in $w$, while a {\bf subword} or an {\bf interval} is a sequence of consecutive symbols in $w$.
Clearly, the reduced form of $w$ is obtained by removing a collection of symbols from $w$, which form a disjoint union of intervals in $w$.
In the sequel we will use repeatedly the following observation.
Consider a sequence $u$ in $w$.
Let $w-u$ denote the word obtained by removing the symbols in $u$ from $w$.
Then the reduced form of $w-u$ is obtained from $w$ by the removal of $i$ disjoint intervals whose union contains the symbols in $u$.
Clearly, $i \leq |u|$.

Suppose that  $w,u$ are reduced words.
Their concatenation $wu$ may or may not be reduced.
It is not reduced only if an elementary relation appears at the juncture.
In this case the reduction sequence of $wu$ is unique and it is obtained in a unique way by successively removing an elementary relation at the juncture. 
Thus, $w=w'v$ and $u=v^{-1}u'$ for some suffix $v$ of $w$ and $w'u'$ is the reduced form of $wu$.

Suppose that $w=w_n\cdots w_1$ and $u=u_1\cdots u_m$ are {\em reduced} words expressed as the concatenation of non-empty (reduced) words.
We say that $w_i$ and $u_j$ {\bf collide} if in the process of reduction of $wu$ ``they meet each other'' in the sense that there are letters in $w_i$ that are cancelled with those of $u_j$ in the unique(!) process of reduction of $wu$.

More precisely, $w_i$ and $u_j$ collide if $w=w'v$ and $u=v^{-1}u'$ and
\[
\max\left\{ \sum_{k=1}^{i-1} |w_k| , \sum_{k=1}^{j-1}|u_k| \right\} \, <  \,
\min\left\{ |v|, \sum_{k=1}^{i} |w_k| , \sum_{k=1}^{j}|u_k| \right\}.
\]
Figure \ref{Fig:wiuj collide} illustrates the situation.
The ``collision'' is at an interval of length 
\[
\ell = \min\left\{ |v|, \sum_{k=1}^{i} |w_k| , \sum_{k=1}^{j}|u_k| \right\} - \max\left\{ \sum_{k=1}^{i-1} |w_k| , \sum_{k=1}^{j-1}|u_k| \right\}.
\]

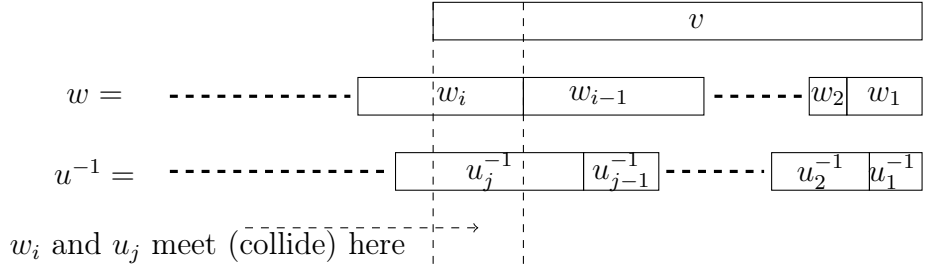
\begin{figure}
\begin{tikzpicture}
\draw (5.5,0) rectangle ++(6.5,-0.5) ;
\draw (9,-0.25) node {$v$};
%
\draw (11,-1) rectangle ++(1,-0.5) ;
\draw (11.5,-1.25) node {$w_1$} ;
\draw (10.5,-1) rectangle ++(0.5,-0.5) ;
\draw (10.75,-1.25) node {$w_2$} ;
\draw[dashed, very thick] (10.4,-1.25) -- (9.2,-1.25) ;
\draw (6.7,-1) rectangle ++(2.4,-0.5) ;
\draw (7.7,-1.25) node {$w_{i-1}$} ;
\draw (4.5,-1) rectangle ++(2.2,-0.5) ;
\draw (5.75,-1.25) node {$w_{i}$} ;
\draw[dashed, very thick] (2,-1.25) -- (4.4,-1.25) ;
\draw (1,-1.25) node{$w=$} ;
\draw (11.3,-2) rectangle ++(0.7,-0.5) ;
\draw (11.6,-2.25) node {$u_1^{-1}$} ;
\draw (10,-2) rectangle ++(1.3,-0.5) ;
\draw (10.65,-2.25) node {$u_2^{-1}$} ;
\draw[dashed, very thick] (8.6,-2.25) -- (9.9,-2.25) ;
\draw (7.5,-2) rectangle ++(1,-0.5) ;
\draw (8,-2.25) node {$u_{j-1}^{-1}$} ;
\draw (5,-2) rectangle ++(2.5,-0.5) ;
\draw (6.25,-2.25) node {$u_{j}^{-1}$} ;
\draw[dashed, very thick] (2,-2.25) -- (4.9,-2.25) ;
\draw (1,-2.25) node{$u^{-1}=$} ;
\draw[dashed] (5.5,0)--(5.5,-3.5) ;
\draw[dashed] (6.7,0)--(6.7,-3.5) ;
\draw[dashed, -to] (3,-3)--(6.1,-3);
\draw (2.5,-3.25) node {$w_i$ and $u_j$ meet (collide) here};
\end{tikzpicture}
\caption{Collision of $w_i$ and $u_j$}\label{Fig:wiuj collide}
\end{figure}

Let $g$ be a {\em cyclically reduced} 
word in the alphabet $A \sqcup A^{-1}$.
Notice that the word $g^n=g \cdots g$ formed by the $n$-fold concatenation of $g$ is (cyclically) reduced.
We say that a word $v$ is a {\bf $g$-packet} if it is a sub-word (i.e., an interval) in the word $g^n=g \cdots g$ for some $n \neq 0$ (including negative $n$).
It is clear that any interval in a $g$-packet is itself a $g$-packet.
It is also clear that a $g$-packet of length $k \cdot |g|$ is conjugate to $g^{\pm k}$ in $\Free(A)$.
The following is therefore clear
\end{void}

\begin{prop}
Let $w=w_n\cdots w_1$ and $u=u_1\cdots u_k$ be reduced words.
Suppose that  in the process of reduction of $wu$ the subwords $w_i$  and $u_j$ collide at in interval of length $\ell$.
If $w_i$ is a $g$-packet and $u_j$ is an $h$-packet, and if $\ell=\OP{lcm}(|g|,|h|)$ then $g^{\pm k}$ is conjugate to $h^{ \pm m}$ where $k=\ell/|g|$ and $m=\ell/|h|$.
 \end{prop}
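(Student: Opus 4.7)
The plan is to identify the colliding interval as a common subword (up to inversion) of $w_i$ and $u_j$, interpret it inside $\Free(A)$ in two ways---once as a $g$-packet and once as an $h$-packet---and then combine the two interpretations. First I would unpack the definition of collision: the colliding portion occupies an interval $I$ of length $\ell$ which is simultaneously a sub-interval of $w_i$ (inside $w$) and a sub-interval of $u_j$ (inside $u$); moreover, by the uniqueness of the reduction process, the word occupying $I$ viewed inside $w$ is the inverse of the word occupying the corresponding interval inside $u$.

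Let $p$ denote the word occupying $I$ viewed as a subword of $w$. Since any sub-interval of a $g$-packet is itself a $g$-packet (it remains a subword of some power $g^N$), $p$ is a $g$-packet of length $\ell = k \cdot |g|$. Symmetrically, $p^{-1}$ is an $h$-packet of length $\ell = m \cdot |h|$. The next step is to invoke the elementary fact that a $g$-packet of length $k \cdot |g|$ is, as an element of $\Free(A)$, a conjugate of $g^{\pm k}$: if the packet starts at offset $s$ within some $g^N$ and we write $g = g_1 g_2$ with $|g_1| = s$, then the packet equals $(g_2 g_1)^{\pm k} = g_1^{-1} g^{\pm k} g_1$. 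Applying this twice gives $p = \alpha^{-1} g^{\pm k} \alpha$ for some $\alpha \in \Free(A)$ and, analogously, $p^{-1} = \beta^{-1} h^{\pm m} \beta$ for some $\beta \in \Free(A)$.

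Combining the two expressions for $p$ yields $\alpha^{-1} g^{\pm k} \alpha = \beta^{-1} h^{\mp m} \beta$ in $\Free(A)$, and hence $g^{\pm k}$ is conjugate to $h^{\mp m}$; this is exactly the required conclusion, since the signs are absorbed into the $\pm$. The main obstacle is purely bookkeeping---the word $w = w_n \cdots w_1$ is indexed right-to-left while $u = u_1 \cdots u_k$ is indexed left-to-right, and the orientation of the packet within $g^N$ (and within $h^N$) depends on the sign of $N$, so one has to carefully track how the natural orientation of $w_i$ and $u_j$ inside $w$ and $u$ translates to an orientation inside the ambient cyclic word $g^N$, respectively $h^N$, in order to extract the $\pm$ sign. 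But there is no conceptual difficulty beyond this once the notion of $g$-packet and the definition of collision are in hand.
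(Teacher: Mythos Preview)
Your proof is correct and is exactly the argument the paper has in mind: the paper does not write out a proof at all, merely prefacing the proposition with the observation that a $g$-packet of length $k\cdot|g|$ is conjugate to $g^{\pm k}$ and then declaring ``The following is therefore clear.'' Your write-up is a faithful unpacking of that one-line justification, including the correct identification of the colliding interval as a word $p$ that is simultaneously a $g$-packet (as a sub-interval of $w_i$) and, via inversion, an $h$-packet (as a sub-interval of $u_j$).
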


Recall $\Theta \subseteq \Free(A)$ from paragraph \ref{par:Fix Theta in F(A)}.

\begin{prop}\label{P:pure roots}
For any $1 \neq g \in G=\Free(A)$ there exists some $\theta \in \Theta$ such that $g$ is conjugate to $\theta^n$ for some $n \neq 0$.
\end{prop}

\begin{proof}
Use induction on $|g|$.
By replacing $g$ with a conjugate, we may assume that $g$ is cyclically reduced.
If $g$ is pure, we are done.
Otherwise $g=h^k$ for some $h \in G$ and $k \in \ZZ$ and $k \geq 2$.
Then $h$ is cyclically reduced and $|g|=k \cdot |h|$.
Apply the induction hypothesis to $h$.
\end{proof}

\begin{lemma}\label{L:Theta is pure}
Let $\theta_1, \theta_2 \in G$ be cyclically reduced and pure elements.
Suppose that $\theta_1^{n_1}=\theta_2^{n_2}$ for some $n_1,n_2 \neq 0$.
Then $\theta_1=\theta_2^{\pm 1}$.
\end{lemma}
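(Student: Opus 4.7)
The plan is to reduce the statement to a combinatorial identity of reduced words and then exploit double-periodicity, ultimately forcing $|\theta_1|=|\theta_2|$ via purity.

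First I would normalise: by replacing $\theta_i$ with $\theta_i^{-1}$ if necessary (purity is preserved by taking inverses), I may assume $n_1,n_2>0$. Because each $\theta_i$ is cyclically reduced, the $n_i$-fold concatenation $\theta_i^{n_i}$ is already a freely reduced word in $A\sqcup A^{-1}$, so the identity $\theta_1^{n_1}=\theta_2^{n_2}$ in $F(A)$ is literally an equality of reduced words. In particular $L:=|\theta_1|\cdot n_1=|\theta_2|\cdot n_2$. By enlarging $(n_1,n_2)$ to $(kn_1,kn_2)$ for $k$ sufficiently large (the relation $\theta_1^{kn_1}=\theta_2^{kn_2}$ still holds) I may further assume $L\geq|\theta_1|+|\theta_2|$.

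Next I would invoke the Fine--Wilf theorem on words: a word of length $L$ admitting two periods $p$ and $q$ with $L\geq p+q$ also admits period $\gcd(p,q)$. Applied to the common word $w=\theta_1^{n_1}=\theta_2^{n_2}$ with $p=|\theta_1|$ and $q=|\theta_2|$, this yields a word $v$ of length $d=\gcd(|\theta_1|,|\theta_2|)$ such that $w=v^{L/d}$ as a word. Reading off the first $|\theta_1|$ letters and the first $|\theta_2|$ letters of $w$ then shows, as elements of $F(A)$,
\[
\theta_1 = v^{|\theta_1|/d}\qquad\text{and}\qquad \theta_2=v^{|\theta_2|/d}.
\]
Purity of $\theta_1$ forces $|\theta_1|/d=\pm 1$, and purity of $\theta_2$ forces $|\theta_2|/d=\pm 1$; since both exponents are positive (the concatenations match without inversion), we get $|\theta_1|=|\theta_2|=d$, hence $\theta_1=v=\theta_2$. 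Undoing the initial sign normalisation yields $\theta_1=\theta_2^{\pm 1}$ in the original statement.

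The main obstacle is making the Fine--Wilf step self-contained in the paper's style. A clean alternative is to prove directly that the centraliser of a non-trivial element of $F(A)$ is infinite cyclic: then $\theta_1,\theta_2\in C(\theta_1^{n_1})=\langle c\rangle$ with $c$ having no proper root, and purity of $\theta_1,\theta_2$ immediately forces $c=\theta_1^{\pm 1}=\theta_2^{\pm 1}$. Either route reduces to the same combinatorial core, so I would choose whichever fits better with the surrounding exposition on cyclic reduction and $\theta$-packets already developed just before the lemma.
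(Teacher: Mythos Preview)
Your proof is correct and follows essentially the same strategy as the paper: normalise signs, read $\theta_1^{n_1}=\theta_2^{n_2}$ as a literal equality of reduced words, extract a common period of length $\gcd(|\theta_1|,|\theta_2|)$, and then let purity force $|\theta_1|=|\theta_2|$. The only real difference is in how the periodicity step is justified. You invoke Fine--Wilf (after enlarging the exponents so the length bound holds) or, alternatively, the fact that centralisers in free groups are infinite cyclic. The paper instead gives a direct, self-contained argument: it chops $\theta_1$ and $\theta_2$ into blocks of length $m=\gcd(|\theta_1|,|\theta_2|)$ and uses the Chinese remainder theorem (exploiting $\gcd(|\theta_1|/m,|\theta_2|/m)=1$) to show that every block equals the initial one, so each $\theta_i$ is a power of a word of length $m$. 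This is precisely the ``self-contained Fine--Wilf'' you anticipated as the main obstacle. Your centraliser alternative is slicker but imports a structural fact about free groups that the paper avoids; the paper's CRT argument stays at the level of word combinatorics already in play in the surrounding material on packets.
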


\begin{proof}
By replacing $\theta_1$ with $\theta_1^{-1}$ if $n_1<0$ and replacing $\theta_2$ with $\theta_2^{-1}$ if $n_2<0$ we may assume that $n_1,n_2>0$.

Clearly $\theta_1^{n_1}$ and $\theta_2^{n_2}$ are cyclically reduced.
Set $\ell_1=|\theta_1|$ and $\ell_2=|\theta_2|$.
The result is obvious if $\ell_1=\ell_2$ by comparing the reduced words $\theta_1^{n_1}=\theta_2^{n_2}$.
So we assume that $\ell_1<\ell_2$ and write $m=\OP{gcd}(\ell_1,\ell_2)$.
Set $p_1=\tfrac{\ell_1}{m}$ and $p_2=\tfrac{\ell_2}{m}$.
Then $\theta_1=\eta_0 \cdots \eta_{p_1-1}$ with $|\eta_i|=m$.
Similarly $\theta_2=\zeta_0 \cdots \zeta_{p_2-1}$ with $|\zeta_i|=m$.
Clearly, $\eta_0=\zeta_0$ since these are the first $m$ symbols in $\theta_1^{n_1}=\theta_2^{n_2}$.
By the Chinese remainder theorem for any $0 \leq r <p_2$ there exists $0 \leq q < p_2$ such that $q p_1 = r \mod p_2$.
That means that the $q$-th occurrence of $\eta_0$ in $\theta_1^{n_1}$ is equal to $\zeta_r$ in some occurrence of $\theta_2$ in $\theta_2^{n_2}$ (notice that $p_1| n_2$).
Therefore $\zeta_i=\eta_0$ for all $i$.
By a similar argument $\eta_i=\zeta_0$ for all $i$.
So $\theta_1=\eta_0^{p_1}$ and $\theta_2=\eta_0^{p_2}$.
The result follows since $\theta_1$ and $\theta_2$ are pure.
\end{proof}

\begin{lemma}\label{L:equal packets equal words}
Consider $\theta_1, \theta_2 \in \Theta$ and suppose that $\ell \geq \OP{lcm}(|\theta_1|,|\theta_2|)$.
Suppose that there exists a reduced word $v$ such that $|v| \geq \ell$ and such that $v$ is a $\theta_1$-packet and $v^{-1}$ is a $\theta_2$-packet.
Then $\theta_1=\theta_2$.
\end{lemma}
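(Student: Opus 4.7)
My plan is to reduce the hypothesis to the simpler assertion that $v$ is simultaneously a $\theta_1$-packet and a $\theta_2$-packet (ignoring the inversion), and then to align both packet structures at the left end of $v$ by replacing $\theta_1,\theta_2$ with suitable cyclic rotations $\widetilde{\theta}_1,\widetilde{\theta}_2$ of $\theta_1^{\pm 1},\theta_2^{\pm 1}$. The length hypothesis $|v|\ge \OP{lcm}(|\theta_1|,|\theta_2|)$ will then force the equality $\widetilde{\theta}_1^{\,L/\ell_1}=\widetilde{\theta}_2^{\,L/\ell_2}$ inside $v$, where $\ell_i=|\theta_i|$ and $L=\OP{lcm}(\ell_1,\ell_2)$, and Lemma \ref{L:Theta is pure} will close the argument.

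First I would observe that if $v^{-1}$ is a $\theta_2$-packet, say a subword of $\theta_2^{n_2}$, then $v$ is a subword of $(\theta_2^{n_2})^{-1}=\theta_2^{-n_2}$, and since $-n_2\neq 0$ this means $v$ is itself a $\theta_2$-packet. So the hypothesis simplifies to: $v$ is both a $\theta_1$-packet and a $\theta_2$-packet. Next, for each $i\in\{1,2\}$, if $v$ sits as a subword of $\theta_i^{N_i}$ (with $N_i>0$, the case $N_i<0$ being handled identically using $\theta_i^{-1}$), I would write $\theta_i=\alpha_i\beta_i$ where $\alpha_i$ is the prefix of $\theta_i$ preceding the start position of $v$ modulo $\ell_i$, and set $\widetilde{\theta}_i:=\beta_i\alpha_i$. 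Then $\widetilde{\theta}_i$ is a cyclic rotation of $\theta_i^{\pm 1}$ (hence a conjugate in $G$), so it is cyclically reduced and pure (purity being conjugation-invariant), and by construction $v$ is a prefix of the infinite concatenation $\widetilde{\theta}_i\widetilde{\theta}_i\cdots$.

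Since $\widetilde{\theta}_i$ is cyclically reduced, every concatenation $\widetilde{\theta}_i^k$ is reduced, so the prefix of $v$ of length $k\ell_i$ is literally $\widetilde{\theta}_i^k$. Taking $k=L/\ell_i$ (an integer since $\ell_i\mid L$), the length hypothesis gives
\[
\widetilde{\theta}_1^{\,L/\ell_1}\;=\;(\text{prefix of $v$ of length }L)\;=\;\widetilde{\theta}_2^{\,L/\ell_2}.
\]
Since $L/\ell_i\neq 0$ and each $\widetilde{\theta}_i$ is pure and cyclically reduced, Lemma \ref{L:Theta is pure} yields $\widetilde{\theta}_1=\widetilde{\theta}_2^{\pm 1}$. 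Going back to $\theta_1,\theta_2$, this says that $\theta_1$ is conjugate to $\theta_2^{\pm 1}$ in $G$, i.e.\ $\theta_1$ and $\theta_2$ lie in the same equivalence class of $P(G)$. Since $\Theta$ is a chosen set of representatives for these equivalence classes, we conclude $\theta_1=\theta_2$.

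The main obstacle I anticipate is purely notational: carefully handling the four sign cases arising from $N_1,N_2$ being positive or negative, and verifying that in each case $\widetilde{\theta}_i$ is cyclically reduced and pure so that Lemma \ref{L:Theta is pure} applies. Apart from that bookkeeping, every step is essentially forced by the length inequality $|v|\ge L$ and the uniqueness of reduced words in the free group.
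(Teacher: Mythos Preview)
Your proof is correct and follows essentially the same route as the paper's: both arguments align the two packet structures via cyclic rotations, obtain an equality of powers of cyclically reduced pure words of total length $L=\OP{lcm}(|\theta_1|,|\theta_2|)$, invoke Lemma~\ref{L:Theta is pure}, and finish using that $\Theta$ contains one representative per equivalence class. The only cosmetic difference is that the paper first truncates $v$ to length exactly $L$ and rotates only $\theta_2$ (producing a single auxiliary $\theta_3$), whereas you rotate both $\theta_1$ and $\theta_2$ to make $v$ a common prefix; your initial observation that $v^{-1}$ being a $\theta_2$-packet is the same as $v$ being one is a clean way to absorb the inversion up front.
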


\begin{proof}
By taking a suffix of $v$ we may assume that $\ell=\OP{lcm}(|\theta_1|,|\theta_2|)$.
Indeed, such a suffix remains reduced and a $\theta_i$-packet ($i=1,2$).
Set $n_1=\tfrac{\ell}{|\theta_1|}$ and $n_2=\tfrac{\ell}{|\theta_2|}$. 
Then $v$ is a cyclic permutation of $\theta_1^{\pm n_1}$ and $v^{-1}$ is a cyclic permutation of $\theta_2^{\pm n_2}$.
Hence $\theta_1^{\pm n_1}$ is a cyclic permutation of $\theta_2^{\pm n_2}$.
Then $\theta_1^{\pm n_1} = \theta_3^{\pm n_2}$ for some $\theta_3$ which is a cyclic permutation of $\theta_2$.
Lemma \ref{L:Theta is pure} implies that $\theta_1=\theta_3^{\pm 1}$, so $\theta_1$ is conjugate to $\theta_2^{\pm 1}$.
That is, $\theta_1$ and $\theta_2$ belong to the same equivalence class in $P(G)$, so $\theta_1=\theta_2$ by the definition of $\Theta$.
\end{proof}

It is well known that $G=\Free(A)$ has no distorted elements, namely $\tau(g)>0$ for all $1 \neq g \in G$.
To see this, observe that for every nontrivial $g\in \Free(A)$ there exists a homogeneous quasimorphism
$\psi\colon \Free(A)\to \RR$ such that $\psi(g)>0$ \cite[Section 4.E]{MR3426433}. It follows that $\tau(g)>0$.

\begin{lemma}\label{L:small intervals reduction}
Consider cyclically reduced $g \in G$.
Then the removal of $m$ symbols from $g^n$ results in a  word whose reduced form is obtained from $g^n$ by the removal of at most $m$ intervals of total length at most
\[
\tfrac{|g| \cdot (1+|g|)}{\tau(g)} m.
\]
In particular it is a product of at most $m+1$ $g$-packets of total length at least $n|g|-\tfrac{|g|\cdot(1+|g|)}{\tau(g)} m$.
\end{lemma}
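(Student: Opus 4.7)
The plan is to identify the $i \leq m$ intervals $I_1, \ldots, I_i$ as in the observation preceding the lemma, so that $g^n = v_0 I_1 v_1 \cdots I_i v_i$ and the reduced form of $g^n - \B u$ is $V = v_0 v_1 \cdots v_i$. Each $I_j$ is a $g$-packet; set $m_j := |\B u \cap I_j|$, so $\sum_j m_j = m$, and write $L = \sum_j |I_j|$. The central intermediate claim is that $\|I_j\| \leq m_j$ for each $j$, which combined with a length estimate for $g$-packets in terms of $\|\cdot\|$ will yield $L \leq \tfrac{|g|(1+|g|)}{\tau(g)} m$.

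For the intermediate claim I would observe that reducing the word $g^n - \B u$ to $V$ induces a non-crossing matching on the canceled symbols, and first check that no matched pair straddles two distinct intervals. Indeed, if positions $p \in I_j$ and $q \in I_{j'}$ with $j < j'$ were matched, then every non-$\B u$ position strictly between them in $g^n$ would have to be matched internally; but this range contains the inner packet $v_j$, which is non-empty (since it lies strictly between two removed intervals) and whose symbols survive in $V$, yielding a contradiction. Granting this, the matching restricts to a non-crossing perfect matching on $I_j \setminus \B u$, so $\B u \cap I_j$ is a cancellation sequence of length $m_j$ in the word $I_j$, and Proposition \ref{P:cancellation norm on FA} delivers $\|I_j\| \leq m_j$.

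For the length-per-interval bound I would write $|I_j| = k|g| + t$ with $0 \leq t < |g|$, and use that $I_j$ is a prefix of $(g')^\infty$ for the appropriate cyclic shift $g'$ of $g$, so $I_j = (g')^k v$ for $v$ a length-$t$ prefix of $g'$. Letting $v'$ be the complementary suffix ($v v' = g'$, $|v'| = |g|-t$), the identity $I_j = (g')^{k+1}(v')^{-1}$ together with $\tau(g') = \tau(g)$, Fekete's inequality $\|(g')^{k+1}\| \geq (k+1)\tau(g)$, and $\|(v')^{-1}\| \leq |v'|$ give $(k+1)\tau(g) - (|g|-t) \leq \|I_j\|$. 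Solving for $|I_j|$ and using $|g|-t \leq |g|$ yields $|I_j| \leq \tfrac{|g|}{\tau(g)}\bigl(\|I_j\| + |g| - \tau(g)\bigr)$. Summing and using $\sum_j m_j = m$ and $i \leq m$:
\[
L \leq \tfrac{|g|}{\tau(g)}\bigl( m + i(|g|-\tau(g)) \bigr) \leq \tfrac{|g|\,m}{\tau(g)}\bigl(1+|g|-\tau(g)\bigr) \leq \tfrac{|g|(1+|g|)}{\tau(g)} m.
\]
The ``in particular'' clause then follows since $V$ is a product of $i+1 \leq m+1$ $g$-packets of total length $\sum_j |v_j| = n|g| - L$. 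The subtlest step will be the non-crossing argument forcing cancellations to stay within a single interval.
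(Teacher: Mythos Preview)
Your proof is correct and follows the same overall strategy as the paper: localize the cancellations to each deleted interval $I_j$, bound each $|I_j|$ in terms of $m_j$ using the stable length $\tau(g)$, then sum and use $i \leq m$.

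The packaging differs slightly. You prove the sharp bound $\|I_j\| \leq m_j$ directly, via the non-crossing matching argument showing that $I_j - (\B u \cap I_j)$ reduces to the trivial word, and then convert this into a length bound by writing $I_j = (g')^{k+1}(v')^{-1}$ and invoking $\|(g')^{k+1}\| \geq (k+1)\tau(g)$. The paper instead enlarges $I_j$ to an interval $J \supseteq I_j$ with $|J| = k|g|$ and $|J| - |I_j| < |g|$, observes that $(\B u \cap I_j) \cup (J \setminus I_j)$ is a cancellation sequence in $J$ (which is conjugate to $g^k$), and deduces $\|g^k\| \leq m_j + |g|$, whence $|I_j| \leq k|g| \leq \tfrac{|g|}{\tau(g)}(m_j + |g|)$. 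Both routes rely on the same localization fact---that the reduction of $g^n - \B u$ does not match symbols across distinct $I_j$'s---which the paper asserts as ``clear'' and you justify explicitly. Your per-interval bound $\tfrac{|g|}{\tau(g)}(m_j + |g| - \tau(g))$ is marginally sharper than the paper's $\tfrac{|g|}{\tau(g)}(m_j + |g|)$, but after summing both yield the same stated estimate.
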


\begin{proof}
Notice that $w=g^n$ is reduced.
Let us first consider any sequence $u'$ of $m'$ symbols in $w$ such that the reduced form of $w-u'$ is obtained by the removal of a single interval $I$ containing $u'$.
If $|I|$ is not a multiple of $|g|$, choose some interval $J$ in $w$ containing $I$ such that $|J|=k \cdot |g|$ and $|J|-|I|<|g|$.
By removing $u'$ from $J$ and following the same reduction procedure in $w - u'$ that gives rise to the reduced word $w - I$, we see that the reduced form of $J-u'$ is $J -I$.
Hence, $u'$ together with the symbols in $J-I$ form a cancellation sequence in $J$.
Since $J$ is a cyclic permutation of $g^k$, the definition of the cancellation norm on $G$ implies that
\[
\|g^k\| =\|J\| \leq |u'|+|J-I| \leq m'+|g|.
\]
By the definition of the stable length
\[
\tau(g) \leq \tfrac{\| g^k\|}{k} \leq \tfrac{m'+|g|}{k}.
\]
It follows that 
\[
|I| \leq |J|= k |g| \leq \tfrac{|g|}{\tau(g)} (m'+|g|)
\]
Let us now consider a sequence $u$ of $m$ symbols in $g^n$.
The reduction of $g^n-u$ is obtained by the removal of disjoint intervals $I_1,\dots, I_k$ from $g^n$ whose union contains $u$, hence $k \leq |u| =m$.
Then $u$ is the disjoint union of the sequences $u_i=u \cap I_i$ where $1 \leq i \leq k$, and set $m_i=|u_i|$.
By following the reduction process that yields $w-\cup_i I_i$ from $w-u$ it is clear that for every $1 \leq i \leq k$ the reduced form of  $w-u_i$ is $g^n-I_i$. 
Therefore,  $|I_i| \leq \tfrac{|g|}{\tau(g)} (m_i+|g|)$.
The total length of the intervals $I_i$ is estimated by 
\[
\sum_{i=1}^k |I_i| 
\leq \tfrac{|g|}{\tau(g)} \sum_{i=1}^k (m_i+|g|) 
=
\tfrac{|g|}{\tau(g)} (m+ k|g|) 
\leq \tfrac{|g|}{\tau(g)}(m+m|g|) 
= 
\tfrac{|g|(1+|g|)}{\tau(g)}m.
\]
This completes the proof.
\end{proof}

\begin{lemma}\label{L:collision of partitions}
Fix some numbers $\ell >0$ and $N>0$.
Suppose that $\C P_1=\{I_1,\dots,I_{m_1}\}$ and $\C P_2=\{J_1,\dots,J_{m_2}\}$ are partitions of $\{1,\dots,N\}$ into non-empty intervals where $N \geq \ell(m_1+m_2)$.
Then there are $I_i \in \C P_1$ and $J_j \in \C P_2$ which intersect at an interval of length at least $\ell$.
\end{lemma}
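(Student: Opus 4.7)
The plan is to analyze the common refinement of the two partitions and finish with a short pigeonhole argument. First I would observe that because both $\C P_1$ and $\C P_2$ consist of intervals, every non-empty intersection $I_i\cap J_j$ is itself an interval, and the collection of these non-empty intersections forms a partition of $\{1,\dots,N\}$ — the common refinement of $\C P_1$ and $\C P_2$. Thus the statement I must prove reduces to: the common refinement contains an interval of length at least $\ell$.

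Next I would bound the number of pieces in the common refinement. Every piece is determined by its left endpoint, and the left endpoint of a piece in the refinement must coincide with the left endpoint of some $I_i$ or some $J_j$. The $I_i$ contribute $m_1$ candidate positions and the $J_j$ contribute $m_2$ positions, but the point $1$ is the left endpoint of both $I_1$ and $J_1$ and is therefore counted twice. Hence the common refinement has at most $m_1+m_2-1$ pieces. (Equivalently, count the distinct endpoints of $\C P_1\cup\C P_2$ inside $\{1,\dots,N\}$.)

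Finally, apply pigeonhole: if every piece $I_i\cap J_j$ of the common refinement had length strictly less than $\ell$, then
\[
N \;=\; \sum_{i,j:\,I_i\cap J_j\neq\emptyset}|I_i\cap J_j| \;<\; \ell\,(m_1+m_2-1) \;<\; \ell\,(m_1+m_2) \;\le\; N,
\]
a contradiction. Therefore some $I_i\cap J_j$ is an interval of length at least $\ell$, which is exactly the desired conclusion. The argument is entirely elementary — the only subtlety is the endpoint count that produces the sharp bound $m_1+m_2-1$, and the assumption $N\ge \ell(m_1+m_2)$ is slightly more than what is needed, giving a comfortable margin for the pigeonhole step.
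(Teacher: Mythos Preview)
Your proof is correct and considerably simpler than the paper's. The paper proceeds by strong induction on $N$: assuming without loss of generality that $I_{m_1}\subseteq J_{m_2}$, it takes the maximal block $K=I_{m_1-k+1}\cdots I_{m_1}$ contained in $J_{m_2}$, runs a three-way case analysis (some $I_{m_1-j}$ in $K$ is long; $J_{m_2}\setminus K$ is long; neither), and in the last case peels off $J_{m_2}$ to reduce to a smaller $N'$ with $m_1'=m_1-k$ and $m_2'=m_2-1$, checking that $N'\ge \ell(m_1'+m_2')$ so the induction hypothesis applies.

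Your route bypasses all of this: you pass directly to the common refinement, bound its number of pieces by $m_1+m_2-1$ via the left-endpoint count, and finish by pigeonhole. This is shorter, avoids induction and case analysis entirely, and in fact shows the conclusion already holds under the weaker hypothesis $N\ge \ell(m_1+m_2-1)$, as you note. The paper's inductive argument does not obviously yield that sharpening. Both approaches are elementary, but yours isolates the combinatorial content more cleanly.
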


\begin{proof}
We will assume that the intervals in $\C P_1$ (resp. $\C P_2$) are ordered in an increasing manner, i.e., $1 \in I_1$ and $I_1 I_2 \cdots I_k$ is an interval for every $1 \leq k \leq m_1$.
Use (strong) induction on $N$.
We may assume without loss of generality that $|I_{m_1}| \leq |J_{m_2}|$ namely that $I_{m_1} \subseteq J_{m_2}$.
Choose $1 \leq k \leq m_1$ maximal with the property that
\[
I_{m_1-k+1} \cdots I_{m_1} \subseteq J_{m_2}.
\]
Set $K=I_{m_1-k+1} \cdots I_{m_1}$ for short.

If the length of one of $I_{m_1-k+1}, \dots ,I_{m_1}$ is at least $\ell$ then we are done.
So we assume that the length of all these $k$ intervals is $<\ell$.

Suppose that $|J_{m_2} \setminus K| \geq \ell$.
Then $K$ is a proper interval in $\{1,\dots,N\}$ and in particular $k \leq m_1-1$.
The maximality of $k$ implies that $I_{m_1-k} \supsetneq J_{m_2} \setminus K$ and in particular $J_{m_2}$ and $I_{m_1-k}$ intersect at an interval of length $\geq \ell$, and we are done again.

So we assume that $|J_{m_2} \setminus K| < \ell$.
This implies that $|J_{m_2}| < (k+1) \ell$.
Since $m_1 \leq k$ and $m_2 \geq 1$,
\[
|J_{m_2}| < (k+1)\ell \leq (m_1+m_2)\ell \leq N.
\]
So $J_{m_2}$ is a proper interval in $\{1,\dots,N\}$, and hence so is $K$.
This implies that $m_2 \geq 2$, that $k \leq m_1-1$.
The maximality of $k$ implies that $I_{m_1-k} \setminus J_{m_2} \neq \emptyset$.
Set $N'=N-|J_{m_2}|$.
Clearly, $N'<<N$.
Set
\begin{align*}
& \C P_1' =\{ I_1, \dots,I_{m_1-k-1}, I_{m_1-k} \setminus J_{m_2} \} \\
& \C P_2' =\{J_1,\dots,J_{m_2-1}\}.
\end{align*}
Both are partitions of $\{1,\dots,N'\}$ consisting of $m_1'=m_1-k$ and $m_2'=m_2-1$ intervals.
Also
\[
N'=N-|J_{m_2}| > 
N - (k+1)\ell \geq 
(m_1+m_2)\ell -(k+1) \ell =
(m_1'+m_2') \ell.
\]
By the induction hypothesis there are $I' \in \C P_1'$ and $J' \in \C P_2'$ which intersect at an interval of length $\geq \ell$.
Hence one of the intervals $I_1,\dots,I_{m_1-k}$ and $J_1,\dots,J_{m_2-1}$ intersect at an interval of length $\geq \ell$.
This completes the induction step.
\end{proof}

\begin{lemma}\label{L:non triviality product Theta}
Consider $\theta_1,\dots,\theta_p \in \Theta$  where $p \geq 1$.
Assume that $\theta_i \neq \theta_{i+1}$ for all $1 \leq i <p$.
Set $\ell=\OP{lcm}(|\theta_1|,\dots,|\theta_p|)$, and for every $1 \leq i \leq p$ set
\[
\kappa_i = \frac{|\theta_i|}{9\ell+|\theta_i| \cdot \tfrac{1+|\theta_i|}{\tau(\theta_i)}}.
\]
Let $n_1,\dots,n_p$ be integers such that $|n_i| >\tfrac{6 \ell}{|\theta_i|}$ for all $1 \leq i \leq p$.
Then
\[
\| \theta_1^{n_1} \cdots \theta_p^{n_p} \|_G >\min_{1 \leq i \leq p} |n_i| \cdot \kappa_i.
\]
\end{lemma}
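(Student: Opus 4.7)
I will argue by contradiction: assume $m := \|w\|_G \le \min_i |n_i|\kappa_i$ and derive a contradiction. Choose a $\mu$-minimal cancellation sequence $\B u$ in $\B w := \theta_1^{n_1}\cdots\theta_p^{n_p}$, so $|\B u|=m$ by Proposition \ref{P:cancellation norm on FA}; write $m_i$ for the number of symbols of $\B u$ in the $i$-th syllable $\theta_i^{n_i}$, so $\sum m_i=m$ and each $m_i\le m\le|n_i|\kappa_i$. Setting $C_i:=|\theta_i|(1+|\theta_i|)/\tau(\theta_i)$, the defining identity $|\theta_i|-C_i\kappa_i=9\ell\kappa_i$ of $\kappa_i$, combined with Lemma \ref{L:small intervals reduction}, yields for each $i$ a reduced word $U_i$, a concatenation of at most $m_i+1$ $\theta_i$-packets, with
\[
|U_i|\;\ge\;|n_i||\theta_i|-C_im_i\;\ge\;9\ell\,|n_i|\kappa_i\;\ge\;9\ell m.
\]
Since $\B w-\B u$ represents the identity in $\Free(A)$, the concatenation $U_1U_2\cdots U_p$ reduces to the empty word.

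The central step is a bound on greedy boundary cancellations: for $1\le i\le p-1$ let $c_i$ be the largest integer such that the last $c_i$ letters of $U_i$ equal the inverse word of the first $c_i$ letters of $U_{i+1}$; I will show $c_i<\ell(m_i+m_{i+1}+2)$. The suffix of $U_i$ of length $c_i$ is partitioned by the $\theta_i$-packet structure of $U_i$ into at most $m_i+1$ subintervals, and the prefix of $U_{i+1}$ of length $c_i$ is partitioned by the $\theta_{i+1}$-packets of $U_{i+1}$ into at most $m_{i+1}+1$ subintervals. Identifying the two length-$c_i$ regions via the inverse matching, if $c_i\ge\ell((m_i+1)+(m_{i+1}+1))$ then Lemma \ref{L:collision of partitions} produces a subinterval of length $\ge\ell$ sitting in a single $\theta_i$-packet of $U_i$ and, via the matching, in a single $\theta_{i+1}$-packet of $U_{i+1}$; since the two restrictions are inverse words and $\ell\ge\OP{lcm}(|\theta_i|,|\theta_{i+1}|)$, Lemma \ref{L:equal packets equal words} forces $\theta_i=\theta_{i+1}$, contradicting the hypothesis. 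Using $m_{i-1}+2m_i+m_{i+1}\le 2m$ (which follows from $\sum_j m_j=m$ and $m_i\le m$), this upgrades to $c_{i-1}+c_i<2\ell(m+2)$ for every $i$, with the convention $c_0=c_p:=0$.

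Finally, the hypothesis $|n_i|>6\ell/|\theta_i|$ together with $\theta_i\ne\theta_{i+1}$ forces $\B w$ itself to be non-trivial in $\Free(A)$: the immediate cancellation at each of its $p-1$ junctures is $<\ell$ by Lemma \ref{L:equal packets equal words}, and since $|\theta_i^{n_i}|>6\ell$ no syllable is cascadingly consumed, so the reduced form has positive length. Hence $m\ge 1$. For any integer $m\ge 1$ one has $9\ell m>2\ell(m+2)$ (equivalently $7m>4$), so $|U_i|>c_{i-1}+c_i$ for every $i$, and the sub-word $U''_i\subseteq U_i$ obtained by deleting its first $c_{i-1}$ and last $c_i$ letters is non-empty. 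By the maximality of $c_i$, the last letter of $U''_i$ is not the inverse of the first letter of $U''_{i+1}$, and each $U''_i$ is reduced as a subinterval of reduced $U_i$; hence $U''_1U''_2\cdots U''_p$ is a fully reduced word. By confluence of free-group reduction it must equal the unique reduced form of $U_1\cdots U_p$, namely the empty word, contradicting $|U''_i|>0$ for all $i$. Thus $\|w\|_G>\min_i|n_i|\kappa_i$, as required. I expect the main obstacle to be the combinatorial juncture estimate in the middle paragraph: correctly applying Lemma \ref{L:collision of partitions} to the two packet-induced partitions of the length-$c_i$ matched region and verifying, via the inverse matching, that the resulting long common subinterval really is a $\theta_i$-packet whose inverse is a $\theta_{i+1}$-packet of length $\ge\ell$, as needed to invoke Lemma \ref{L:equal packets equal words}.
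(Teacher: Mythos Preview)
Your argument is correct and follows essentially the same route as the paper's proof: apply Lemma~\ref{L:small intervals reduction} to each block to get reduced words $U_i$ of length $\ge 9\ell m$ that are concatenations of at most $m_i+1$ $\theta_i$-packets, bound the juncture cancellation $c_i$ via Lemma~\ref{L:collision of partitions} together with Lemma~\ref{L:equal packets equal words}, and conclude that the central portions $U''_i$ are all nonempty and form a reduced word. The only cosmetic differences are that the paper passes through the intermediate bound $|v_i|\le\min(|w_i|/3,|w_{i+1}|/3)+\ell$ before invoking the collision lemma, and handles the $m=0$ case inside the same framework rather than as a separate preliminary step showing $w\ne 1$; your direct bound $c_i<\ell(m_i+m_{i+1}+2)$ is in fact slightly cleaner.
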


\begin{proof}
Let $g$ be the (not necessarily reduced) word $\theta_1^{n_1} \cdots \theta_p^{n_p}$.
Let $u$ be a sequence in $g$ of length 
\[
m \leq \min_{1 \leq i \leq p} |n_i| \cdot \kappa_i.
\] 
We will show that $g - u$ is not the trivial element and by this complete the proof since the norm on $G$ coincides with the cancellation norm.

Partition the sequence $u$ to subsequences $u_i \subseteq \theta_i^{n_i}$ of length $m_i$ each ($1 \leq i \leq p$).
Thus, $m=\sum_i m_i$.
By Lemma \ref{L:small intervals reduction} the removal of $u_i$ from $\theta_i^{n_i}$ yields, after reduction, a reduced word $w_i$ which is a product of at most $m_i+1$ $\theta_i$-packets of total length
\begin{align*}
|w_i| &\geq 
|n_i| \cdot |\theta_i| -m_i \cdot |\theta_i| \cdot \tfrac{1+|\theta_i|}{\tau(\theta_i)} 
\\
& \geq
|n_i| \cdot |\theta_i| - m \cdot |\theta_i| \cdot \tfrac{1+|\theta_i|}{\tau(\theta_i)} && \text{(since $m_i \leq m$)}
\\
& \geq
m \tfrac{|\theta_i|}{\kappa_i} - m \cdot |\theta_i| \cdot \tfrac{1+|\theta_i|}{\tau(\theta_i)} && \text{(since $m \leq \kappa_i |n_i|$)} \\
& =
9\ell m && \text{(by definition of $\kappa_i$)}.
\end{align*}
Then $g - u$ is equivalent to $w_1 \cdots w_p$. 
For any $1 \leq i <p$ we will now examine the reduction of $w_i w_{i+1}$ at the juncture.
This reduction boils down to a word $v_i$ such that $w_i=w_i'v_i$ and $w_{i+1}=v_i^{-1} w'_{i+1}$ and $w_i'w'_{i+1}$ is reduced.
We claim that for every $1 \leq i <p$
\begin{equation}\label{E:bound on length of v}
|v_i| \leq \min\left\{ \tfrac{|w_i|}{3},\tfrac{|w_{i+1}|}{3} \right\}+\ell.
\end{equation}
Assume false, i.e., $|v_i|>\min\left\{ \tfrac{|w_i|}{3},\tfrac{|w_{i+1}|}{3} \right\}+\ell$.
Then $|v_i|> (3m+1)\ell$.

If $m=0$ then $m_i=0$ for all $i$ and then $u$ is empty so $w_i=\theta_i^{n_i}$ for all $i$.
Then $v_i$ is a $\theta_i$-packet and $v_{i}^{-1}$ is a $\theta_{i+1}$ packet of length $>(3m+1)\ell \geq \ell \geq \OP{lcm}(|\theta_i|,|\theta_{i+1}|)$.
Lemma \ref{L:equal packets equal words}  implies that $\theta_i=\theta_{i+1}$ which is a contradiction.
This proves \eqref{E:bound on length of v} if $m=0$.
So we assume that $m\geq 1$.

Given $1 \leq i <p$, recall that $w_i$ is a product of at most $m_i+1$ $\theta_i$-packets and similarly $w_{i+1}$ is a product of at most $m_{i+1}+1$ $\theta_{i+1}$-packets.
Therefore $v_i$ is a product of at most $m_i+1$ $\theta_i$ packets and $v_i^{-1}$ is a product of at most $m_{i+1}$ $\theta_{i+1}$-packets.
If we set $N=|v_i|$, then the partition of $w_i$ into the $\theta_i$-packets gives a partition $\C P_1$ of $\{1,\dots,N\}$ (the letters in $v_i$) into at most $m_i+1$ intervals.
Similarly, the partition of $w_{i+1}$ into the $\theta_{i+1}$-packets gives a partition $\C P_2$ of $\{1,\dots,N\}$ 
into at most $m_{i+1}+1$ intervals.
Then
\[
N = 
|v_i| > 
(3m+1)\ell \geq 
(m+2)\ell =
((m_i+1)+(m_{i+1}+1))\ell.
\]
Lemma \ref{L:collision of partitions} implies that in the process of reduction of $w_iw_{i+1}$ there is a collision of a $\theta_i$-packet in $w_i$ with a $\theta_{i+1}$-packet in $w_{i+1}$ at an interval $v_i'$ in $v_i$ of length at least $\ell \geq \OP{lcm}(|\theta_i|,|\theta_{i+1}|)$.
Lemma \ref{L:equal packets equal words} implies that $\theta_i=\theta_{i+1}$ which is a contradiction. 
This proves \eqref{E:bound on length of v} in the case $m \geq 1$.

Let us set $v_0$ and $v_p$ to be the empty words, which we regard as a prefix of $w_1$ and a suffix of $w_p$.
Clearly \eqref{E:bound on length of v} applies to them as well, where we agree that $w_0$ and $w_{p+1}$ are empty.
Then given $1 \leq i \leq p$
\[
|w_i|-|v_{i-1}|-|v_i| \geq
|w_i| - 2 \tfrac{|w_i|}{3} - 2\ell =
\tfrac{|w_i|}{3} - 2\ell.
\]
If $m=0$ then in particular $m_i=0$, in which case $w_i=\theta_i^{n_i}$, so
\[
\tfrac{|w_i|}{3} - 2\ell  = \tfrac{|n_i| \cdot |\theta_i|}{3} - 2\ell > \tfrac{6\ell}{3}-2\ell =0.
\]
If $m>0$ then 
\[
\tfrac{|w_i|}{3} - 2\ell \geq \tfrac{9\ell m}{3}-2\ell \geq \ell >0.
\]
This proves that $v_{i-1}^{-1}$ and $v_i$ are disjoint intervals in $w_i$ for all $1 \leq i \leq p$, so $w_i=v_{i-1}^{-1}\widetilde{w_i}v_i$ for non empty subword $\widetilde{w_i}$.
It follows that the reductions at the junctures of $w_1 \cdots w_p$ do not affect each other so the reduced form of $w_1\cdots w_p$ is $\widetilde{w_1}\cdots \widetilde{w_p}$ which is non-trivial since $\widetilde{w_i}$ are not empty and reduced, and there may be no reductions at the junctures.
This completes the proof.
\end{proof}

\begin{proof}[Proof of Theorem \ref{theorem:F(Theta) to tC(Free)}]
Let $\alpha=\overline{\theta_1}(r_1) \cdots \overline{\theta_p}(r_p)$ be an non-trivial element of $\Free_\RR(\Theta)$ represented in reduced form, i.e., $\theta_1,\dots,\theta_p \in \Theta$ where $p>0$, and $r_i \neq 0$ and $\theta_i \neq \theta_{i+1}$.
We will show that $\|\hat{\pi}(\iota(\alpha))\|_{\tC(G)}>0$.

Set $[g]=\hat{\pi}(\iota(\alpha))$.
By definition it is represented by the function 
\[
g(t) = \theta_1^{\Myfloor{r_1t}}  \cdots \theta_p^{\Myfloor{r_pt}}, \qquad (g \colon (0,\infty) \to G).
\]
Set $r=\min\{|r_1|,\dots,|r_p|\}$.
Clearly $r>0$.
Set $\ell=\OP{lcm}(|\theta_1|,\dots,|\theta_p|)$.
Clearly, $|\Myfloor{r_it}| \cdot |\theta_i| > 6\ell$ for all sufficiently large $t$. 
Let $\kappa_i$ be defined as in Lemma \ref{L:non triviality product Theta}.
Clearly $\kappa_i >0$ for all $i$, so $\kappa := \min\{ \kappa_1,\dots,\kappa_p\} >0$.
It follows from that lemma that 
\[
\limsup_{t\to \infty} \tfrac{1}{t} \|g(t)\| 
\geq \limsup_{t \to \infty} \min_{1 \leq i \leq p} \left|\tfrac{\Myfloor{r_it}}{t}\right| \cdot \kappa_i 
\geq \limsup_{t \to \infty} \tfrac{| \, \Myfloor{rt}\,|}{t} \kappa = |r| \kappa >0.
\]
Now apply Lemma \ref{L:limsup<norm} to deduce that $[g]$ is not the trivial element in $\tC(G)$.
Therefore $\OP{Ker}(\hat{\pi} \circ \iota|_{\Free_\RR(\Theta)})$ is trivial.

Set $\xi_\Theta=\hat{\pi} \circ \iota|_{\Free_\RR(\Theta)}$.
By Proposition \ref{P:pure roots} $\Theta \subseteq G$ satisfies the conditions of Proposition \ref{prop:quotient of tau}.
In its notation $\xi_\Theta$ is the composition $\hat{\pi} \circ \iota \circ \OP{id}^\tau_\mu \circ \incl$ and it follows (since $\tC(G)$ is complete) that $\xi_\Theta$ is Lipschitz with constant $1$, has dense image and $\widehat{\xi_\Theta} =\hat{\xi}_\Theta$ is a metric quotient homomorphism.
If $\alpha=\overline{\theta}(r) \in \Free_\RR(\Theta)$ is a generator then $\|\xi_\Theta(\alpha)\| =\| \hat{\pi}(\iota(\alpha))\| = |r| \cdot \tau(\theta) = \|\overline{\theta}(r)\|$ by Lemma \ref{L:pihat powers} and Corollary \ref{cor:Cnorm image of generators in FRG}.
\end{proof}

\appendix

\section{Proofs for Section \ref{sec:conjugation-invariant norms}}\label{app:cin proofs}

\begin{proof}[Proof of Lemma \ref{L:conj invariant consequences}]
Induction on $n$.
Set $g=g_1\dots g_{n}$ and $g'=g_1\dots g_{n-1}$ and $h=h_1\dots h_{n}$ and $h'=h_1\dots h_{n-1}$.
Then $\| g^{-1}h\| = \|g_n^{-1} g'{}^{-1} h' h_n\| \leq \|g_n^{-1} h_n\| + \|h_n^{-1} g'{}^{-1} h' h_n\| = \|g_n^{-1} h_n\| + \| g'{}^{-1} h' \| \leq \sum_{i=1}^n \|g_i^{-1} h_i\|$.
\end{proof}

\begin{proof}[Proof of Lemma \ref{L:quotient pseudonorm}]
Surjectivity of $\pi$ implies that $\| \ \|_H$ is well defined.
The symmetry $\|h\|_H=\|h^{-1}\|_H$ follows from the symmetry of $\| \ \|_G$ and since $\pi^{-1}(h^{-1})=(\pi^{-1}(h))^{-1}$.
Consider $h_1,h_2 \in H$.
Given $\epsilon>0$, choose some $g_1 \in \pi^{-1}(h_1)$ such that $\|g_1\| < \|h_1\|_H +\epsilon$.
Since $\pi^{-1}(h_1h_2) = g_1 \cdot \pi^{-1}(h_2)$, we get that 
\begin{align*}
\| h_1 h_2\|_{H} &= 
\inf \left\{ \|g\|_G : g \in \pi^{-1}(h_1h_2)\right\}\\
&=
\inf \left\{ \| g_1 x\|_G :  x \in \pi^{-1}(h_2)\right\} \\
&\leq
\|g_1\|_G + \inf \left\{ \| x\|_G :  x \in \pi^{-1}(h_2)\right\} \\
&<
\|h_1\|_H + \|h_2\|_H + \epsilon.
\end{align*}
Since $\epsilon>0$ is arbitrary the triangle inequality follows.

The conjugation invariance of $\| \ \|_H$ follows from that of $\| \ \|_G$ similarly by observing that $\pi^{-1}(hxh^{-1})=g \pi^{-1}(x) g^{-1}$ for any choice of $g \in \pi^{-1}(h)$.
By definition of the norm, $\| \pi(g)\|_H \leq \| g\|_G$ for any $g \in G$ so $\pi$ is Lipschitz with constant $1$.
\end{proof}

\begin{proof}[Proof of Proposition \ref{P:quotient complete}]
Let $h_1,h_2,\dots$ be a Cauchy sequence in $H$.
Since $(h_n)$ is convergent if and only if  it contains a convergent subsequence, by passage to a subsequence if necessary we may assume that $\|h_k^{-1}h_n\|_H < \tfrac{1}{2^k}$ for every $k \leq n$.
For every $k \geq 1$ denote $\delta_k=h_k^{-1}h_{k+1}$.
Thus, $\|\delta_k\|_H < \tfrac{1}{2^k}$.

Lift $h_1$ arbitrarily to some $g_1 \in G$.
By construction of $\| \ \|_H$, every $\delta_k$ lifts to some $\Delta_k \in G$ such that $\|\Delta_k\|_G < \tfrac{1}{2^k}$.
For every $k \geq 1$ set $g_k=g_1 \cdot \Delta_1 \cdots \Delta_{k-1}$.
Since $\pi$ is a homomorphism 
\[
\pi(g_k)=\pi(g_1) \pi(\Delta_1) \cdots \pi(\Delta_{k-1}) = h_1 \delta_1 \cdots \delta_{k-1} = h_k
\]
for all $k \geq 1$.
Consider some $m \geq 1$.
For any $k \geq m$
\[
\| g_m^{-1}g_k\|_G = \|\Delta_m \cdots \Delta_{k-1}\|_G < \sum_{i=m}^{k-1} \tfrac{1}{2^i} < \tfrac{1}{2^{m-1}} 
\]
which is arbitrarily small for sufficiently large $m$.
It follows that $(g_n)$ is a Cauchy sequence in $G$, and by hypothesis it converges to some $g \in G$.
Since $\pi$ is Lipschitz, the sequence $h_n=\pi(g_n)$ converges to $\pi(g)$.
\end{proof}

\begin{proof}[Proof of Proposition \ref{P:quotient Lipschitz maps}]
We denote the norms by $\| \ \|_G$, $\| \ \|_H$, $\| \ \|_{G'}$ and $\| \ \|_{H'}$. 
Let $C>0$ be the Lipschitz constant of $\varphi$.
Given $g' \in G'$, for any $\epsilon>0$ choose $g \in G$ such that $\pi_G(g)=g'$ and $\|g\|_G<\|g'\|_{G'}+\tfrac{\epsilon}{C}$.
Since $\pi_H$ is Lipschitz with constant $1$,
\begin{multline*}
\| \varphi'(g')\|_{H'} = \| \varphi'(\pi_G(g)) \|_{H'} = \| \pi_H(\varphi(g))\|_{H'} 
 \\
\leq 
\| \varphi(g)\|_H \leq C \| g\|_G < C\|g'\|_{G'}+\epsilon.
\end{multline*}
This completes the proof since $\epsilon$ is arbitrary.
\end{proof}

\begin{proof}[Proof of Proposition \ref{P:norm-mu universally Lipschitz}]
Choose $g \in G$.
For any presentation $g=u_1\cdots u_n$, where each $u_i$ is conjugate to some $a_i^{\pm 1}$ we get that
\[
\|f(g)\|_H \leq \sum_i \|f(u_i)\|_H = \sum_i \|f(a_i^{\pm 1})\|_H  
= \sum_i \|f(a_i)\|_H \leq C \sum_i \mu(a_i).
\]
This shows that $\|f(g)\|_H \leq C\|g\|_{G;\mu}$.
\end{proof}

\begin{proof}[Proof of Proposition \ref{prop:finite generating sets bi lipschitz equivalence}]
Since the roles of $A$ and $A'$ are interchangeable, it suffices to show that $\OP{id} \colon (G,\|\ \|_A) \to (G,\| \ |_{A'})$ is  Lipschitz.
The norms take integer values only so we may set $C= \max \{ \| a\|_{A'} : a \in A\}$.
Then $\|\OP{id}(a)\|_{A'} \leq C = C \cdot \|a\|_A$.
The result follows from Proposition \ref{P:norm-mu universally Lipschitz}.
\end{proof}

\begin{proof}[Proof of Proposition \ref{prop:quotient of canonical word norms}]
Let $\| \ \|_H$ denote the quotient pseudonorm on $H$ defined in Lemma \ref{L:quotient pseudonorm}.
Choose some $h \in H$.
We will show that $\| h \|_H=\| h\|_B$ and by this complete the proof.
By the description of $\| \ \|_A$ and $\| \ \|_B$ in Definition \ref{defn:standard word norm} and since $B=\pi(A)$ it follows that $\|h\|_B \leq \|g\|_A$ for any $g \in \pi^{-1}(h)$, hence $\|h\|_B \leq \|h\|_H$.
Conversely, by the same description it is clear that if $\|h\|_B=n$ then there exists $g \in \pi^{-1}(h)$ such that $\| g\|_A \leq n$ and therefore $\|h\|_H \leq n = \|h\|_B$.
\end{proof}

\section{Proofs of the results in Section \ref{sec:FSA}}\label{app:proof of FSA}

\begin{lemma}\label{lem:can WAS triangle}
Let $\B w_0, \B w_1, \B w_2$ be words in the alphabet $A(S)$.
Then
\[
\| \B w_1 \B w_0 \B w_2\|_{\can} \leq \| \B w_1 \B w_2\|_{\can} + \| \B w_0\|_{\can}
\]
\end{lemma}

\begin{proof} 
Any cancellation sequences $\B u_0$ and $\B u_1 \B u_2$ in $\B w_0$ and $\B w_1 \B w_2$ give rise to a cancellation sequence $\B u = \B u_1 \B u_0 \B u_2$ in $\B w_1 \B w_0 \B w_2$.
The result follows from the definition since $\mu_S(\B u_1\B u_2) + \mu_S(\B u_0)= \mu_S(\B u) \geq \| \B w_1 \B w_0 \B w_2 \|_{\can}$.
\end{proof}

\begin{proof}[Proof of Lemma \ref{lem:minimal cancellation seq in syllable}]
Let $\B v =a(t_1) \cdots a(t_n)$ where $t_i \in \myinterval{0,s_i}_S$  be an arbitrary cancellation sequence in $\B w$.
By definition, $\B w - \B v = \prod_i a(s_i-t_i)$ represents the trivial element, so by Lemma \ref{lem:syllable basic 1} $\sum_i s_i-t_i=0$, hence $\sum_i s_i = \sum_i t_i$.
Then by definition
\[
\mu_S(\B v) = \sum_i |t_i| \mu(a) = \left(\sum_i |t_i|\right) \mu(a) 
\geq \left|\sum_i t_i\right| \cdot \mu(a) = \left|\sum_i s_i\right| \cdot \mu(a).
\]
Since $\B v$ was arbitrary, this shows that
\[
\| \B w\|_{\can} \geq |s| \cdot \mu(a).
\]
We now construct a cancellation sequence $\B u=a(t_1) \cdots a(t_n)$ in $\B w$ such that $\mu_S(\B u)=|s| \cdot \mu(a)$ and by this complete the proof.
If $s =0$ then $\B w$ represents the trivial word so choosing $t_i=0$ for all $i$ does the trick.
Suppose that $s > 0$.
Set $I_+=\{i : s_i >0\}$. 
Clearly
\[
0 < s = \sum_i s_i   \leq \sum_{i \in I_+} s_i.
\]
It is then clear that we can choose $0 \leq t_i \leq s_i$ for all $i \in I_+$ such that $t_i \in S$ and $\sum_i t_i =s$.
We choose $t_i=0$ if $i \notin I_+$.
Then
\[
\B w - \B u = \prod_{i \in I_+} a(s_i-t_i) \cdot \prod_{i \notin I_+}  a(s_i)
\]
represents the trivial element since $\sum_{i \in I_+} (s_i-t_i) + \sum_{i \notin I_+} s_i = 0$ and also
\[
\mu_S(\B u) = \sum_{i \in I_+} |t_i| \cdot \mu(a) =  \left(\sum_{i \in I_+} t_i\right) \cdot \mu(a) = s \cdot \mu(a) = |s| \cdot \mu(a).
\]
Hence $\|\B w\|_{\can} = |s| \cdot \mu(a) = \mu_S(\B u)$, as needed.
The case $s<0$ is similar.
\end{proof}

\begin{lemma}\label{lem:cancel entire syllable}
Let $\B w=\B w_1 \cdots \B w_n$, where $n>0$, be a word in $W(A(S))$ written in syllabic form.
If $\B u$ is a cancellation sequence in $\B w$ with corresponding syllabic form $\B u_1 \cdots \B u_n$, then there exists $1 \leq k \leq n$ such that $\B w_k - \B u_k$ is trivial in $\Free_S(A)$.
Moreover, by the removal of the $k$th syllable from both $\B w$ and $\B u$,
\[
\B u' = \B u_1 \cdots \widehat{\B u_k} \cdots \B u_n
\]
is a cancellation sequence in $\B w' = \B w_1 \cdots \widehat{\B w_k} \cdots \B w_n$.
\end{lemma}

\begin{proof}
Since $\B w_i$ are syllables of $A$-type $a_i$, by Lemma \ref{lem:syllable basic 1} there exist $r_i \in S$ such that in $\Free_S(A)$
\[
\B w - \B u = (\B w_1-\B u_1) \cdots (\B w_n- \B u_n) = a_i(r_i) \cdots a_n(r_n).
\]
Since $a_i \neq a_{i+1}$ for all $i$, if $r_i \neq 0$ for all $i$ then this is a non-trivial reduced word, contradiction to the assumption that $\B u$ is a cancellation sequence.
So $r_k=0$ for some $k$, hence $\B w_k-\B u_k$ is trivial in $\Free_S(A)$.

Finally, since $\B w_k - \B u_k$ is trivial in $\Free_S(A)$, then $\B w - \B u$ and 
\[
(\B w_1-\B u_1) \cdots \widehat{(\B w_k- \B u_k)} \cdots (\B w_n- \B u_n)
\]
are equal in $\Free_S(A)$.
But $\B w - \B u$ is trivial,  so $\B u'$ is a cancellation sequence in $\B w'$.
\end{proof}

\begin{proof}[Proof of Proposition \ref{prop:can-WAS minimum cancellation sequence}]
Write $\B w = \B w_1 \cdots \B w_n$ in syllabic form. 
We prove the result by induction on $n$.
The case $n=0$ is trivial since $\B w$ is the empty word.
The case $n=1$ is covered by Lemma \ref{lem:minimal cancellation seq in syllable}.
So we assume that $n \geq 2$ and the result holds for words with less than $n$ syllables.
By definition of $\| \B w\|_{\can}$ 
there exists a sequence $\B u^{(m)}$ of cancellation sequences in $\B w$ such that
\[
\lim_{m \to \infty} \mu_S(\B u^{(m)}) = \| \B w\|_{\can}.
\]
We can write them in syllabic form
\[
\B u^{(m)} = \B u^{(m)}_1 \cdots \B u^{(m)}_n
\]
compatible with the syllable form of $\B w$.
By Lemma \ref{lem:cancel entire syllable} for every $m \geq 1$ there exists $1 \leq k_m \leq n$ such that $\B w_{k_m} - \B u^{(m)}_{k_m}$ is trivial.
By passage to a subsequence we may assume that there exists $1 \leq k \leq n$ such that $\B w_k - \B u^{(m)}_k$ is trivial for all $m \geq 1$.

We write $\hat{\B w} = \B w_1 \cdots \widehat{\B w_k} \cdots \B w_n$ and $\hat{\B u}^{(m)} = \B u_1^{(m)} \cdots \widehat{\B u_k^{(m)}} \cdots \B u_n^{(m)}$.
By Lemma \ref{lem:minimal cancellation seq in syllable} $\B w_k$ has a minimal cancellation sequence $\B v_k$.
Since $\hat{\B w}$ has less than $n$ syllables (it may have $n-1$ or $n-2$ or $n-3$ syllables), by the induction hypothesis it admits a minimal cancellation sequence $\hat{\B v}$.
Clearly, $\B v_k$ and $\hat{\B v}$ can be combined to form a cancellation sequence $\B v$ in $\B w$.
The minimality of $\B v_k$ and $\hat{\B v}$ and Lemma \ref{lem:can WAS triangle} implies that 
\[
\| \B w\|_{\can}
\leq 
\| \hat{\B  w}\|_{\can} +  \| \B w_k\|_{\can} 
=
\mu_S(\hat{\B v})+\mu_S(\B v_k)
\leq
\mu_S(\hat{\B u}^{(m)})+\mu_S(\B u_k^{(m)})
\xto{m \to \infty}
\| \B w \|_{\can}.
\]
Hence, the first inequality is an equality and therefore $\mu_S(\B v) = \mu_S(\hat{\B v})+\mu_S(\B v_k) = \| \B w \|_{\can}$.
This completes the proof.
\end{proof}

\begin{lemma}\label{lem:can norm WAS splitting} 
Let $\B w = \B w_1 \cdots \B w_n$ in $W(A(S))$ be in syllabic form. 
Then there exists some $1 \leq k \leq n$ such that 
\[
\| \B w \|_{\can} =  \| \B w_k\|_{\can} +  \| \B w_1 \cdots \widehat{\B w_k} \cdots \B w_n\|_{\can}
\]
\end{lemma}

\begin{proof}
By Proposition \ref{prop:can-WAS minimum cancellation sequence} there is a minimal cancellation sequence $\B u = \B u_1 \cdots \B u_n$ in $\B w$.
By Lemma \ref{lem:cancel entire syllable} there exists $k$ such that $\B w_k - \B u_k$ is trivial and $\B u' = \B u_1 \cdots \widehat{\B u_k} \cdots \B u_n$ is a cancellation sequence in $\B w' = \B w_1 \cdots \widehat{\B w_k} \cdots \B w_n$.
By Lemma \ref{lem:can WAS triangle}
\[
\| \B w \|_{\can}
\leq 
\| \B w' \|_{\can} + \| \B w_k \|_{\can}
\leq
\mu_S(\B u')+\mu_S(\B u_k)
= \mu_S(\B u)
= \| \B w\|_{\can}.
\]
The result follows.
\end{proof}

For a syllable $\B w = a(s_1) \cdots a(s_n)$ we write $\OP{tot}(\B w)=\sum_i s_i$.
It is clear that $\B w = a(s)$ in $\Free_S(A)$.

\begin{proof}[Proof of Proposition \ref{prop:can-norm invariant representatives}]
We need to check invariance of $\| \ \|_{\can}$ under the relations in \eqref{eqn:presentation of FSA} in the presentation of $\Free_S(A)$ in \ref{V:presentation FSA}.
Hence, we need to show that $\| \B w\|_{\can}=\| \B w'\|_{\can}$  in  the two cases where
\begin{enumerate}[label=(\roman*)]
\item\label{can-norm invariant:i}
$\B w = \B w_1 \B w_2$ and $\B w' = \B w_1 \cdot a(0) \cdot \B w_2$ for some words $\B w_1, \B w_2$ and $a \in A$.

\item\label{can-norm invariant:ii}
$\B w = \B w_1 \cdots \B w_n$ and $\B w' = \B w'_1 \cdots \B w'_n$ are in syllabic forms, the $A$-types of $\B w_i$ and $\B w'_i$ are the same and $\OP{tot}(\B w_i)=\OP{tot}(\B w'_i)$ for all $1 \leq i \leq n$.
\end{enumerate}
Case \ref{can-norm invariant:i}:
Observe that if $\B u = \B u_1 \B u_2$ is a cancellation sequence in $\B w$ where $\B u_1, \B u_2$ are sequences in $\B w_1,\B w_2$ then $\B u'=\B u_1 a(0) \B u_2$ is a cancellation sequence in $\B w'$ and $\mu(\B u)=\mu(\B u')$ since $\mu(a(0))=0 \cdot \mu(a)=0$.
Similarly, since $\myinterval{0,0}=\{0\}$, any cancellation sequence $\B u'$ in $\B w'$ has the form $\B u'=\B u_1 a(0) \B u_2$ for sequences $\B u_1, \B u_2$ in $\B w_1, \B w_2$, and moreover since $a(0)$ is trivial in $\Free_S(A)$, it follows that $\B u = \B u_1 \B u_2$ is a cancellation sequence in $\B w$ and $\mu(\B u')=\mu(\B u)$.
It follows from the definition of $\| \ \|_{\can}$ that $\|\B w\|_{\can} = \| \B w'\|_{\can}$.

Case \ref{can-norm invariant:ii}:
We will use induction on the number of syllables $n$, the case $n=0$ is trivial and $n=1$ follows from Lemma \ref{lem:minimal cancellation seq in syllable}.
By Lemma \ref{lem:can norm WAS splitting} there exists some $1 \leq k \leq n$ such that
\[
\| \B w\|_{\can} = \| \B w_k\|_{\can} + \| \hat{\B w} \|_{\can}
\]
where $\hat{\B w} = \B w_1 \cdots \widehat{\B w_k} \cdot \B w_n$.
Since $\hat{\B w}$ and $\hat{\B w}'$ have  less that $n$ syllables (they have $n-1$ or $n-2$ or $n-3$ syllables), and since they satisfy \ref{can-norm invariant:ii}, the induction hypothesis implies that $\| \hat{\B w}\|_{\can} = \| \hat{\B w}'\|_{\can}$.
Similarly, $\B w_k\|_{\can} = \| \B w_k'\|_{\can}$.
By Lemma \ref{lem:can WAS triangle}
\[
\| \B w'\|_{\can} 
\leq
\| \hat{\B w}'\|_{\can} +\| \B w'_k \|_{\can}
=
\| \hat{\B w}\|_{\can} +\| \B w_k \|_{\can}
=
\| \B w\|_{\can}.
\]
By symmetry $\|\B w\|_{\can} \leq \| \B w'\|_{\can}$, and equality holds.
\end{proof}

Given a group $G$ and $g \in G$ we will write $h=g^\bullet$ to denote that $h$ is a conjugate of $g$.
The proof of the next lemma is left to the reader.

\begin{lemma}\label{lem:bullets 1}
Let $g_1, \dots, g_n$ and $h_1, \dots, h_n$ be elements in a group $G$.
If $g_1 \cdots g_n =1$ then 
\[
g_1h_1g_2 h_2 \cdots g_n h_n = h_1^\bullet \cdots h_n^\bullet.
\]
\end{lemma}

\begin{proof}[Proof of Theorem \ref{thm:FSA cancellation=CIWN}]
Consider some $g \in \Free_S(A)$.
Represent it by some word
\[
\B w = a_1(s_1) \cdots a_n(s_n)
\]
in $W(A(S))$.
Let $\B u= a_1(t_1) \cdots a_n(t_n)$ be a minimal cancellation sequence (Proposition \ref{prop:can-WAS minimum cancellation sequence}).
By definition $\B w - \B u = \prod_i a_i(s_i-t_i)$ is trivial in $\Free_S(A)$, so Lemma \ref{lem:bullets 1} implies the following equality in $\Free_S(A)$.
\[
\B w = 
a_1(s_1-t_1) a_1(t_1) a_2(s_2-t_2) a_2(t_2) \cdots a_n(s_n-t_n) a_n(t_n) 
= 
a_1(t_1)^\bullet \cdots a_n(t_n)^\bullet.
\]
Since $a_i(t_i) \in A(S)$ are generators of $\Free_S(A)$, by the definition of the norm $\| \ \|_{\Free_S(A);\mu_S}$ and of $\| \B w\|_{\can}$
\[
\| g \|_{\Free_S(A);\mu_S} \leq \sum_i \mu_S(a_i(t_i)) = \mu_S(\B u) = \| \B w\|_{\can} = \| g\|_{\can}.
\]
For the reverse inequality choose an arbitrary $\varepsilon>0$.
By the definition of $\| g\|_{\Free_S(A);\mu}$ there are $a_1(s_1), \dots , a_n(s_n) \in A(S)$ and $h_1 ,\dots, h_n \in \Free_S(A)$ such that
\[
g= \prod_i a_i(s_i)^{h_i}
\]
and such that
\[
\sum_i \mu(a_i(s_i)) < \| g\|_{\Free_S(A);\mu} + \varepsilon.
\]
For every $1 \leq i \leq n$ choose $\B u_i \in W(A(S))$ which represent $h_i$.
Let $\B u_i^{-1}$ denote the obvious word which represents $h_i^{-1}$.
Then the word
\[
\B w = \B u_1^{-1} \cdot a_1(s_1) \cdot \B u_1 \cdot \B u_2^{-1} \cdot a_2(s_2) \cdot \B u_2 \cdots \B u_n^{-1} \cdot a_n(s_n) \cdot \B u_n 
\]
represents $\prod_i a_i(s_i)^{h_i}$, i.e., it represents $g$.
It is clear that the letters $a_i(s_i)$ in $\B w$ form a cancellation sequence.
It follows that
\[
\| g\|_{\can} = \| \B w\|_{\can} \leq \sum_i \mu_S(a_i(s_i)) < \|g\|_{\Free_S(A);\mu} + \varepsilon.
\]
Since $\varepsilon>0$ is arbitrary, $\| g\|_{\can} \leq \|g\|_{\Free_S(A);\mu}$ and hence equality holds.
\end{proof}

\begin{proof}[Proof of Proposition \ref{prop:mu>0 => norm}]
If $\mu(a)=0$ for some $a \in A$ then clearly $a(1) \in \Free_S(A)$ has $\| a(1)\|_{\can} = \mu(a)=0$, showing that $\| \ \|_{\Free_S(A);\mu}$ is not a norm.
Conversely, if $\B w=a_1(s_1) \cdots a_n(s_n)$ is a word which represents a non-trivial element in $\Free_S(A)$ then any cancellation sequence $\B u=a_1(t_1) \cdots a_n(t_n)$, in particular a minimal one, must have $t_k \neq 0$ for some $k$, whence $\| \B w\| = \mu(\B u)\geq |t_k| \cdot \mu(a_k)>0$.
\end{proof}

\begin{proof}[Proof of Proposition \ref{prop:restriction norms FSA<FRA}]
Consider some $g \in \Free_S(A)$.
Since $A(S) \subseteq A(\RR)$ and $\mu_S \colon A(S) \to [0,\infty)$ is the restriction of $\mu_\RR \colon A(\RR) \to [0,\infty)$, it follows from the definitions, see \ref{V:word norms}, that 
\[
\|g \|_{\Free_\RR(A);\mu_\RR} \leq \| g\|_{\Free_S(A);\mu_S}.
\]
To prove the reverse inequality we use the description as cancellation norms, Theorem \ref{thm:FSA cancellation=CIWN}.
We will show that
\[
\| \B w\|_{\can;S;\mu} \leq \| \B w\|_{\can;\RR;\mu} 
\]
for every $\B w \in W(A(S))$.
Write $\B w = \B w_1 \cdots \B w_n$ in syllabic form.
We prove the inequality by induction on $n$.
The case $n=0$ is trivial and $n=1$ is covered by Lemma \ref{lem:minimal cancellation seq in syllable}.
By Lemma \ref{lem:can norm WAS splitting} there exists $1 \leq k \leq n$ such that
\[
\| \B w\|_{\can;\RR;\mu} =  \| \B w_k\|_{\can;\RR;\mu} + \| \B w'\|_{\can;\RR;\mu}
\]
where $\B w' = \B w_1 \cdots \widehat{\B w_k} \cdots \B w_n$.
The induction hypothesis applies to $\B w_k$ and $\B w'$ and together with Lemma  \ref{lem:can WAS triangle}
\[
\| \B w\|_{\can;S;\mu} \leq \| \B w_k\|_{\can;S;\mu} + \| \B w'\|_{\can;S;\mu}
=
\| \B w_k\|_{\can;\RR;\mu} + \| \B w'\|_{\can;\RR;\mu}
=
\| \B w\|_{\can;\RR;\mu}.
\]
\end{proof}

\begin{proof}[Proof of Proposition \ref{P:geodesics in F_R(A)}]
We write $\| \ \|$ for the norm of $\Free_\RR(A)$.
Since the metric on $\Free_\RR(A)$ is invariant with respect to translations, it suffices to find for every $w \in \Free_\RR(A)$ a geodesic to the identity element.
Let $\B w=a_1(r_1) \cdots a_n(r_n)$ be a word in $W(A(\RR))$ representing $w$.
In the remainder of the proof we will freely regard words as the elements they represent in $\Free_\RR(S)$.

Let $\B u=a_1(s_1) \cdots a_n(s_n)$ be a minimal cancellation sequence in $\B w$, see Proposition \ref{prop:can-WAS minimum cancellation sequence}.
Define $\alpha \colon [0,1] \to F_\RR(A)$ by
\[
\alpha(t) = a_1(r_1-s_1t) \cdots a_n(r_n-s_nt).
\]
It is clear that $\alpha(0)=w$ and $\alpha(1)=1$ because $\B w - \B u$ represents the trivial element.
Consider some $0 \leq t' < t \leq 1$.
Then $\alpha(t)=\prod_i a_i(r_i-ts_i)$ and $\alpha(t')=\prod_i a_i(r_i-ts'_i)$.
By Lemmas \ref{L:conj invariant consequences} and \ref{lem:minimal cancellation seq in syllable}
\[
\|\alpha(t)^{-1}\alpha(t')\| \leq \sum_i \|a_i(t's_i-ts_i)\| = |t-t'| \sum_i \mu(a_i)|s_i| = |t-t'| \cdot \mu(\B u) = |t-t'| \cdot \|w\|.
\]
In particular, given $0 \leq t_1 \leq t_2 \leq 1$ we get
\begin{align*}
\|w\| &= \|\alpha(0) \alpha(1)^{-1}\| \\
&\leq \|\alpha(0) \alpha(t_1)^{-1}\| + \|\alpha(t_1)\alpha(t_2)^{-1}\|+\|\alpha(t_2)\alpha(1)^{-1}\| \\
&\leq t_1 \|w\| + (t_2-t_1)\|w\|+(1-t_2)\|w\|\\
& =\|w\|.
\end{align*}
Hence, equality holds everywhere and, in particular, 
\[
\|\alpha(t_1)\alpha(t_2)^{-1}\|=|t_2-t_1| \cdot \|w\|.
\]
It follows that $\alpha$ is a geodesic.
\end{proof}

\begin{proof}[Proof of Proposition \ref{prop:FSA dense FRA}]
Given $g \in \Free_\RR(A)$ and $\varepsilon>0$ choose  $\B w = a_1(r_1) \cdots a_n(r_n)$ in $W(A(\RR))$ which represents $g$.
Since $S$ is dense, there are $s_i \in S$ such that 
\[
\sum_i |s_i - r_i| \cdot \mu(a_i) < \varepsilon.
\]
Consider the word  $\B  w' = a_1(s_1) \cdots a_n(s_n)$ in $W(A(S))$.
It represents an element $g' \in \Free_S(A)$ and by Lemma \ref{L:conj invariant consequences}
\begin{multline*}
\| \B w^{-1} \B w' \|_{\Free_\RR(A);\mu} \leq \sum_i \| a_i(r_i)^{-1} a_i(s_i)\|_{\Free_\RR(A);\mu} 
\\
=
\sum_i \| a_i(s_i-r_i)\|_{\Free_\RR(A);\mu} 
=
\sum_i |s_i-r_i| \mu(a_i) < \varepsilon.
\end{multline*}
We have shown that there exists $g' \in \Free_S(A)$ at distance $<\varepsilon$ from $g$ and we are done.
\end{proof}

\begin{proof}[Proof of Proposition \ref{P:naturality FRA} ]
For any $a(s) \in A(S)$, 
 \begin{multline*}
\|f_*(a(s))\|_{\Free_S(B);\mu_B} =
\|f(a)(s)\|_{\Free_\RR(B);\mu_B} =
\mu_{B,S}(f(a)(s)) 
\\
= 
|s| \cdot \mu_B(f(a)) \leq 
C |s| \mu_A(a) = C \cdot \mu_{A,S}(a(s)).
\end{multline*}
The result follows from Proposition \ref{P:norm-mu universally Lipschitz}.
\end{proof}

\begin{proof}[Proof of Lemma \ref{L:psit FRA properties}]
\ref{L:psit FRA properties:multiplicative}
We will write $\| \ \|$ for the norm $\| \ \|_{\Free_\RR(A);\mu_\RR}$ on $\Free_\RR(A)$.
For any $a(r) \in A(\RR)$,
\[
\| \psi_t(a(r))\| =
\| a(tr)\| \leq 
|tr| \mu(a) =
|t| \mu_\RR(a(r)).
\] 
Proposition \ref{P:norm-mu universally Lipschitz} shows that $\psi_t$ is Lipschitz with constant $|t|$.

If $t=0$ then $\psi_t$ is trivial so $\| \psi_t(w)\|=t\|w\|$ trivially.
So we assume $t \neq 0$.
By Proposition \ref{P:psit monoidal properties} $\psi_{1/t} \circ \psi_t$ is the identity.
Since $\psi_{1/t}$ is Lipschitz with constant $\tfrac{1}{|t|}$, for any $w \in\Free_\RR(A)$
\[
\|w\| = \| \psi_{1/t}(\psi_t(w))\| \leq \tfrac{1}{|t|} \| \psi_t(w)\| \leq \tfrac{1}{|t|}|t| \cdot \|w\| = \| w\|.
\]
Therefore the inequalities are equalities and in particular $\|\psi_t(w)\|=|t| \cdot \|w\|$.

\smallskip
\noindent
\ref{L:psit FRA properties:difference}
Let $\B w=a_1(r_1) \cdots a_k(r_k)$ be a word representing $w$.
Set $C_w=\sum_i |r_i| \mu(a_i)$.
Notice that $\psi_t(w)$ is represented by $\prod_i a_i(tr_i)$ and $\psi_s(w)$ by $\prod_i a_i(sr_i)$.
Lemma \ref{L:conj invariant consequences} implies
\begin{multline*}
\| \psi_t(w)^{-1} \psi_s(w)\| \leq \sum_i \| a_i(tr_i)^{-1}a_i(sr_i)\| =
 \\
\sum_i \| a_i(sr_i-tr_i)\| \leq 
\sum_i |sr_1-tr_i| \mu(a_i) = C_w|s-t|.
\end{multline*}
\end{proof}

\bibliography{bibliography}
\bibliographystyle{plain}

\end{document}